\newcommand{\R}{\mathbb{R}}
\newcommand{\C}{\mathbb{C}}
\newcommand{\N}{\mathbb{N}}
\newcommand{\Z}{\mathbb{Z}}
\newcommand{\E}{\mathbb{E}}
\newcommand{\bla}{\big \langle}
\newcommand{\bra}{\big \rangle}
\numberwithin{equation}{section}
\newcommand{\ud}{\,\mathrm{d}}
\newcommand{\esssup}{\operatornamewithlimits{ess\,sup}}
\newcommand{\abs}[1]{|#1|}
\newcommand{\Norm}[2]{\|#1\|_{#2}}
\newcommand{\ave}[1]{\langle #1\rangle}
\newcommand{\Ave}[1]{\Big\langle #1\Big\rangle}
\newcommand{\BMO}{\operatorname{BMO}}
\newcommand{\bmo}{\operatorname{bmo}}
\newcommand{\loc}{\operatorname{loc}}
\renewcommand{\Re}{\operatorname{Re}}
\newcommand{\ch}{\operatorname{ch}}
\newcommand{\calD}{\mathcal{D}}
\newcommand{\wt}[1]{{\widetilde{#1}}}
\theoremstyle{plain}
\newtheorem{thm}[equation]{Theorem}
\newtheorem{lem}[equation]{Lemma}
\newtheorem{prop}[equation]{Proposition}
\theoremstyle{definition}
\newtheorem{defn}[equation]{Definition}
\theoremstyle{remark}
\newtheorem{rem}[equation]{Remark}
\title{Two-weight inequalities for multilinear commutators in product spaces}
\author{Emil Airta}
\author{Kangwei Li}
\author{Henri Martikainen}
\address[E.A.]{Department of Mathematics and Statistics, University of Helsinki, P.O.B. 68, FI-00014 University of Helsinki, Finland}
\email{emil.airta@helsinki.fi}
\address[K.L.]{Center for Applied Mathematics, Tianjin University, Weijin Road 92, 300072 Tianjin, China}
\email{kli@tju.edu.cn}
\address[H.M.]{Department of Mathematics and Statistics, Washington University in St. Louis, 1 Brookings Drive, St. Louis, MO 63130, USA}
\email{henri@wustl.edu}
\subjclass[2010]{42B20}
\keywords{singular integrals, multilinear analysis, multi-parameter analysis, two-weight estimates, commutators}
\thanks{E. A. was supported by the Emil Aaltonen Foundation and by the Academy of Finland through the grant 327271.}
\thanks{K. L. was supported by the National Natural Science Foundation of China through project number 12001400.}
\thanks{While at the University of Helsinki H. M. was supported by the Academy of Finland through project numbers 294840 and 327271, and by the three-year research grant 75160010 of the University of Helsinki. }
\begin{document}

\allowdisplaybreaks

\begin{abstract}
This note is devoted to establishing two-weight estimates for commutators of singular integrals. We combine multilinearity with product spaces.
A new type of two-weight extrapolation result is used to yield the quasi-Banach range of estimates.
\end{abstract}

\maketitle

\section{Introduction}
Commutators have the general form $[b,T]\colon f \mapsto bTf - T(bf)$. Here $T$ is a singular integral operator
$$
Tf(x) = \int_{\R^d} K(x,y)f(y)\ud y.
$$
Well-known examples include the Hilbert transform $H$ in dimension $d = 1$, which has the kernel $K(x,y) = \frac{1}{x-y}$, and the Riesz transforms $R_j$ in dimensions $d \ge 2$, which have
the kernel $K_j(x,y) = \frac{x_j-y_j}{|x-y|^{d+1}}$, $j=1,\ldots, d$.

Our work revolves around the Coifman--Rochberg--Weiss \cite{CRW} result, where the two-sided estimate
$$
\|b\|_{\BMO} \lesssim \|[b,T]\|_{L^p(\R^d) \to L^p(\R^d)} \lesssim \|b\|_{\BMO}, \qquad p \in (1,\infty),
$$
was proved for a class of non-degenerate singular integrals $T$ on $\R^d$. Here $\BMO$ stands for functions of bounded mean oscillation:
$$
\Norm{b}{\BMO} :=\sup_I \fint_I  \abs{b-\ave{b}_I},
$$
where the supremum is over all cubes $I \subset \R^d$ and $\ave{b}_I = \fint_I b := \frac{1}{|I|} \int_I b$.
The corresponding two-weight problem concerns
estimates from $L^p(\mu)$ to $L^p(\lambda)$ for two different weights $\mu, \lambda$ and has recently attracted
interest after the work by Holmes--Lacey--Wick \cite{HPW}. See also e.g. \cites{LOR1,LOR2,HyCom}.

In this note we establish that two-weight estimates for commutators can be proved under the joint difficulty of multilinearity and product spaces. Both have been
considered separately before: see e.g.  \cites{EA, ALMV, HPW, LMV:Bloom, LMV:Bloom2} for the multi-parameter work, and \cite{Kunwar2018} and \cite{Li} for the multilinear
work. The recent satisfactory multilinear result of \cite{Li} is based on sparse domination and the approach cannot be used in our setting -- this is due to the product space
nature of the problem.
For given exponents $1 < p_1, \ldots, p_n \le \infty$ and $1/p = \sum_i 1/p_i> 0$, a natural form of a weighted estimate in the $n$-variable context has the form
$$
\Big \|T(f_1, \ldots, f_n)\prod_{i=1}^n w_i \Big \|_{L^p} \lesssim \prod_{i=1}^n \|f_i w_i\|_{L^{p_i}}.
$$
The key thing is to only impose a \emph{joint} condition on the tuple of weights $\vec w = (w_1, \ldots, w_n) \in A_{\vec p}$ rather
than to assume individual conditions $w_i^{p_i} \in A_{p_i}$.
See Lerner, Ombrosi, P\'erez, Torres and Trujillo-Gonz\'alez \cite{LOPTT} and for multi-parameter versions \cite{LMV:gen}.
Naturally, this interplay is trickier still in our two-weight setting.

Our result is the following.
\begin{thm}\label{thm:main}
Let $T$ be an $n$-linear bi-parameter Calder\'on-Zygmund operator. 
Let $\vec p = (p_1, \ldots, p_n)$ with $1 < p_i \le \infty$ and $$\frac{1}{p} = \sum_{i=1}^n \frac{1}{p_i} > 0.$$
With a fixed $j \in \{1, \ldots, n\}$
let $(w_1, \ldots, w_n)$  and $(w_1,\ldots, \lambda_j, \ldots, w_n)$ be two tuples of weights in the genuinely multilinear bi-parameter weight class $A_{\vec p}$ and define the associated Bloom weight $\nu = w_j \lambda_j^{-1}$. If we have
$b \in \bmo(\nu)$ and $\nu \in A_\infty$, then
$$
\|[b,T]_j(f_1, \ldots, f_n) \nu^{-1}w \|_{L^p} \lesssim \| b\|_{\bmo(\nu)} \prod_{i=1}^n \|f_i w_i\|_{L^{p_i}},\quad w = \prod_{i=1}^n w_i.
$$
The corresponding lower bound holds if $T$ is suitably non-degenerate.
\end{thm}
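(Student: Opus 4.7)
The plan is to follow the Bloom two-weight commutator strategy adapted to the bi-parameter multilinear setting. First I would apply a bi-parameter multilinear dyadic representation theorem to write $T$ as an average, over random dyadic grids in each of the two parameters, of bi-parameter multilinear model operators of shift, partial paraproduct, and full paraproduct type. Since sparse domination is unavailable in the bi-parameter world, this reduction is the substitute that allows pointwise dyadic techniques to be brought to bear, and it suffices to establish the claimed Bloom estimate, uniformly in the complexities, for each such model operator.

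Next I would open the commutator $[b,T]_j$ using the bi-parameter paraproduct decomposition of pointwise multiplication by $b$: in each of the two parameters $b$ decomposes into paraproducts plus a remainder, yielding a finite family of commutator-type model operators $\mathcal{U}_b$ in which the $b$-coefficients attach to the Haar data of the $j$-th input or of the output. The task then reduces to proving, for each $\mathcal{U}_b$,
\begin{equation*}
\|\mathcal{U}_b(f_1,\ldots,f_n)\nu^{-1}w\|_{L^p}\lesssim \|b\|_{\bmo(\nu)}\prod_{i=1}^n\|f_iw_i\|_{L^{p_i}},
\end{equation*}
uniformly in complexities.

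I would first prove this in a Banach range, choosing all $p_i$ comfortably large so that $p > 1$; there, genuinely multilinear bi-parameter $A_{\vec p}$ estimates for the base model operators are available, and the hypothesis $b\in\bmo(\nu)$ with $\nu\in A_\infty$ converts, via the product John--Nirenberg inequality, into the required weighted paraproduct bounds. The full quasi-Banach range $1/p = \sum 1/p_i > 0$ is then reached by invoking the new two-weight multilinear extrapolation promised in the abstract. This is where I expect the main obstacle to lie: ordinary extrapolation of genuinely multilinear weights does not a priori respect the Bloom structure, because one wants to vary the tuple inside $A_{\vec p}$ while keeping the Bloom weight $\nu = w_j\lambda_j^{-1}$ --- and hence the norm $\|b\|_{\bmo(\nu)}$ --- fixed, and one must accommodate the extra factor $\nu^{-1}$ on the left-hand side. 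A tailored formulation of the extrapolation is therefore essential, and is presumably the most technical contribution of the note.

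Finally, the lower bound under non-degeneracy follows the now-standard testing strategy: for a fixed dyadic rectangle $I\times J$, test $[b,T]_j$ against tuples of functions supported away from $I\times J$ whose contributions through the kernel of $T$ admit suitable pointwise lower bounds, and then compare the resulting two-weight $L^p$-norm against $|I\times J|^{-1}\int_{I\times J}|b - \langle b\rangle_{I\times J}|$ integrated against $\nu^{-1}$, which on taking the supremum over $I\times J$ recovers $\|b\|_{\bmo(\nu)}$.
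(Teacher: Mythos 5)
Your upper-bound plan is essentially the paper's route: dyadic representation into shifts, partial and full paraproducts; expansion of the commutator via paraproduct decompositions of multiplication by $b$ (with the illegal, non-cancellative pieces handled by inserting averages of $b$); a direct proof in the Banach range $p>1$ using the genuinely multilinear bi-parameter $A_{\vec p}$ bounds for the model operators together with weighted $H^1$--$\BMO$/Muckenhoupt--Wheeden type estimates; and the new two-weight extrapolation to reach $p\le 1$. One caveat on that last step: the paper's extrapolation theorem keeps the exponent in the Bloom slot $j$ fixed and only varies the others, so it does not suffice to prove the Banach-range estimate for ``all $p_i$ comfortably large''; you must establish it for every $p_j\in(1,\infty]$ (with the remaining exponents free to be large so that $p>1$), which is exactly why the paper proves the model-operator bounds for general tuples with $p>1$.

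The genuine gap is in the lower bound. Testing $[b,T]_j$ against plain indicator-type functions and comparing with the unweighted oscillation $|R|^{-1}\int_R|b-\ave{b}_R|$ ``integrated against $\nu^{-1}$'' does not produce $\|b\|_{\bmo(\nu)}=\sup_R \nu(R)^{-1}\int_R|b-\ave{b}_R|$ in the two-weight multilinear setting. What the paper does instead is test against the dual-weighted functions $1_R\sigma_1,\ldots,1_A\sigma_j,\ldots,1_R\sigma_n$ with $\sigma_i=w_i^{-p_i'}$, use the median of $b$ on the translated rectangle $\wt R$ (the median method) so that no a priori cancellation of $b$ is needed, and invoke the joint condition $(w_1,\ldots,w_n,\nu w^{-1})\in A^*_{\vec p}$ together with the reverse H\"older inequality $\ave{\nu}_R\ave{\sigma_j}_R\gtrsim\ave{\nu\sigma_j}_R$ to convert the resulting estimate into the $\sigma_j$-weighted oscillation bound $\frac{1}{\nu\sigma_j(R)}\int_R|b-\alpha|\sigma_j\lesssim 1$. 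This gives $b\in\bmo_{\sigma_j}(\nu)$, and only via the equivalence $\bmo_{\sigma_j}(\nu)=\bmo(\nu)$ (Proposition valid when $\nu,\sigma_j,\nu\sigma_j\in A_\infty$) does one recover the asserted norm. Your sketch omits both the weighted testing functions and this weighted-oscillation characterization, and without them the comparison you propose does not close; an adequate argument must supply these ingredients (or an equivalent substitute).
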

For the exact definitions see the main text.

Extrapolation methods are important in our current work -- they are used to yield the quasi-Banach range $p < 1$.
The extrapolation theorem of Rubio de Francia says that if $\|g\|_{L^{p_0}(w)} \lesssim \|f\|_{L^{p_0}(w)}$ for some $p_0 \in (1,\infty)$ and all $w \in A_{p_0}$, then $\|g\|_{L^{p}(w)} \lesssim \|f\|_{L^{p}(w)}$ for all $p \in (1,\infty)$ and all $w \in A_{p}$.
In \cite{GM} (see also \cite{DU}) a multivariable analogue was developed in the setting $w_i^{p_i} \in A_{p_i}$, $i = 1, \ldots, n$.
Very recently, in \cites{LMO, LMMOV, Nieraeth} it was shown that also the genuinely multilinear weighted estimates can be extrapolated. We prove
a suitable two-weight adaptation that can be used in our current work.
\begin{thm}\label{thm:extrapo}
Let $(f,f_1,\ldots, f_n)$ be a tuple of non-negative functions. Let $1\le p_i\le \infty$, $1\le i \le n,$  $ \frac{1}{p}=\sum_{i=1}^n\frac{ 1}{p_i},$ and $j \in \{1,\ldots,n\}.$ Assume that for all $(w_1, \cdots, w_n),$ $ (w_1,\ldots, \lambda_j, \ldots, w_n)\in A_{\vec p}$ with $w_j\lambda_j^{-1}\in A_\infty$, there holds that 
\[
\Big\|f \lambda_j \prod_{\substack{i=1\\i \neq j}}^n w_i \Big \|_{L^p}\lesssim \prod_{i=1}^n \|f_iw_i\|_{L^{p_i}}.
\]Then for all $(w_1, \cdots, w_n), (w_1, \ldots,\lambda_j,\cdots, w_n)\in A_{\vec q}$ with $w_j\lambda_j^{-1}\in A_\infty$ and $1<q_i \le \infty, i\neq j$, $1/q=1/{p_j}+\sum_{\substack{i=1\\i\neq j}}^n 1/{q_i}>0$, there holds that 
\[
\Big\|f\lambda_j \prod_{\substack{i=1\\i \neq j}}^n w_i\Big\|_{L^q}\lesssim \|f_jw_j\|_{L^{p_j}}\prod_{\substack{i=1\\i \neq j}}^n \|f_iw_i\|_{L^{q_i}}.
\]
\end{thm}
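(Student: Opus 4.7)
The plan is to extrapolate one exponent at a time. Since $q_j=p_j$ is fixed and only the exponents with $i\neq j$ may change, it suffices to establish the special case where there is a single index $i_0\in\{1,\ldots,n\}\setminus\{j\}$ with $q_{i_0}\neq p_{i_0}$ and $q_i=p_i$ for every other $i$. Iterating the single-index step over $i\neq j$ then yields the full theorem.

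For the single-index step, freeze the weights $w_i$ and functions $f_i$ for $i\notin\{i_0,j\}$ and vary only the $i_0$-th slot, keeping the Bloom pair $(w_j,\lambda_j)$ unchanged. A crucial observation is that the Bloom weight $\nu=w_j\lambda_j^{-1}$ depends only on $w_j$ and $\lambda_j$, so modifying $w_{i_0}$ leaves $\nu$ (and hence its $A_\infty$-membership) untouched. The engine is the off-diagonal / two-weight version of Rubio de Francia extrapolation, run in the $i_0$-th variable, in the spirit of \cite{LMMOV}, \cite{LMO}, \cite{Nieraeth}, but adapted to the Bloom setting.

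Concretely, let $M$ denote the Hardy-Littlewood maximal operator. I would build one or two iteration operators of the form
$$
Rh=\sum_{k=0}^\infty\frac{M^k h}{\big(2\|M\|_{L^r(\sigma)}\big)^k},
$$
with exponent $r$ and measure $\sigma$ calibrated against $p_{i_0}$, $q_{i_0}$ and the frozen weights. Each $Rh$ is bounded on its native space $L^r(\sigma)$, dominates $h$ pointwise, and lies in $A_1$ with controlled constant. Combined with duality (used when $q_{i_0}<p_{i_0}$) and H\"older, this produces a substitute weight $\widetilde w_{i_0}$ so that both $(w_1,\ldots,\widetilde w_{i_0},\ldots,w_n)$ and its $\lambda_j$-twin lie in the source class $A_{\vec p}$, while $\nu$ is unchanged. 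Applying the hypothesis to the modified tuples gives an $L^{p_{i_0}}$-bound on $f_{i_0}\widetilde w_{i_0}$, which then collapses back to the desired $L^{q_{i_0}}$-bound on $f_{i_0}w_{i_0}$ via the RdF estimate.

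The main obstacle will be the calibration step, i.e.\ ensuring that the substitute tuples really land in $A_{\vec p}$. This requires a characterisation of the genuinely multilinear $A_{\vec p}$ class that decouples the role of $w_{i_0}$ from the other weights (for instance, through a joint condition on $\prod_i w_i$ together with $A_\infty$-type conditions on the individual factors), so that the RdF-produced $\widetilde w_{i_0}$ can be inserted without disturbing the contributions of the frozen $w_i$ and of the Bloom pair $(w_j,\lambda_j)$. The Bloom hypothesis $\nu\in A_\infty$ is preserved automatically thanks to $i_0\neq j$. Once this calibration is in place, the remainder of the argument is bookkeeping in the classical Rubio de Francia style.
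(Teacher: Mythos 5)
There is a genuine gap. Your plan names the right scaffolding (reduce to changing one exponent $q_{i_0}\neq p_{i_0}$ with all others frozen, note that $\nu=w_j\lambda_j^{-1}$ is untouched, invoke an LMMOV-type characterisation of $A_{\vec p}$ that isolates the $i_0$-th slot, and run a Rubio de Francia algorithm), and this matches the paper's skeleton. But the single step that makes the two-weight statement nontrivial is precisely the one you defer as ``the main obstacle'' and never carry out: you must produce \emph{one} substitute weight $\widetilde w_{i_0}$ such that \emph{both} $(w_1,\ldots,\widetilde w_{i_0},\ldots,w_n)$ \emph{and} its $\lambda_j$-twin lie in $A_{\vec p}$ \emph{simultaneously}. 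The iteration operator you propose, $Rh=\sum_k M^k h/(2\|M\|_{L^r(\sigma)})^k$ built from a single maximal operator and a single reference measure $\sigma$, only yields an $A_1$-type majorant relative to that one measure, hence only controls one of the two tuples; running two separate algorithms (one per tuple) and then trying to merge the outputs does not obviously work either. The paper resolves exactly this by rewriting the two $A_{\vec q}$ conditions, via the LMMOV characterisation, as $W_w=w_n\widehat w^{1/q_n'}\in A_{q_n,q}(\widehat w)$ and $W_\lambda=w_n\widehat\lambda^{1/q_n'}\in A_{q_n,q}(\widehat\lambda)$ with $\widehat w=(\prod_{i<n}w_i)^{\rho}$, $\widehat\lambda=(\lambda_1\prod_{1<i<n}w_i)^{\rho}$, and then running Rubio de Francia on the \emph{composition} $M_{\widehat\lambda}M_{\widehat w}$ (resp. the conjugated operators $M'_{\widehat\lambda}M'_{\widehat w}$) of the two weighted bi-parameter maximal functions, which is bounded on one common space $L^{(1+q_n'/q)'}(w_n^{-q_n'})$ (resp. $L^{1+q_n'/q}(w_n^{-q_n'})$). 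The resulting majorant $H$ then satisfies $A_1(\widehat w)$ and $A_1(\widehat\lambda)$ bounds at once, and an explicit computation verifies $v_n\widehat w^{1/p_n'}\in A_{p_n,p}(\widehat w)$ and $v_n\widehat\lambda^{1/p_n'}\in A_{p_n,p}(\widehat\lambda)$ for the substitute $v_n$. None of this simultaneity mechanism, nor the verification that the modified tuples actually land in $A_{\vec p}$, appears in your proposal, so as written it does not prove the theorem.

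Two smaller points: the maximal operators must be the \emph{weighted, bi-parameter} (strong) maximal functions $M_{\widehat w}$, $M_{\widehat\lambda}$ taken over rectangles, whose boundedness comes from $\widehat w,\widehat\lambda\in A_{n\rho}$ and Fefferman-type results, not the unweighted Hardy--Littlewood operator; and the duality step occurs in the opposite regime from what you state, namely when $q_{i_0}>p_{i_0}$ (the paper's Case 2, where $\|fW_\lambda\|_{L^q(\widehat\lambda)}^p$ is dualised against $h\in L^{s/p}(W_\lambda^q\widehat\lambda)$), while for $q_{i_0}<p_{i_0}$ one proceeds directly with H\"older and the normalised function $h=f_nw_n^{q_n'}/\|f_nw_n\|_{L^{q_n}}$.
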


\section{Preliminaries}\label{sec:def}

Throughout this paper, $A\lesssim B$ means that $A\le CB$ with some constant $C$ that we deem unimportant to track at that point.
We write $A\sim B$ if $A\lesssim B\lesssim A$. Sometimes we e.g. write $A \lesssim_{\epsilon} B$ if we want to make the point that $A \le C(\epsilon) B$.

\subsection{Dyadic notation}
Given a dyadic grid $\calD$ in $\R^d$, $I \in \calD$ and $k \in \Z$, $k \ge 0$, we use the following notation:
\begin{enumerate}
\item $\ell(I)$ is the side length of $I$.
\item $I^{(k)} \in \calD$ is the $k$th parent of $I$, i.e., $I \subset I^{(k)}$ and $\ell(I^{(k)}) = 2^k \ell(I)$.
\item $\ch(I)$ is the collection of the children of $I$, i.e., $\ch(I) = \{J \in \calD \colon J^{(1)} = I\}$.
\item $E_I f=\langle f \rangle_I 1_I$ is the averaging operator, where $\langle f \rangle_I = \fint_{I} f = \frac{1}{|I|} \int _I f$.
\item $\Delta_If$ is the martingale difference $\Delta_I f= \sum_{J \in \ch (I)} E_{J} f - E_{I} f$.
\item $\Delta_{I,k} f$ is the martingale difference block
$$
\Delta_{I,k} f=\sum_{\substack{J \in \calD \\ J^{(k)}=I}} \Delta_{J} f.
$$
\end{enumerate}

For an interval $J \subset \R$ we denote by $J_{l}$ and $J_{r}$ the left and right
halves of $J$, respectively. We define $h_{J}^0 = |J|^{-1/2}1_{J}$ and $h_{J}^1 = |J|^{-1/2}(1_{J_{l}} - 1_{J_{r}})$.
Let now $I = I_1 \times \cdots \times I_d \subset \R^d$ be a cube, and define the Haar function $h_I^{\eta}$, $\eta = (\eta_1, \ldots, \eta_d) \in \{0,1\}^d$, by setting
\begin{displaymath}
h_I^{\eta} = h_{I_1}^{\eta_1} \otimes \cdots \otimes h_{I_d}^{\eta_d}.
\end{displaymath}
If $\eta \ne 0$ the Haar function is cancellative: $\int h_I^{\eta} = 0$. We exploit notation by suppressing the presence of $\eta$, and write $h_I$ for some $h_I^{\eta}$, $\eta \ne 0$. Notice that for $I \in \calD$ we have $\Delta_I f = \langle f, h_I \rangle h_I$ (where the finite $\eta$ summation is suppressed), $\langle f, h_I\rangle := \int fh_I$.

\subsection{Multi-parameter notation}
We will be working on the bi-parameter product space $\R^d = \R^{d_1} \times \R^{d_2}$.
We denote a general dyadic grid in $\R^{d_i}$ by $\calD^i$. We denote cubes in $\calD^i$ by $I^i, J^i, K^i$, etc.
Thus, our dyadic rectangles take the forms $ I^1 \times I^2$, $J^1 \times J^2$, $K^1 \times K^2$ etc. We usually
denote the collection of dyadic rectangles by $\calD = \calD^1 \times \calD^2$.

If $A$ is an operator acting on $\R^{d_1}$, we can always let it act on the product space $\R^d$ by setting $A^1f(x) = A(f(\cdot, x_2))(x_1)$. Similarly, we use the notation $A^i f$ if $A$ is originally an operator acting on $\R^{d_i}$. Our basic multi-parameter dyadic operators -- martingale differences and averaging operators -- are obtained by simply chaining together relevant one-parameter operators. For instance, a bi-parameter martingale difference is
$$
\Delta_R f = \Delta_{I^1}^1 \Delta_{I^2}^2 f, \qquad R = I^1 \times I^2.
$$
When we integrate with respect to only one of the parameters we may e.g. write
\[
\langle f, h_{I^1} \rangle_1(x_2):=\int_{\R^{d_1}} f(x_1, x_2)h_{I^1}(x_1) \ud x_1
\]
or
$$
\langle f \rangle_{I^1, 1}(x_2) := \fint_{I^1} f(x_1, x_2) \ud x_1.
$$

\subsection{Adjoints}
Consider an $n$-linear operator $T$ on $\R^d = \R^{d_1} \times \R^{d_2}$. Let $f_i = f_i^1 \otimes f_i^2$, $i = 1, \ldots, n+1$.
We set up notation for the adjoints of $T$ in the bi-parameter situation. We let $T^{j*}$, $j \in \{0, \ldots, n\}$, denote the full adjoints, i.e.,
$T^{0*} = T$ and otherwise
$$
\langle T(f_1, \dots, f_n), f_{n+1} \rangle
= \langle T^{j*}(f_1, \dots, f_{j-1}, f_{n+1}, f_{j+1}, \dots, f_n), f_j \rangle.
$$
A subscript $1$ or $2$ denotes a partial adjoint in the given parameter -- for example, we define
$$
\langle T(f_1, \dots, f_n), f_{n+1} \rangle
= \langle T^{j*}_1(f_1, \dots, f_{j-1}, f_{n+1}^1 \otimes f_j^2, f_{j+1}, \dots, f_n), f_j^1 \otimes f_{n+1}^2 \rangle.
$$
Finally, we can take partial adjoints with respect to different parameters in different slots also -- in that case we denote the adjoint by $T^{j_1*, j_2*}_{1,2}$. It simply interchanges
the functions $f_{j_1}^1$ and $f_{n+1}^1$ and the functions $f_{j_2}^2$ and $f_{n+1}^2$. Of course, we e.g. have $T^{j^*, j^*}_{1,2} = T^{j*}$ and $T^{0*, j^*}_{1,2} = T^{j*}_{2}$,
so everything can be obtained, if desired, with the most general notation $T^{j_1*, j_2*}_{1,2}$.
In any case, there are $(n+1)^2$ adjoints (including $T$ itself). Similarly, the bi-parameter dyadic model operators that we later define always have $(n+1)^2$ different forms.

\subsection{Multilinear bi-parameter weights}\label{sec:weights}

A weight $w(x_1, x_2)$ (i.e. a locally integrable a.e. positive function) belongs to the bi-parameter weight class $A_p = A_p(\R^{d_1} \times \R^{d_2})$, $1 < p < \infty$, if
$$
[w]_{A_p} := \sup_{R}\, \ave{w}_R \ave{w^{1-p'}}^{p-1}_R
= \sup_{R}\, \ave{w}_R \ave{w^{-\frac{1}{p-1}}}^{p-1}_R
< \infty,
$$
where the supremum is taken over rectangles $R$ -- that is, over $R = I^1 \times I^2$ where $I^i \subset \R^{d_i}$ is a cube. In contrast to the one-parameter definition,  we take supremum over rectangles instead of cubes.

We have
\begin{equation}\label{eq:eq28}
[w]_{A_p(\R^{d_1} \times \R^{d_2})} < \infty \textup { iff } \max\big( \esssup_{x_1 \in \R^{d_1}} \,[w(x_1, \cdot)]_{A_p(\R^{d_2})}, \esssup_{x_2 \in \R^{d_2}}\, [w(\cdot, x_2)]_{A_p(\R^{d_1})} \big) < \infty,
\end{equation}
and that $$
\max\big( \esssup_{x_1 \in \R^{d_1}} \,[w(x_1, \cdot)]_{A_p(\R^{d_2})}, \esssup_{x_2 \in \R^{d_2}}\, [w(\cdot, x_2)]_{A_p(\R^{d_1})} \big) \le [w]_{A_p(\R^{d_1}\times \R^{d_2})},
$$
while the constant $[w]_{A_p}$ is dominated by the maximum to some power.
It is also useful that $\ave{w}_{I^2,2} \in A_p(\R^{d_1})$ uniformly on the cube $I^2 \subset \R^{d_2}$.
For basic bi-parameter weighted theory see e.g. \cite{HPW}.
We say $w\in A_\infty(\R^{d_1}\times \R^{d_2})$ if
\[
[w]_{A_\infty}:=\sup_R \, \ave{w}_R \exp\big( \ave{\log w^{-1}}_R \big)<\infty.
\]
It is well-known that
$$A_\infty(\R^{d_1}\times \R^{d_2})=\bigcup_{1<p<\infty}A_p(\R^{d_1}\times \R^{d_2}).$$
We also define
$$
[w]_{A_1} = \sup_R \, \ave{w}_R \esssup_R w^{-1}.
$$

The following multilinear reverse H\"older property is well-known -- for the history and a very short proof see e.g. \cite{Li}*{Lemma 2.5}. The proof in our bi-parameter setting
is the same.
\begin{lem}\label{lem:RH}
Let $u_i \in (0, \infty)$ and $w_i \in A_{\infty}$, $i = 1, \ldots, N$, be bi-parameter weights. Then for every rectangle $R$ we have
$$
\prod_{i=1}^N \ave{w_i}_R^{u_i} \lesssim \Big\langle \prod_{i=1}^N w_i^{u_i} \Big\rangle_R.
$$
\end{lem}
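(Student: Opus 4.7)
The plan is to exploit the geometric-mean characterization already built into the bi-parameter $A_\infty$ definition from Section~\ref{sec:weights}; the proof then collapses to a handful of algebraic lines and the bi-parameter aspect is absorbed entirely by the fact that the supremum in $A_\infty$ is taken over rectangles.

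First I would unpack the defining inequality: from
\[
[w_i]_{A_\infty} = \sup_R \ave{w_i}_R \exp\big(\ave{\log w_i^{-1}}_R\big) < \infty
\]
one reads off $\ave{w_i}_R \le [w_i]_{A_\infty} \exp(\ave{\log w_i}_R)$ for every rectangle $R$, since $\log w_i^{-1} = -\log w_i$. Raising this to the power $u_i > 0$ and multiplying the resulting inequalities over $i = 1, \ldots, N$ gives
\[
\prod_{i=1}^N \ave{w_i}_R^{u_i} \le C \prod_{i=1}^N \exp\big(\ave{\log w_i^{u_i}}_R\big) = C \exp\Big(\Big\langle \log \prod_{i=1}^N w_i^{u_i} \Big\rangle_R\Big),
\]
where $C = \prod_i [w_i]_{A_\infty}^{u_i}$ and I have used linearity of the average together with $u_i \log w_i = \log w_i^{u_i}$ and $\sum_i \log w_i^{u_i} = \log \prod_i w_i^{u_i}$.

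Finally, Jensen's inequality applied to the concave function $\log$ yields $\exp(\ave{\log f}_R) \le \ave{f}_R$ for every positive $f$; taking $f = \prod_i w_i^{u_i}$ closes the argument and produces exactly the stated bound.

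There is really no obstacle. The bi-parameter character of the problem plays no role beyond the fact that the $A_\infty$ class in this paper is defined via rectangles, so every step transcribes verbatim from the one-parameter argument. It is worth noting that arbitrary exponents $u_i > 0$ are permitted here precisely because one never needs $w_i^{u_i}$ itself to belong to $A_\infty$; only the $A_\infty$ inequality for each individual $w_i$ is invoked, and the $u_i$ are simply passed through the logarithm.
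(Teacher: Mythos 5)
Your proof is correct and is essentially the same as the ``very short proof'' the paper points to (\cite{Li}*{Lemma 2.5}): use the geometric-mean form of the $A_\infty$ condition to bound each $\ave{w_i}_R$ by $[w_i]_{A_\infty}\exp(\ave{\log w_i}_R)$, multiply, and finish with Jensen's inequality, the bi-parameter setting entering only through the supremum over rectangles. No issues.
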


Next we define multilinear bi-parameter Muckenhoupt weights. Original one-parameter versions appeared in \cite{LOPTT}. Our definition in the bi-parameter case is the same as in \cite{LMV:gen}. 
\begin{defn}\label{defn:defn1}
Given $\vec p=(p_1, \ldots, p_n)$ with $1 \le p_1, \ldots, p_n \le \infty$ we say that
$\vec{w}=(w_1, \ldots, w_n)\in A_{\vec p} = A_{\vec p}(\R^{d_1} \times \R^{d_2})$, if
$$
0<w_i <\infty, \qquad i = 1, \ldots, n,
$$
almost everywhere and
$$
[\vec{w}]_{A_{\vec p}}
:=\sup_R \, \ave{w^p}_R^{\frac 1 p} \prod_{i=1}^n \ave{w_i^{-p_i'}}_R^{\frac 1{p_i'}} < \infty,
$$
where the supremum is over rectangles $R$,
$$
w := \prod_{i=1}^n w_i \qquad \textup{and} \qquad \frac 1 p = \sum_{i=1}^n \frac 1 {p_i}.
$$
If $p_i = 1$ we interpret $\ave{w_i^{-p_i'}}_R^{\frac 1{p_i'}}$ as
$\esssup_R w_i^{-1}$, and if $p = \infty$ we interpret
$\ave{w^p}_R^{\frac 1 p}$ as $\esssup_R w$.
\end{defn}

Conveniently, we can characterize the class $A_{\vec p}$ using the standard $A_p$ class. The lemma is proven in \cite{LOPTT} and the bi-parameter analog of the same proof is recorded in \cite{LMV:gen}.
\begin{lem}\label{lem:lem1}
Let $\vec p=(p_1, \ldots, p_n)$ with $1 \le p_1, \ldots, p_n \le \infty$, $1/p = \sum_{i=1}^n 1/p_i \ge 0$,
$\vec{w}=(w_1, \ldots, w_n)$ and $w = \prod_{i=1}^n w_i$.
We have
$$
[w_i^{-p_i'}]_{A_{np_i'}} \le [\vec{w}]_{A_{\vec p}}^{p_i'}, \qquad i = 1, \ldots, n,
$$
and
$$
[w^p]_{A_{np}} \le [\vec{w}]_{A_{\vec p}}^{p}.
$$
In the case $p_i = 1$ the estimate is interpreted as $[w_i^{\frac 1n}]_{A_1} \le [\vec{w}]_{A_{\vec p}}^{1/n}$, and in the case $p=\infty$
we have $[w^{-\frac{1}{n}}]_{A_1} \le [\vec{w}]_{A_{\vec p}}^{1/n}$.

Conversely, we have
$$
[\vec{w}]_{A_{\vec p}} \le [w^p]_{A_{np}}^{\frac{1}{p}} \prod_{i=1}^n [w_i^{-p_i'}]_{A_{np_i'}}^{\frac{1}{p_i'}}.
$$
\end{lem}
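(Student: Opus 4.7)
The plan is to prove every inequality in the statement by a direct application of Hölder's inequality, with exponents chosen so that the defining relation $\tfrac1p=\sum_i\tfrac1{p_i}$ (and its formal duals) forces the summation condition $\sum_i 1/e_i=1$ to hold exactly. Because both $[\vec w]_{A_{\vec p}}$ and the standard $A_q$ constants are defined via suprema of averages $\fint_R$ over rectangles $R=I^1\times I^2$, and these averages are insensitive to the bi-parameter structure, the argument is literally the same as the one-parameter version recorded in \cite{LOPTT}.

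For the first forward inequality $[w^p]_{A_{np}}\le[\vec w]_{A_{\vec p}}^p$, I fix a rectangle $R$ and, using $w=\prod_i w_i$, factor
\[
\ave{w^{-p/(np-1)}}_R=\Ave{\prod_i w_i^{-p/(np-1)}}_R.
\]
Hölder with exponents $e_i=(np-1)p_i'/p$ is legitimate because $\sum_i 1/e_i=\tfrac{p}{np-1}\sum_i\tfrac1{p_i'}=\tfrac{p}{np-1}\bigl(n-\tfrac1p\bigr)=1$, and it produces $\ave{w^{-p/(np-1)}}_R^{np-1}\le\prod_i\ave{w_i^{-p_i'}}_R^{p/p_i'}$. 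Combined with $\ave{w^p}_R\le[\vec w]_{A_{\vec p}}^p\prod_i\ave{w_i^{-p_i'}}_R^{-p/p_i'}$, which is immediate from the definition, this gives exactly $[w^p]_{A_{np}}\le[\vec w]_{A_{\vec p}}^p$. The analogous bound on $[w_i^{-p_i'}]_{A_{np_i'}}$ is obtained similarly, using $w_i=w\prod_{j\ne i}w_j^{-1}$: the Hölder exponents for the new factorization satisfy $\sum 1/e=\tfrac{p_i'}{np_i'-1}\bigl(\tfrac1p+\sum_{j\ne i}\tfrac1{p_j'}\bigr)=\tfrac{p_i'}{np_i'-1}\cdot\tfrac{np_i'-1}{p_i'}=1$.

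For the converse, fixing $R$ and applying the $A_{np}$ condition on $w^p$ together with the $A_{np_i'}$ condition on $w_i^{-p_i'}$ rearranges to
\[
\ave{w^p}_R^{1/p}\prod_i\ave{w_i^{-p_i'}}_R^{1/p_i'}\cdot X\le [w^p]_{A_{np}}^{1/p}\prod_i[w_i^{-p_i'}]_{A_{np_i'}}^{1/p_i'},
\]
where $X:=\ave{w^{-p/(np-1)}}_R^{(np-1)/p}\prod_i\ave{w_i^{p_i'/(np_i'-1)}}_R^{(np_i'-1)/p_i'}$. Everything then reduces to proving $X\ge 1$. The trick is that the ``dual'' exponents $(np-1)/p$ and $(np_i'-1)/p_i'$ sum to $n^2$ rather than $1$, so one cannot apply Hölder directly to $\ave{1}_R=1$. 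Instead, I use the pointwise identity $1=w^{-1/n^2}\prod_i w_i^{1/n^2}$ coming from $w=\prod_i w_i$, together with Hölder with the rescaled exponents $\bigl(n^2 p/(np-1),\,n^2 p_i'/(np_i'-1)\bigr)$; their reciprocals now sum to $n^2/n^2=1$. This yields $1\le X^{1/n^2}$, and raising to the $n^2$-th power gives $X\ge 1$.

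I expect this converse step to be the main obstacle: one has to notice that the natural Hölder exponents sum to $n^2$ and then absorb the discrepancy by the $1/n^2$ rescaling of the pointwise identity. The degenerate endpoints $p_i\in\{1,\infty\}$ and $p=\infty$ are handled by the same computations after reinterpreting $\ave{w_i^{-p_i'}}_R^{1/p_i'}$ as $\esssup_R w_i^{-1}$, $\ave{w^p}_R^{1/p}$ as $\esssup_R w$, and the corresponding limiting $A_{np_i'}$ or $A_{np}$ norms as the $A_1$ norms of $w_i^{1/n}$ or $w^{-1/n}$ appearing in the statement.
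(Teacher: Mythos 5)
Your proof is correct: the Hölder exponent bookkeeping checks out in both forward inequalities, the converse reduction to $X\ge 1$ via the pointwise identity $1=w^{-1/n^2}\prod_i w_i^{1/n^2}$ and the $1/n^2$-rescaled exponents is valid, and the endpoint cases $p_i=1$, $p=\infty$ indeed follow from the same computations with the stated reinterpretations. The paper itself does not prove the lemma but cites \cite{LOPTT} (with the bi-parameter version recorded in \cite{LMV:gen}), where essentially this same Hölder-based argument is carried out, so your approach coincides with the standard one.
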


Most of the proofs are duality based and this makes the following lemma relevant.
\begin{lem}[\cite{LMS}*{Lemma 3.1}]\label{lem:lem7}
Let $\vec p=(p_1, \ldots, p_n)$ with $1 < p_1, \ldots, p_n < \infty$ and $\frac 1 p = \sum_{i=1}^n \frac 1 {p_i} \in (0,1)$.
Let $\vec{w}=(w_1, \ldots, w_n)\in A_{\vec p}$ with $w = \prod_{i=1}^n w_i$ and define
\begin{align*}
\vec w^{\, i} &= (w_1, \ldots, w_{i-1}, w^{-1}, w_{i+1}, \ldots, w_n), \\
\vec p^{\,i} &= (p_1, \ldots, p_{i-1}, p', p_{i+1}, \ldots, p_n).
\end{align*}
Then we have
$$
[\vec{w}^{\,i}]_{A_{\vec p^{\, i}}} =
[\vec{w}]_{A_{\vec p}}.
$$
\end{lem}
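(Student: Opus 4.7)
The plan is to verify the identity by direct algebraic unfolding of the $A_{\vec p}$-characteristic; the proof is essentially bookkeeping and contains no structural content beyond the definition. The key observation is that the substitution $w_i \leftrightarrow w^{-1}$, $p_i \leftrightarrow p'$ merely permutes which factor of the supremum functional plays which role.

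First, writing $\tilde w_k$ and $\tilde p_k$ for the entries of $\vec w^{\,i}$ and $\vec p^{\,i}$, I would record two preliminary identities. The new product weight is
\[
\tilde w := \prod_k \tilde w_k = w^{-1}\prod_{k\neq i} w_k = w^{-1}\cdot \frac{w}{w_i} = w_i^{-1},
\]
and, using $1/p = \sum_k 1/p_k$, the new harmonic-mean exponent satisfies
\[
\frac{1}{\tilde p} := \sum_k \frac{1}{\tilde p_k} = \frac{1}{p'} + \sum_{k\neq i} \frac{1}{p_k} = \frac{1}{p'} + \frac{1}{p} - \frac{1}{p_i} = 1 - \frac{1}{p_i} = \frac{1}{p_i'},
\]
so $\tilde p = p_i'$ and $(\tilde p_i)' = (p')' = p$. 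Both expressions are well-defined because the hypotheses $p_i > 1$ and $1/p \in (0,1)$ force $p, p_i \in (1,\infty)$.

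Next, I would substitute these into $[\vec w^{\,i}]_{A_{\vec p^{\, i}}}$ and read off the factors one by one. The leading term $\ave{\tilde w^{\tilde p}}_R^{1/\tilde p}$ becomes $\ave{w_i^{-p_i'}}_R^{1/p_i'}$, the $k=i$ factor $\ave{\tilde w_i^{-\tilde p_i'}}_R^{1/\tilde p_i'}$ becomes $\ave{w^{p}}_R^{1/p}$, and every $k \neq i$ factor is identical to the corresponding factor in $[\vec w]_{A_{\vec p}}$. Rearranging the product then gives
\[
\ave{\tilde w^{\tilde p}}_R^{1/\tilde p} \prod_k \ave{\tilde w_k^{-\tilde p_k'}}_R^{1/\tilde p_k'} = \ave{w^p}_R^{1/p} \prod_k \ave{w_k^{-p_k'}}_R^{1/p_k'},
\]
rectangle by rectangle, and taking suprema yields the claimed equality.

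There is no substantive obstacle in this argument; the only point requiring attention is that $p'$ and $(p')' = p$ make sense, which is immediate from $p \in (1, \infty)$. The lemma simply records the symmetry of the $A_{\vec p}$-functional under this single-slot duality, and in particular shows that it passes between the $n+1$ tuples obtained by cycling the role of ``distinguished slot'' among the $n$ weights and the product $w$.
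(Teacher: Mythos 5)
Your verification is correct: the computations $\tilde w = w_i^{-1}$, $\tilde p = p_i'$, $(\tilde p_i)' = p$ are right, and the three types of factors in $[\vec w^{\,i}]_{A_{\vec p^{\,i}}}$ do match up bijectively with those of $[\vec w]_{A_{\vec p}}$ rectangle by rectangle, so the suprema coincide. The paper itself offers no proof -- it only cites \cite{LMS}*{Lemma 3.1} -- and your direct unfolding of the definition is exactly the standard bookkeeping argument behind that reference; the hypotheses $1<p_i<\infty$ and $1/p\in(0,1)$ indeed keep all exponents in $(1,\infty)$, so the special interpretations in Definition \ref{defn:defn1} for $p_i=1$ or $p=\infty$ never enter, as you observed.
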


In the main theorems of this paper we will be using the multilinear bi-parameter weights
$$(w_1,\ldots,w_n), (\lambda_1,w_2,\ldots,w_n) \in A_{(p_1,\ldots,p_n)}, \quad \text{and}\quad \nu := \lambda_1^{-1} w_1 \in A_\infty,$$ where $1 \le p_1, \ldots, p_n \le \infty$, $1/p = \sum_{i=1}^n 1/p_i > 0.$ Throughout this paper, we will be using notation $\sigma_i = w_{i}^{-p_i'}, \sigma_{n+1} = (\nu^{-1} w)^p,$ and $\eta_1= \lambda_1^{-p_1'}$ as they will appear regularly.

The assumption that $\nu \in A_\infty$ is necessary as it is not implied by the other two assumptions, see a counter-example in \cite{Li}.

However, instead of the two separate conditions 
$$
(w_1,\ldots,w_n) \in A_{(p_1,\ldots,p_n)} \quad\text{and} \quad(\lambda_1,w_2,\ldots,w_n) \in A_{(p_1,\ldots,p_n)},
$$ if we assume only that  $(w_1,\ldots,w_n,\nu w^{-1}) \in A_{(p_1,\ldots,p_n,p')},$ where $\nu = \lambda_1^{-1} w_1$ and $w = \prod_{i = 1}^n w_i,$ that is 
$$
\sup_R \prod_{i=1}^n \ave{w_i^{-p_i'}}_R^{\frac{1}{p_i'}} \ave{\nu^{-p} w^{p}}_R^{\frac 1p} \ave{\nu}_R < \infty,
$$
we would automatically get that 
$$
\prod_{i=1}^n w_i  \cdot \nu w^{-1} = \nu \in A_{n+1} \subset A_\infty
$$
by Lemma \ref{lem:lem1}.

Yet, it is unlikely that this assumption is enough for the  boundedness of the commutator as conjectured for the linear case in \cite{LORR:twoweight}. Although, we will show below that this assumption is enough for the boundedness of Bloom type paraproducts in the Banach range and also sufficient to conclude the lower bound of the commutator.

On the other hand, the joint assumption for the weights is very natural for the two-weight commutator estimates since 
the assumption $(w_1,\ldots,w_n,\nu w^{-1}) \in A_{(p_1,\ldots, p_n,p')}$ is implied by the two separate 
multilinear weight conditions and $\nu \in A_\infty.$ 

This is easy to verify. Let $\sum_{i=1}^n \frac{1}{p_i} =: \frac1p > 1$ and assume that $(w_1,\ldots,w_n), (\lambda_1,w_2,\ldots,w_n) \in A_{(p_1,\ldots,p_n)},$ and $\nu := \lambda_1^{-1} w_1 \in A_\infty.$ 
\begin{align*}
\prod_{i=1}^n \ave{w_i^{-p_i'}}_R^{\frac{1}{p_i'}} \ave{(\nu w^{-1})^{-p}}_R^{\frac 1p} \ave{\nu}_R  &=\prod_{i=1}^n \ave{w_i^{-p_i'}}_R^{\frac{1}{p_i'}} \ave{\lambda_1^p\prod_{i=2}^n w_i^p}_R^{\frac 1p} \ave{(\lambda_1^{-p_1'})^{\frac{1}{p_1'}}\prod_{i=2}^n (w_i^{-p_i'})^{\frac{1}{p_i'}} (w^{p})^{\frac 1p}}_R \\
&\stackrel{(*)}{\lesssim}\prod_{i=1}^n \ave{w_i^{-p_i'}}_R^{\frac{1}{p_i'}} \ave{\lambda_1^p\prod_{i=2}^n w_i^p}_R^{\frac 1p} \ave{\lambda_1^{-p_1'}}_R^{\frac{1}{p_1'}}\prod_{i=2}^n\ave{ w_i^{-p_i'}}_R^{\frac{1}{p_i'}} \ave{ w^{p}}_R^{\frac 1p} \\
&\leq [\vec w]_{A_{\vec p}}[(\lambda_1,w_2, \ldots,w_n)]_{A_{\vec{p}}},
\end{align*}
where in the step $(*)$ we apply \cite{Li}*{Lemma 2.9} for $\nu \in A_\infty.$

Motivated by the above discussion we give the following definition, where $p'$ does not appear hence $p>1$ is not needed.
\begin{defn}\label{def:Astar}
Given $\vec p=(p_1, \ldots, p_n)$ with $1 \le p_1, \ldots, p_n \le \infty,$ we say that
$\vec{w}=(w_1, \ldots, w_n, w_{n+1})\in A_{\vec p}^{*} = A_{\vec p}^*(\R^{d_1} \times \R^{d_2})$, if
$$
0<w_i <\infty, \qquad i = 1, \ldots, n+1,
$$
almost everywhere and
$$
[\vec{w}]_{A_{\vec p}^*}
:=\sup_R \,  \ave{w}_R \ave{w_{n+1}^{-p}}_R^{\frac 1 p}\prod_{i=1}^n \ave{w_i^{-p_i'}}_R^{\frac 1{p_i'}} < \infty,
$$
where the supremum is over rectangles $R$,
$$
w := \prod_{i=1}^{n+1} w_i \qquad \textup{and} \qquad \frac 1 p = \sum_{i=1}^n \frac 1 {p_i}.
$$
If $p_i = 1$ we interpret $\ave{w_i^{-p_i'}}_R^{\frac 1{p_i'}}$ as
$\esssup_R w_i^{-1}$, and if $p = \infty$ we interpret
$\ave{w^p}_R^{\frac 1 p}$ as $\esssup_R w$.
\end{defn}
Morally the difference is that with $A_{\vec p}^*$ we do not necessarily have  
$$
\prod_{i = 1}^{n}w_i^p \in A_\infty
$$
or
$
\lambda_j^{-p_j}\in A_\infty
$
 compared to assuming the two separate $A_{\vec{p}}$ and $\nu \in A_\infty$ but we are equipped with $\nu \in A_{n+1}.$

Furthermore, using this definition, we can write the following joint condition
$$
(w_1,\ldots,w_n,\nu w^{-1}) \in A_{(p_1,\ldots,p_n,p')} 
$$
as 
$(w_1,\ldots,w_n,\nu w^{-1}) \in A_{(p_1,\ldots,p_n)}^* = A_{\vec p}^*.$ 

%Indeed, the two separate conditions are not directly implied by the joint condition.
%\begin{exmp}
%For simplicity, we consider one-parameter weights in dimension $d = 1.$ Let $p = 2$ and consider tuple $(w, \nu w^{-1} =: \lambda^{-1}) = (|x|^{-1},|x|^{\frac{1}{3}}).$ Thus, we have
%$$
%w^{-2} = |x|^2 \in A_{4}, \lambda^2 = |x|^{-\frac23} \in A_4, \nu = |x|^{-\frac{2}{3}} \in A_2.
%$$ 
%However, $w^2 \notin A_q$ for any $q \in (1,\infty).$ 
%
%Similarly, even if we have additionally $\nu \sigma \in A_\infty,$ we do not have $A_p$ for the $\lambda^p.$ Let  $(w,\lambda^{-1}) = (|x|^{-\frac13},|x|^{-\frac12}),$ then these satisfy the joint $A_2$ condition along with $\nu w^{-2} \in A_\infty$ and $\lambda^{-2} \notin A_\infty.$
%\end{exmp}

\subsection*{$A_\infty$ extrapolation}

%\begin{thm}\label{thm:ext}
%Let $f_1, \ldots, f_n$ and $g$ be given functions.
%Given $\vec p=(p_1,\dots, p_n)$ with $1\le p_1,\dots, p_n\le \infty$ let $\frac1p= \sum_{i=1}^n \frac{1}{p_i}$. Assume that given any $\vec w=(w_1,\dots, w_n) \in A_{\vec p}$ the inequality
%\begin{equation}\label{extrapol:H*}
%\|gw\|_{L^{p}} \lesssim \prod_{i=1}^n \|f_iw_i\|_{L^{p_i} }
%\end{equation}
%holds, where $w:=\prod_{i=1}^n w_i $. Then for all exponents $\vec q=(q_1,\dots,q_n)$, with $1< q_1,\dots, q_n\le \infty$ and $\frac1q= \sum_{i=1}^n \frac{1}{q_i} >0$, and for all weights $\vec v=(v_1,\dots, v_n) \in A_{\vec q}$ the inequality
%\begin{equation*}\label{extrapol:C*}
%\|gv\|_{L^{q} } \lesssim \prod_{i=1}^n \|f_iv_i\|_{L^{q_i} }
%\end{equation*}
%holds, where $v:=\prod_{i=1}^n v_i $.
%
%Given functions $f_1^j, \ldots, f_n^j$ and $g^j$ so that \eqref{extrapol:H*} holds uniformly on $j$, we have
%for the same family of exponents and
%weights as above, and for all exponents $\vec{s}=(s_1,\dots, s_n)$ with $1< s_1,\dots, s_n\le \infty$ and $\frac1s=\sum_i \frac{1}{s_i} >0$ the inequality
%\begin{equation}\label{extrapol:vv*}
%\| (g^j v)_j\|_{L^{q}(\ell^s)}
%\lesssim
%\prod_{i=1}^n \|(f_i^j v_i)_j\|_{L^{q_i}(\ell^{s_i}) }.
%\end{equation}
%\end{thm}

Besides of the extrapolation theorem proven in this paper, we also need to use the following $A_\infty$-extrapolation result of \cite{CUMP}.
\begin{lem}\label{lem:extInfty}
Let $(f,g)$ be a pair of non-negative functions. Suppose that there exists some $0<p_0<\infty$ such that for every $w\in A_\infty$ we have
\begin{equation}\label{eq:extrapol}
\int f^{p_0} w \lesssim \int g^{p_0} w.
\end{equation}
Then for all $0<p<\infty$ and $w\in A_\infty$ we have
$$
\int f^{p} w \lesssim \int g^{p} w.
$$
In addition, let $\{(f_i,g_i)\}_i$ be a sequence of  pairs of non-negative functions defined on $\R^d.$ Suppose that for some $0 < p_0 < \infty,$  $(f_i, g_i)$ satisfies inequality \eqref{eq:extrapol} for every $i.$
Then, for all $0 < p,q < \infty$ and $w \in A_\infty (\R^d)$ we have
$$
\Big\|\Big(\sum_i (f_i)^q\Big)^{\frac{1}{q}}\Big\|_{L^p(w)} \lesssim_{[w]_{A_\infty}} \Big\|\Big(\sum_i (g_i)^q\Big)^{\frac{1}{q}}\Big\|_{L^p(w)},
$$
where $\{(f_j,g_j)\}_j$ is a sequence of  pairs of non-negative functions defined on $\R^d.$
\end{lem}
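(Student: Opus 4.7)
The plan is to prove Lemma \ref{lem:extInfty} by adapting the Rubio de Francia extrapolation algorithm to the class $A_\infty$, following \cite{CUMP}.

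For the scalar statement, I fix $w \in A_\infty$ and $p \in (0,\infty)$, treating first the case $p > p_0$. Setting $r = p/p_0 > 1$, duality gives
$$
\Big(\int f^p w\Big)^{p_0/p} = \|f^{p_0}\|_{L^r(w)} = \sup_{\substack{h \geq 0 \\ \|h\|_{L^{r'}(w)} \leq 1}} \int f^{p_0} h\, w.
$$
Since $w \in A_\infty = \bigcup_{q > 1} A_q$, I pick $q$ with $w \in A_q$, and assume for now that $r' > q$ so that $M$ is bounded on $L^{r'}(w)$. For each admissible $h$ I form the Rubio de Francia iterate
$$
\calR h := \sum_{k=0}^\infty \frac{M^k h}{(2\|M\|_{L^{r'}(w)\to L^{r'}(w)})^k},
$$
which satisfies $h \leq \calR h$, $\|\calR h\|_{L^{r'}(w)} \leq 2\|h\|_{L^{r'}(w)}$, and $\calR h \in A_1$ with constant at most $2\|M\|_{L^{r'}(w)\to L^{r'}(w)}$ independent of $h$. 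Since $\calR h \in A_1$ and $w \in A_\infty$, the product $(\calR h)\, w$ lies in $A_\infty$ with controlled constant. Applying the hypothesis at exponent $p_0$ with this weight and then H\"older,
$$
\int f^{p_0} h\, w \leq \int f^{p_0} (\calR h)\, w \lesssim \int g^{p_0} (\calR h)\, w \leq \|g^{p_0}\|_{L^r(w)} \|\calR h\|_{L^{r'}(w)} \lesssim \Big(\int g^p w\Big)^{p_0/p}.
$$
Taking the supremum over $h$ and raising to the power $p/p_0$ yields the claim under the restriction $r' > q$.

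I would then remove the restriction on $r'$ by iteration: each application of the step above enlarges the interval of valid exponents around $p_0$, and since the $A_\infty$ character of the underlying weights persists, after finitely many iterations every $p > p_0$ is covered. The case $p < p_0$ is handled by the symmetric argument, running the Rubio de Francia scheme on the dual side. For the vector-valued extension, I would rerun the iteration directly on $\ell^q$-valued sequences, using the Fefferman--Stein vector-valued maximal inequality in place of the scalar bound on $M$, or apply the scalar version to the linearized pair $\big((\sum_i f_i^q)^{1/q}, (\sum_i g_i^q)^{1/q}\big)$ after testing against an arbitrary sequence.

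The main obstacle I expect is the simultaneous control of two issues in the Rubio de Francia step: first, ensuring that $(\calR h)\, w \in A_\infty$ with a constant independent of $h$, which requires quantitative stability of the $A_\infty$ condition under multiplication by $A_1$ weights; and second, arranging the iteration so that each step enlarges the range of admissible $p$ by a definite amount, so that an arbitrary prescribed $p \in (0,\infty)$ is reached in finitely many steps. Once these two points are secured, the vector-valued conclusion follows by the standard extension of the scalar algorithm.
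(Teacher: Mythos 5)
First, note that the paper does not actually prove Lemma \ref{lem:extInfty}: it is quoted from \cite{CUMP}, so your attempt has to be measured against the known argument there. Your template (Rubio de Francia iteration plus duality for $p>p_0$) is the right family of ideas, but it has a genuine gap exactly at the point you yourself flag as ``the main obstacle''. You build $\calR h$ from the \emph{unweighted} maximal operator $M$ and then assert that $\calR h\in A_1$ together with $w\in A_\infty$ gives $(\calR h)\,w\in A_\infty$ with a constant independent of $h$. There is no such product rule: $A_1\cdot A_\infty\not\subset A_\infty$ in general (already $|x|^{-\alpha}\cdot|x|^{-\beta}$ with $\alpha,\beta<d<\alpha+\beta$ shows the product of two $A_1$ weights need not even be a weight), and the membership $\calR h\in L^{r'}(w)$ only guarantees local integrability of the product, not a uniform $A_\infty$ constant. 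This is precisely why the proof in \cite{CUMP} runs the algorithm with the \emph{weighted} maximal operator $M_w$ (bounded on $L^t(w)$ for all $t>1$ because $A_\infty$ weights are doubling; in the product setting one uses R.~Fefferman's bound, cf.\ Proposition \ref{prop:prop1}): then $\calR h\in A_1(w)$, and for $u\in A_1(w)$, $w\in A_q$ one \emph{can} check directly that $uw\in A_q\subset A_\infty$ with controlled constant. Using $M_w$ also removes your restriction $r'>q$, so the whole iteration you propose to ``enlarge the interval of valid exponents'' becomes unnecessary; as written that iteration does not close anyway, because after one step you only obtain the conclusion at the new exponent for the subclass of weights satisfying the restriction, not for all of $A_\infty$, so it cannot be fed back in as a fresh hypothesis.

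Two further steps are asserted rather than proved. The case $p<p_0$ is not ``the symmetric argument'': for $p<p_0$, and in particular for $p<1$, there is no duality on the target side, and the extrapolation downwards requires a genuinely different device (in \cite{CUMP} it is handled by an $A_1(w)$-type construction adapted to $g$ combined with H\"older's inequality, not by dualizing). For the vector-valued statement, the Fefferman--Stein inequality is not the mechanism; the standard derivation is: extrapolate each scalar pair $(f_i,g_i)$ from $p_0$ to the exponent $q$, sum in $i$ to see that the pair $\big((\sum_i f_i^q)^{1/q},(\sum_i g_i^q)^{1/q}\big)$ satisfies the scalar hypothesis at exponent $q$ for every $w\in A_\infty$, and then apply the scalar result once more to pass from $q$ to $p$. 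Your second suggestion (``apply the scalar version to the linearized pair'') gestures at this, but as stated it is not a proof. In short: the architecture is right, but the key weight-theoretic step must be done with $A_1(w)$ rather than $A_1$, and the $p<p_0$ and vector-valued parts need the arguments indicated above.
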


\subsection{Maximal functions}
Let $\calD = \calD^1 \times \calD^2$
be a fixed lattice of dyadic rectangles and define
$$
M_{\calD}(f_1, \ldots, f_n) = \sup_{R \in \calD} \prod_{i=1}^n \ave{ |f_i| }_R 1_R.
$$
\begin{prop}\label{prop:prop2}
If $1 < p_1, \ldots, p_n \le \infty$ and $1/p = \sum_{i=1}^n 1/p_i$ we have
$$
\|M_{\calD}(f_1, \ldots, f_n)w \|_{L^p} \lesssim \prod_{i=1}^n \|f_i w_i\|_{L^{p_i}}
$$
for all multilinear bi-parameter weights $\vec w \in A_{\vec p}$.
\end{prop}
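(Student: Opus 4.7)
The plan is to follow the classical one-parameter strategy of Lerner, Ombrosi, P\'erez, Torres and Trujillo-Gonz\'alez for the multilinear maximal function, adapted to the bi-parameter dyadic setting by replacing cubes with rectangles and making use of the weight lemmas already in place.

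First, I would establish a pointwise estimate. For any rectangle $R$, H\"older's inequality applied factor-by-factor gives
\[
\langle |f_i|\rangle_R \le \langle (f_iw_i)^{p_i}\rangle_R^{1/p_i}\,\langle w_i^{-p_i'}\rangle_R^{1/p_i'},
\]
with the usual interpretation when $p_i = \infty$. Multiplying over $i$ and invoking the very definition of $[\vec w]_{A_{\vec p}}$ yields
\[
\prod_{i=1}^n \langle |f_i|\rangle_R \lesssim \langle w^p\rangle_R^{-1/p}\prod_{i=1}^n \langle (f_iw_i)^{p_i}\rangle_R^{1/p_i}.
\]
Since $1/p = \sum 1/p_i$, the first factor splits as $\langle w^p\rangle_R^{-1/p} = \prod_i \langle w^p\rangle_R^{-1/p_i}$, so the right-hand side collapses into a product of quantities of the form $(G_i(R)/w^p(R))^{1/p_i}$ with $G_i := (f_iw_i)^{p_i}$.

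Next I pass from this pointwise bound to the desired $L^p$-inequality. Taking supremum over $R \ni x$, raising to the $p$-th power, integrating against $w^p\,dx$ and exploiting $\sum p/p_i = 1$, the problem reduces to the multilinear weighted Carleson-type embedding
\[
\int_{\R^d} \sup_{R \ni x} \frac{\prod_i G_i(R)^{p/p_i}}{w^p(R)}\,w^p(x)\,dx \lesssim \prod_{i=1}^n \|G_i\|_{L^1(dx)}^{p/p_i},
\]
which is the bi-parameter analogue of the embedding that closes the LOPTT proof. This is handled by combining the bi-parameter weighted strong-maximal bounds (available from Lemma~\ref{lem:lem1}, since $w^p \in A_{np}$ and each $w_i^{-p_i'} \in A_{np_i'}$), the multilinear reverse H\"older inequality of Lemma~\ref{lem:RH}, and the openness of $A_p$ (to gain the strict exponents needed when $p/p_i$ equals $1$). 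The $A_\infty$-extrapolation of Lemma~\ref{lem:extInfty} then extends the estimate uniformly through the quasi-Banach range $p < 1$.

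The main technical obstacle relative to the one-parameter LOPTT argument is that the Calder\'on-Zygmund / Vitali decomposition on the maximal cubes of a level set does not transfer to the product setting: maximal dyadic rectangles need not be pairwise disjoint. This is exactly where the bi-parameter strong-maximal theory and the $A_\infty$-extrapolation replace the covering argument of the one-parameter proof, with all the required tools already recorded in this section.
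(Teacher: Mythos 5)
Your reduction breaks down at the very first step, and the ``Carleson-type embedding'' you reduce to is false. The factor-by-factor H\"older estimate replaces the product $\prod_{i}\ave{|f_i|}_R$, which decays like an $n$-fold power of an average, by essentially a single average, and this loss is fatal: take $n=2$, $p_1=p_2=2$ (so $p=1$, $p/p_i=1/2$), $w_1=w_2\equiv 1$ and $f_1=f_2=1_E$ with $E=[0,1]^{d_1}\times[0,1]^{d_2}$, so that $G_1=G_2=1_E$ and $w^p(R)=|R|$. Then the left-hand side of your embedding is
$$
\int_{\R^d}\sup_{R\ni x}\frac{|R\cap E|}{|R|}\,\ud x=\|M_{\calD}1_E\|_{L^1}=\infty
$$
(the rectangles $[0,1]^{d_1}\times[0,2^m)^{d_2}$ already make the integral diverge), while the right-hand side equals $1$. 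The proposition itself is true for this data precisely because $M_{\calD}(1_E,1_E)=(M_{\calD}1_E)^2$ is integrable --- that square is exactly the gain your H\"older step discards. Consequently no combination of the weighted strong maximal bound of Proposition \ref{prop:prop1} (which needs an exponent strictly above $1$), Lemma \ref{lem:RH}, openness, or the $A_\infty$ extrapolation of Lemma \ref{lem:extInfty} can close the argument: you would be proving a false inequality, and the loss occurs for generic exponents, not only when $p/p_i=1$. Your display is also not ``the embedding that closes the LOPTT proof'': in \cite{LOPTT} the product-of-averages structure is kept intact and the proof exploits the disjointness of the maximal cubes (the sets $E^k_j$ in the Calder\'on--Zygmund covering of the level sets), which is exactly the structure that is unavailable for rectangles.

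For the record, the paper contains no proof of Proposition \ref{prop:prop2}; it is quoted from \cite{LMV:gen} (originally \cite{GLTP}), so there is no internal argument to compare against and your sketch has to stand on its own. A workable argument along your lines must inject the reverse-H\"older/openness gain \emph{before} any H\"older inequality that destroys the product structure, for instance by writing $\ave{|f_i|}_R=\ave{|f_i|\sigma_i^{-1}}_R^{\sigma_i}\ave{\sigma_i}_R$ with $\sigma_i=w_i^{-p_i'}$ and estimating through the weighted strong maximal functions $M^{\sigma_i}_{\calD}$ (admissible since $\sigma_i\in A_{np_i'}$ and $w^p\in A_{np}$ by Lemma \ref{lem:lem1}), combined with Lemma \ref{lem:RH}; the genuine content is then the bookkeeping of which measure each maximal function is bounded with respect to, and carrying that out is essentially the proof in \cite{LMV:gen}, which is why the authors simply cite it.
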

An efficient proof can be found in \cite{LMV:gen} (originally proved in \cite{GLTP}).

Also we often need the result of R. Fefferman \cite{RF3}. Proof also recorded in \cite{LMV:Bloom}*{Appendix B}. Denote
$\langle f \rangle_R^{\mu} := \frac{1}{\mu(R)} \int_R f\ud \mu$ and define
$$
M_{\calD}^{\mu} f = \sup_R 1_R \langle |f| \rangle_R^{\mu}.
$$
\begin{prop}\label{prop:prop1}
Let $\lambda \in A_p$, $p \in (1,\infty)$, be a bi-parameter weight. Then for all $s \in (1,\infty)$ we have
$$
\| M_{\calD}^{\lambda} f \|_{L^s(\lambda)} \lesssim [\lambda]_{A_p}^{1+1/s} \|f\|_{L^s(\lambda)}.
$$
\end{prop}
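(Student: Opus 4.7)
This is the bi-parameter version of the classical fact that the dyadic maximal operator associated to a Borel measure $\mu$ is bounded on $L^s(\mu)$ for $s>1$ with a universal constant (Doob's inequality). The bi-parameter case is nontrivial because Vitali-type covering fails for rectangles; the $A_p$ hypothesis enters precisely to bridge this gap. The proof, following R.~Fefferman, reduces the bi-parameter bound to an iteration of one-parameter slicewise bounds via the $A_p$ condition.

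By \eqref{eq:eq28}, $\lambda(\cdot,x_2)\in A_p(\R^{d_1})$ and $\lambda(x_1,\cdot)\in A_p(\R^{d_2})$ uniformly, each with constant $\le[\lambda]_{A_p}$. Let $\calM_i$ denote the one-parameter $\lambda$-weighted dyadic maximal operator acting in the $i$-th variable only, e.g.\ $\calM_2 g(y_1,x_2):=\sup_{I^2\in\calD^2,\,I^2\ni x_2}\langle g(y_1,\cdot)\rangle_{I^2}^{\lambda(y_1,\cdot)}$. Since the one-parameter dyadic maximal with respect to any Borel measure is bounded on $L^s$ of that measure with a universal constant, Fubini immediately yields the $L^s(\lambda)$-boundedness, for every $s>1$, of $\calM_1$ and $\calM_2$ with constant depending only on $s$.

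For $R=I^1\times I^2\ni(x_1,x_2)$, applying the one-parameter maximal inequality slicewise in $y_2$ and integrating in $y_1$ gives
\[
\langle f\rangle_R^\lambda\le\frac{\int_{I^1}\calM_2 f(y_1,x_2)\,\Lambda_{I^2}(y_1)\,dy_1}{\int_{I^1}\Lambda_{I^2}(y_1)\,dy_1},\qquad \Lambda_{I^2}(y_1):=\int_{I^2}\lambda(y_1,y_2)\,dy_2.
\]
The averaged weight $|I^2|^{-1}\Lambda_{I^2}(\cdot)=\langle\lambda\rangle_{I^2,2}(\cdot)$ lies in $A_p(\R^{d_1})$ uniformly in $I^2$ with constant $\lesssim[\lambda]_{A_p}$, as noted in Section~\ref{sec:weights}. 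A standard $A_p$-based bi-parameter argument then dominates the supremum over $R\ni x$ of the right-hand side by $[\lambda]_{A_p}$ times an iterated maximal expression of the form $\calM_1[\calM_2 f](x_1,x_2)$ (or a variant bounded on $L^s(\lambda)$). Combining with the $L^s(\lambda)$-bounds on $\calM_1,\calM_2$ from the previous paragraph yields the estimate, and tracking the quantitative dependence on $[\lambda]_{A_p}$ through the weight comparison and the iteration produces the exponent $1+1/s$.

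The main obstacle is the weight-comparison step: the integrated weight $\Lambda_{I^2}$ on $\R^{d_1}$ depends on $I^2$ and is not pointwise comparable to the slice weight $\lambda(\cdot,x_2)$, only in an $A_p$-averaged sense. Making this rigorous and obtaining the sharp $[\lambda]_{A_p}$-dependence requires an auxiliary strong maximal operator or a careful stopping-time argument adapted to the uniform slicewise $A_\infty$ property, and constitutes the technical heart of the proof.
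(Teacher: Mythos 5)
The paper itself contains no proof of this proposition: it is quoted from R.~Fefferman \cite{RF3}, with a proof recorded in \cite{LMV:Bloom}*{Appendix B}, so there is no internal argument to compare your sketch against; it has to stand on its own, and as it stands it has a genuine gap. Your first reduction is correct: for $R=I^1\times I^2\ni x$ one indeed has $\langle |f|\rangle_R^{\lambda}\le \langle \calM_2 f(\cdot,x_2)\rangle_{I^1}^{u_{I^2}}$ with $u_{I^2}:=\ave{\lambda}_{I^2,2}\in A_p(\R^{d_1})$ uniformly, and the slicewise Doob bounds for $\calM_1,\calM_2$ via Fubini are fine. The problem is the next step, which you call ``a standard $A_p$-based bi-parameter argument'' dominating the supremum by $[\lambda]_{A_p}$ times $\calM_1[\calM_2 f]$ ``or a variant bounded on $L^s(\lambda)$''. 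After taking the supremum over $R\ni x$, the averaging weight $u_{I^2}$ still depends on $I^2$, and a domination of $\langle g\rangle_{I^1}^{u_{I^2}}$ by $C([\lambda]_{A_p})\,\langle g\rangle_{I^1}^{\lambda(\cdot,x_2)}$ for all nonnegative $g$ would amount to the normalized pointwise comparison $u_{I^2}(y_1)/u_{I^2}(I^1)\lesssim \lambda(y_1,x_2)/\int_{I^1}\lambda(\cdot,x_2)$ on $I^1$, uniformly in $I^2$. The product $A_p$ condition only controls rectangle averages of $\lambda$ and $\lambda^{1-p'}$; it gives no such pointwise comparison between an averaged slice and an individual slice, which is precisely why Fefferman's theorem is not a formal iteration of Doob's inequality and why the known proofs (Fefferman's covering argument, and the quantitative version recorded in \cite{LMV:Bloom}*{Appendix B}) circumvent the mismatch by a covering/stopping-time type scheme rather than by a pointwise composition bound.

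You in fact concede this yourself in the final paragraph, stating that making the weight-comparison step rigorous ``constitutes the technical heart of the proof''; likewise the exponent $1+1/s$ is asserted (``tracking the quantitative dependence \ldots produces the exponent'') rather than derived. So the proposal is an accurate description of the standard first reduction together with an acknowledgement that the core of the argument is missing; it is not a proof of the proposition, and the specific route it gestures at (pointwise domination by an iterated slicewise maximal operator at the cost of a power of $[\lambda]_{A_p}$) is not one that the hypotheses support.
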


\subsection{Square functions}\label{sec:SF}
We begin with the classical (dyadic) square function in the bi-parameter framework.
Let $\calD = \calD^1 \times \calD^2$
be a fixed lattice of dyadic rectangles. We define the square functions
$$
S_{\calD} f = \Big( \sum_{R \in \calD} |\Delta_R f|^2 \Big)^{1/2}, \,\, S_{\calD^1}^1 f = \Big( \sum_{I^1 \in \calD^1} |\Delta_{I^1}^1 f|^2 \Big)^{1/2}
$$
and define $S_{\calD^2}^2 f$ analogously.

The lower bound estimate of the square function for $A_{\infty}$ weights is essential for many estimates later on. The fact that the key weights $w^p$ and $w_i^{-p_i'}$ are
at least in $A_{\infty}$ for the multilinear weights of Definition \ref{defn:defn1} allows us to use this lower bound estimate.
\begin{lem}\label{lem:lem3} It holds
$$
\|f\|_{L^p(w)} \lesssim \|S_{\calD^j}^j f\|_{L^p(w)} \lesssim \|S_{\calD} f\|_{L^p(w)}
$$
for all $p \in (0, \infty)$ and bi-parameter weights $w \in A_{\infty}$.
\end{lem}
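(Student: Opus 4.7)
The plan is to deduce both inequalities from their classical one-parameter counterparts (the Chang--Wilson--Wolff / Wilson lower square function bound for $A_\infty$ weights) combined with the slicing characterization of bi-parameter $A_p$ weights in \eqref{eq:eq28} and the vector-valued $A_\infty$ extrapolation of Lemma \ref{lem:extInfty}.

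\emph{First inequality.} Take $j = 1$; the case $j = 2$ is symmetric. Since $A_\infty(\R^{d_1}\times\R^{d_2}) = \bigcup_{q>1} A_q$, I pick $q$ with $w \in A_q$; the slicing statement \eqref{eq:eq28} then gives that the slices $w(\cdot, x_2)$ belong to $A_q(\R^{d_1})$ with uniform constant for a.e.\ $x_2$, and in particular they are uniformly in $A_\infty(\R^{d_1})$. By Fubini,
\[
\|f\|_{L^p(w)}^p = \int_{\R^{d_2}} \int_{\R^{d_1}} |f(x_1,x_2)|^p w(x_1,x_2)\,dx_1\,dx_2,
\]
and the inner integral is controlled by the standard one-parameter Chang--Wilson--Wolff / Wilson lower square function estimate applied to $f(\cdot, x_2)$ with the weight $w(\cdot, x_2)$, with a constant depending only on the uniform slice-wise $A_\infty$ constant. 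Integrating back in $x_2$ yields $\|f\|_{L^p(w)} \lesssim \|S^1_{\calD^1} f\|_{L^p(w)}$.

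\emph{Second inequality.} Again take $j = 1$. I would use the identity
\[
\Big(\sum_{I^1 \in \calD^1} |S^2_{\calD^2} \Delta^1_{I^1} f|^2\Big)^{1/2} = \Big(\sum_{R \in \calD} |\Delta_R f|^2\Big)^{1/2} = S_\calD f,
\]
which reduces matters to the vector-valued estimate
\[
\Big\| \Big(\sum_{I^1} |\Delta^1_{I^1} f|^2\Big)^{1/2} \Big\|_{L^p(w)} \lesssim \Big\| \Big(\sum_{I^1} |S^2_{\calD^2} \Delta^1_{I^1} f|^2\Big)^{1/2} \Big\|_{L^p(w)}.
\]
This I would obtain from the $\ell^2$-valued part of Lemma \ref{lem:extInfty} applied to the pairs $(f_{I^1}, g_{I^1}) = (|\Delta^1_{I^1} f|, |S^2_{\calD^2}\Delta^1_{I^1} f|)$ indexed by $I^1 \in \calD^1$, with $q = 2$. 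Its scalar hypothesis at some exponent $p_0$ --- namely $\|f_{I^1}\|_{L^{p_0}(v)} \lesssim \|g_{I^1}\|_{L^{p_0}(v)}$ for every $v \in A_\infty(\R^{d_1}\times\R^{d_2})$ and every $I^1$ --- is precisely the first inequality already proven, applied with $j = 2$ to the function $\Delta^1_{I^1} f$, and the constant produced there does not depend on $I^1$.

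\emph{Main points to check.} The argument is conceptually straightforward, but one must be careful at two places. First, one has to invoke \eqref{eq:eq28} at an exponent $q$ large enough that $w \in A_q$, and package the uniform $A_\infty$ constants of the slices. Second, the vector-valued step requires verifying that the constants in the first-step estimate are $I^1$-independent; this is clear since those constants depend only on $p$ and the $A_\infty$ constant of $w$. I do not expect any substantive obstacle beyond these bookkeeping points, as all the necessary one-parameter and extrapolation ingredients are available in the excerpt.
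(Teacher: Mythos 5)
Your proposal is correct and follows essentially the same route as the paper: the first inequality is the classical one-parameter $A_\infty$ lower square function bound (applied slice-wise via \eqref{eq:eq28}), and the second is deduced from the $A_\infty$ extrapolation of Lemma \ref{lem:extInfty} together with the identity $\sum_{I^1}|S^2_{\calD^2}\Delta^1_{I^1}f|^2=\sum_R|\Delta_Rf|^2$. Your use of the vector-valued form of Lemma \ref{lem:extInfty} (rather than, say, the scalar form checked at $p_0=2$) is just a minor variant of the same argument, and your bookkeeping of the $I^1$-uniform constants is accurate.
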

The first inequality is the classical result found e.g. in \cite{Wi}*{Theorem 2.5} and the latter inequality  can be deduced using the $A_{\infty}$ extrapolation, Lemma \ref{lem:extInfty}.

Notice that by disjointness of supports we have, for example,
for all $k = (k_1, k_2) \in \{0,1,\ldots\}^2$ that
$$
S_{\calD} f = \Big( \sum_{K = K^1 \times K^2 \in \calD} |\Delta_{K,k} f|^2 \Big)^{1/2}, \qquad \Delta_{K,k} = \Delta_{K^1,k_1}^1 \Delta_{K^2, k_2}^2.
$$

Next, we take the definition of the $n$-linear square functions from \cite{LMV:gen}.
For $k= (k_1, k_2)$ we set
$$
A_1(f_1, \ldots, f_n) = A_{1,k}(f_1, \ldots, f_n)
= \Big( \sum_{K \in \calD} \langle | \Delta_{K,k} f_1 | \rangle_K ^2 \prod_{i=2}^n \langle |f_i| \rangle_K^2 1_K \Big)^{\frac{1}{2}}.
$$
In addition, we understand this so that $A_{1,k}$ can also take any one of the symmetric forms, where each $\Delta_{K^j, k_j}^j$ appearing in
$\Delta_{K,k} = \Delta_{K^1,k_1}^1 \Delta_{K^2, k_2}^2$ can alternatively be associated with any of the other functions $f_2, \ldots, f_n$. That is,
$A_{1,k}$ can, for example, also take the form
$$
A_{1,k}(f_1, \dots, f_n) =
\Big( \sum_{K \in \calD} \langle | \Delta^2_{K^2,k_2} f_1 | \rangle_K^2
\langle | \Delta^1_{K^1,k_1} f_2| \rangle_K^2 \prod_{i=3}^{n} \langle |f_i| \rangle_K^2 1_K \Big)^{\frac 12}.
$$
For $k = (k_1, k_2, k_3)$ we define
\begin{equation}\label{eq:eq11}
\begin{split}
&A_{2,k}(f_1, \ldots, f_n) \\
&= \Big( \sum_{K^2 \in \calD^2} \Big( \sum_{K^1 \in \calD^1}
\langle|\Delta^2_{K^2, k_1}f_1|\rangle_{K}\langle|\Delta^1_{K^1, k_2}f_2|\rangle_{K}
\langle|\Delta^1_{K^1, k_3}f_3|\rangle_{K} \prod_{i=4}^{n} \langle |f_i|\rangle_K1_{K}
\Big)^2\Big)^{ \frac 12},
\end{split}
\end{equation}
where we again understand this as a family of square functions. First, the appearing three martingale blocks can be
associated with different functions, too. Second, we can have the $K^1$ summation out and the $K^2$ summation in (we can interchange them), but then
we have two martingale blocks with $K^2$ and one martingale block with $K^1$.

Finally, for $k = (k_1, k_2, k_3, k_4)$ we define
$$
A_{3,k}(f_1, \ldots, f_n) = \sum_{K \in \calD} \langle | \Delta_{K,(k_1, k_2)} f_1| \rangle_K
\langle | \Delta_{K,(k_3, k_4)} f_2| \rangle_K \prod_{i=3}^n \langle |f_i| \rangle_K 1_K,
$$
where this is a family with two martingale blocks in each parameter, which can be moved around.
\begin{thm}[\cite{LMV:gen}*{Theorem 5.5.}]\label{thm:multilinSF}
If $1 < p_1, \ldots, p_n \le \infty$ and $\frac{1}{p} = \sum_{i=1}^n \frac{1}{p_i}> 0$ we have
$$
\|A_{j,k}(f_1, \ldots, f_n)w \|_{L^p} \lesssim \prod_{i=1}^n \|f_i w_i\|_{L^{p_i}}, \quad j=1,2,3,
$$
for all multilinear bi-parameter weights $\vec w \in A_{\vec p}$.
\end{thm}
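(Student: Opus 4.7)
Since this is cited directly from \cite{LMV:gen}, I would approach it by first specializing and then extrapolating. The plan is to produce a pointwise (or near-pointwise) domination that separates the single ``square function slot'' from the ``averaging slots'', combine it with the multilinear maximal function estimate in Proposition \ref{prop:prop2}, and finally close the weighted bound for the full range of $\vec p$ via multilinear bi-parameter extrapolation (the technology developed in \cite{LMMOV, Nieraeth}).

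The first case to tackle is $A_{1,k}$, which has the simplest structure. For $x\in\R^d$ I would pull the $(n-1)$ averaging factors out of the $\ell^2$-sum pointwise:
\[
A_{1,k}(f_1,\ldots,f_n)(x)^{2}
\le \sup_{K\ni x}\prod_{i=2}^{n}\ave{|f_i|}_K^{2}
\cdot \sum_{K\ni x}\ave{|\Delta_{K,k}f_1|}_K^{2},
\]
so that
\[
A_{1,k}(f_1,\ldots,f_n)(x)
\le M_{\calD}(\mathbf{1},f_2,\ldots,f_n)(x)\,\calS_{k}f_1(x),
\]
where $\calS_{k}f_1(x):=\big(\sum_{K\ni x}\ave{|\Delta_{K,k}f_1|}_K^{2}\big)^{1/2}$ is a non-cancellative bi-parameter square function. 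Using Jensen ($\ave{|\Delta_{K,k}f_1|}_K\le\ave{|\Delta_{K,k}f_1|^2}_K^{1/2}$) together with the orthogonality of martingale blocks and $A_\infty$ absorption (Lemma~\ref{lem:lem3}), one can bound $\calS_{k}f_1$ by $S_{\calD}f_1$ in $L^{p_1}(w_1^{p_1})$ whenever $w_1^{p_1}\in A_\infty$ -- which, for a member of an $A_{\vec p}$ tuple, holds after we first pass to an appropriate one-weight-per-slot setting by extrapolation. Then Hölder with exponents $p_1$ and $\tilde p$, $1/\tilde p=\sum_{i\ge 2}1/p_i$, and Proposition \ref{prop:prop2} applied to the $(n-1)$-linear maximal function, would yield the target estimate in the base case.

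For $A_{2,k}$ and $A_{3,k}$ the same scheme runs with an additional bookkeeping step: by Cauchy-Schwarz in the ``inner'' parameter sum one first peels off one of the two martingale blocks (whichever is attached to $f_1$ versus $f_2$), reducing the estimate to an $A_{1,k}$-type form with a mixed-parameter square function in place of $S_\calD$; Fubini plus one-parameter weighted square function bounds (Lemma~\ref{lem:lem3} applied in a single variable uniformly in the frozen variable, using \eqref{eq:eq28}) handle the remaining block. For $A_{3,k}$ the two full blocks are distributed between $f_1$ and $f_2$, so one iterates the reduction twice, pairing each block with the corresponding function and leaving $M_{\calD}$ acting on $(f_3,\ldots,f_n)$. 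The bi-parameter aspect makes these Fubini arguments the technically delicate point, but they are insensitive to which pairing is used and so cover the entire ``symmetric family'' description in Section \ref{sec:SF}.

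Finally, to get from the base case (easy weights, e.g.\ all $w_i^{p_i}\in A_\infty$) to every $\vec w\in A_{\vec p}$ and every admissible $\vec p$, I would invoke a multilinear bi-parameter Rubio de Francia extrapolation -- the same type as Theorem \ref{thm:extrapo} but one-weight per slot -- whose validity in the bi-parameter category is recorded in \cite{LMV:gen}. The main obstacle I expect is precisely the step where I need individual $A_\infty$-type control on $w_1^{p_1}$: the joint condition $\vec w\in A_{\vec p}$ by itself is not enough, so one has to prove a base case where extrapolation \emph{produces} the needed individual regularity, which is exactly the role played by Lemma~\ref{lem:lem1} and the multilinear reverse Hölder property (Lemma~\ref{lem:RH}).
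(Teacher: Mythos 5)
The paper does not prove this statement at all; it is imported verbatim from \cite{LMV:gen}*{Theorem 5.5}, so your sketch has to stand on its own, and as written it has a genuine gap at its core. The pointwise separation $A_{1,k}\le M_{\calD}(\mathbf 1,f_2,\ldots,f_n)\,\calS_k f_1$ followed by H\"older forces you to estimate each factor against its own weight: you need $\calS_k$ (or $S_{\calD}$) bounded on $L^{p_1}(w_1^{p_1})$ and the $(n-1)$-linear maximal function bounded for the truncated tuple $(w_2,\ldots,w_n)$. Neither piece of information is available for a genuinely multilinear tuple $\vec w\in A_{\vec p}$: Lemma \ref{lem:lem1} only gives $A_\infty$-type control of the dual weights $w_i^{-p_i'}$ and of $w^p$, never of $w_i^{p_i}$, and the truncated tuple need not belong to $A_{(p_2,\ldots,p_n)}$ --- this joint, non-separable character is exactly what distinguishes $A_{\vec p}$ from the product condition $w_i^{p_i}\in A_{p_i}$. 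Your proposed repair by extrapolation is circular: extrapolation in the one-weight-per-slot setting (of \cites{GM,DU} type) only yields conclusions for tuples with individual $A_{p_i}$ control, while the genuinely multilinear extrapolation of \cites{LMO,LMMOV,Nieraeth} requires as hypothesis the estimate for the \emph{whole} class $A_{\vec q}$ at some fixed exponent tuple --- which is precisely the statement the separation argument cannot deliver. So a ``base case with easy weights'' does not extrapolate to all of $A_{\vec p}$, and the claimed reduction ``first pass to a one-weight-per-slot setting by extrapolation'' has no content.

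The workable route --- the one in \cite{LMV:gen}, and visible inside this very paper in the proof of Lemma \ref{lem:SFpartial} --- keeps the weights joint throughout: first use genuinely multilinear extrapolation to reduce to $\vec p=(2,\ldots,2)$, $p=2/n$; then, for $(v_1,\ldots,v_n)\in A_{(2,\ldots,2)}$, dualize and insert factors $\langle\gamma_i\rangle_K/\langle\gamma_i\rangle_K$ with the dual weights $\gamma_i=v_i^{-2}$ (each in $A_{2n}\subset A_\infty$ by Lemma \ref{lem:lem1}); dominate the resulting averages by weighted maximal functions $M_{\calD}^{\gamma_i}$ and by square-function expressions of the kind handled in Proposition \ref{prop:5.8}; and recombine the weights using the multilinear reverse H\"older inequality (Lemma \ref{lem:RH}) together with the $A_\infty$ extrapolation of Lemma \ref{lem:extInfty}. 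At no point does one perform a H\"older splitting into individual slots carrying the original weights $w_i$. Your instinct that Lemma \ref{lem:lem1} and Lemma \ref{lem:RH} are the decisive inputs is correct, but they must be applied to $\sigma_i=w_i^{-p_i'}$ (respectively $v_i^{-2}$) inside a duality-based joint argument, not used to manufacture $A_\infty$ control on $w_1^{p_1}$, which simply is not there.
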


Moreover, we need a certain linear estimate which appears regularly when dealing with the commutator estimates.
\begin{prop}[\cite{LMV:gen}*{Proposition 5.8.}]\label{prop:5.8}
For $u \in A_{\infty}$ and $p, s \in (1, \infty)$ we have
$$
\Big\| \Big[ \sum_m \Big( \sum_{K \in \calD} \langle |\Delta_{K,k} f_m| \rangle_K^2 \frac{1_K}{\langle u \rangle_K^2} \Big)^{\frac{s}{2}} \Big]^{\frac{1}{s}} u^{\frac{1}{p}}
\Big\|_{L^p}
\lesssim \Big\| \Big( \sum_m |f_m|^s \Big)^{\frac{1}{s}} u^{-\frac{1}{p'}} \Big\|_{L^p}.
$$
\end{prop}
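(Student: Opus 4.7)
The plan is to reduce this vector-valued, two-weight estimate to a scalar one-weight estimate via a change of variables, and then apply the vector-valued version of Lemma~\ref{lem:extInfty}. With the substitution $g_m := f_m u^{-1}$, and using $u^{-1/p'}u = u^{1/p}$, the right-hand side rewrites as $\|(\sum_m|g_m|^s)^{1/s}\|_{L^p(u)}$. Defining
$$
Tg := \Big(\sum_{K\in\calD}\langle|\Delta_{K,k}(gu)|\rangle_K^2 \frac{1_K}{\langle u\rangle_K^2}\Big)^{1/2},
$$
the desired inequality is therefore equivalent to the one-weight vector-valued bound
$$
\Big\|\Big(\sum_m (T g_m)^s\Big)^{1/s}\Big\|_{L^p(u)} \lesssim \Big\|\Big(\sum_m|g_m|^s\Big)^{1/s}\Big\|_{L^p(u)}.
$$
Since $u\in A_\infty$, the vector-valued form of Lemma~\ref{lem:extInfty} reduces this, in turn, to a scalar inequality of the shape
$$
\int (Tg)^{p_0}\,W \lesssim \int|g|^{p_0}\,W \qquad (W\in A_\infty)
$$
for a single convenient exponent $p_0$.

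For the scalar estimate at $p_0=2$, the key identity is $\langle \varphi\rangle_K/\langle u\rangle_K = \langle\varphi/u\rangle_K^\mu$ with $d\mu=u\,dx$, which recasts $Tg$ as a $\mu$-averaged sum. Applying Cauchy--Schwarz inside the $\mu$-average, exploiting the orthogonality $\|\Delta_{K,k}(gu)\|_{L^2}^2 = \sum_{R^{(k)}=K}\|\Delta_R(gu)\|_{L^2}^2$, and using the pointwise control $|\Delta_R(gu)(x)| \lesssim \langle u\rangle_R\, M_\calD^\mu(g)(x)$ on $R$ (which follows from $\langle u\rangle_{R'}\lesssim\langle u\rangle_R$ for $R'\in\ch(R)$ together with the identity $\langle gu\rangle_R=\langle u\rangle_R\langle g\rangle_R^\mu$), one reduces the estimate to a weighted Carleson embedding in the $\mu$-framework. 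This last step is then handled by the $L^2(\mu)$-boundedness of $M^\mu_\calD$ supplied by Proposition~\ref{prop:prop1} together with the reverse H\"older property of $u\in A_\infty$.

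The main obstacle is that, although $u \in A_\infty$, its reciprocal $u^{-1}$ need not be, so the standard one-weight square function estimate on $L^2(u^{-1})$ is unavailable; any direct Cauchy--Schwarz bound that produces $\int(Sf)^2 u^{-1}$ therefore fails to close. The cleanest way around this is to carry out the entire analysis in the doubling measure $\mu = u\,dx$, where reverse H\"older, doubling, and Carleson embedding can all be invoked freely. Once the scalar $L^{p_0}(W)$ estimate is in hand for all $W \in A_\infty$, the full vector-valued variable-$(p,s)$ statement follows by a direct application of Lemma~\ref{lem:extInfty}.
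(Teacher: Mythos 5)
Your opening move is fine: substituting $g_m = f_m u^{-1}$ does rewrite the claim as $\big\|\big(\sum_m (Tg_m)^s\big)^{1/s}\big\|_{L^p(u)} \lesssim \big\|\big(\sum_m |g_m|^s\big)^{1/s}\big\|_{L^p(u)}$ for your operator $T$. The fatal step is the next one. Lemma \ref{lem:extInfty} requires the scalar inequality $\int (Tg)^{p_0} W \lesssim \int |g|^{p_0} W$ to hold for \emph{every} $W \in A_\infty$, and that statement is false for every choice of $p_0$. Take $u \equiv 1$ and $k=(0,0)$, so that $T$ is just the averaged dyadic (bi-parameter) square function; take $g = 1_{[0,1)} \otimes \big(1_{[0,1/2)} - 1_{[1/2,1)}\big)$ and $W(x_1,x_2) = |x_1|^a$ with $a$ large, which is a bi-parameter $A_\infty$ weight. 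For $R_j=[0,2^j)\times[0,1)$ one computes $|\Delta_{R_j}g| = 2^{-j}$ on $R_j$, hence $Tg \gtrsim 2^{-j}$ on $[2^{j-1},2^j)\times[0,1)$, so $\int (Tg)^{p_0} W \gtrsim \sum_{j\ge 1} 2^{-jp_0}\,2^{j(a+1)} = \infty$ as soon as $a+1>p_0$, while $\int |g|^{p_0} W < \infty$. The point is structural: the proposition is true only because the weight in the norms is the \emph{same} $u$ that sits inside the operator (this is exactly the two-weight coupling $u$ versus $u^{1-p}$ in the original formulation). Extrapolating in an independent weight $W$ decouples the two and turns the claim into an upper square-function bound for arbitrary $A_\infty$ weights, which is known to fail; only the lower bound (Lemma \ref{lem:lem3}) is an $A_\infty$ phenomenon, the upper bound needs $A_p$-type control.

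Your second paragraph, read on its own, sketches only the single case $W=u$, $p_0=2$ (i.e.\ $p=s=2$) in the measure $\mu = u\,dx$; that case is plausible and the toolkit ($\langle \varphi\rangle_K/\langle u\rangle_K = \langle \varphi u^{-1}\rangle_K^{\mu}$, the pointwise bound by $M^{\mu}_{\calD}$, Proposition \ref{prop:prop1}, reverse H\"older) is the right one, but it cannot be fed into Lemma \ref{lem:extInfty}, so the general $(p,s)$ statement is never reached. Note also that this paper does not prove the proposition at all -- it is imported from \cite{LMV:gen} -- so there is no in-paper argument to compare with; but any correct proof must keep the weight $u$ coupled to the operator throughout, e.g.\ by running your $\mu$-framework analysis directly at general $p$ and $s$ (dualizing $L^p(\ell^s)$ against $L^{p'}(\ell^{s'})$ with respect to $u\,dx$ and using Proposition \ref{prop:prop1} and a Carleson-type embedding), rather than passing through an $A_\infty$-extrapolation in an unrelated weight.
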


\section{BMO spaces}
Let $\calD = \calD^1 \times \calD^2$ be a collection of dyadic rectangles on $\R^d = \R^{d_1} \times \R^{d_2}$.
For a function $b \in L^1_{\loc}$ and a bi-parameter weight $\nu \in A_{\infty}$ we define the
usual dyadic weighted little $\BMO$ norm of $b$ as follows:
$$
\|b\|_{\bmo(\nu)}:=\sup_{R \in \calD} \frac 1{\nu(R)}\int_R |b-\langle b\rangle_R|.
$$
In fact, the direct definition is not used that often and we will mostly invoke it through the following $H^1$-$\BMO$ type inequalities.
For $i = 1$ and $i=2$ we have
$$
|\langle b, f\rangle| \lesssim \| b\|_{\bmo(\nu)} \| S_{\calD^i}^i f \|_{L^1(\nu)} \lesssim \| b\|_{\bmo(\nu)} \| S_{\calD} f \|_{L^1(\nu)}.
$$
The first estimate follows from the one-parameter result \cite{Wu}, see e.g. \cite{HPW}. For the second inequality concerning square functions only see e.g. \cite{EA}*{Lemma 2.5}.

Often when a supremum is taken over rectangles we also have a formulation of the norm uniformly each parameter separately. We have
\begin{align}\label{eq:esssupBMO}
\|b\|_{\bmo(\nu)} \sim \max\big( \esssup_{x_1 \in \R^{d_1}} \|b(x_1,\cdot)\|_{\BMO(\nu(x_1, \cdot))}, \esssup_{x_2 \in \R^{d_2}}\|b(\cdot,x_2)\|_{\BMO(\nu(\cdot,x_2))}\big)
\end{align}
where $\|\cdot\|\BMO(\rho)$ is the standard one-parameter dyadic weighted BMO space. For proof see e.g. \cite{HPW}.

The following proposition gives an equivalent definition for the little $\BMO$ norm in Bloom type two-weight setting. The equivalent definition is needed for the proof of the lower bound of the commutator.
\begin{prop}\label{prop:bloomlittlebmo}
 Let $\nu,\sigma \in A_\infty.$ 
 If $\nu \sigma \in A_\infty,$
 then it holds
 $\bmo_\sigma(\nu) =  \bmo(\nu),$
 where  $$
 \bmo_\sigma(\nu) :=\{b \in L^1_{loc}\colon\sup_R \frac{1}{\nu\sigma(R)} \int_R |b- \ave{b}_R^\sigma| \sigma < \infty\}.
 $$
 \end{prop}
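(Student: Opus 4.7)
The plan is to prove both inclusions $\bmo(\nu)\subseteq\bmo_\sigma(\nu)$ and $\bmo_\sigma(\nu)\subseteq\bmo(\nu)$ with comparable norms. The starting point in both directions is the pair of elementary bounds
\[
\bigl|\langle b\rangle_R^\sigma - \langle b\rangle_R\bigr| \leq \frac{1}{\sigma(R)}\int_R|b-\langle b\rangle_R|\sigma \quad\textup{and}\quad \bigl|\langle b\rangle_R - \langle b\rangle_R^\sigma\bigr| \leq \frac{1}{|R|}\int_R|b-\langle b\rangle_R^\sigma|,
\]
which immediately yield
\[
\int_R|b-\langle b\rangle_R^\sigma|\sigma \leq 2\int_R|b-\langle b\rangle_R|\sigma \quad\textup{and}\quad \int_R|b-\langle b\rangle_R| \leq 2\int_R|b-\langle b\rangle_R^\sigma|.
\]
Thus each direction reduces to passing between a $\sigma$-integral and a Lebesgue integral of the oscillation on $R$.

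For $\bmo(\nu)\Rightarrow\bmo_\sigma(\nu)$, I pick $p<\infty$ with $\sigma\in RH_{p'}$ (possible since $\sigma\in A_\infty$) and apply H\"older:
\[
\int_R|b-\langle b\rangle_R|\sigma \leq \Bigl(\int_R|b-\langle b\rangle_R|^p\Bigr)^{1/p}\Bigl(\int_R\sigma^{p'}\Bigr)^{1/p'}.
\]
Weighted John--Nirenberg for $\bmo(\nu)$, accessed fibrewise through \eqref{eq:esssupBMO} and the classical one-parameter weighted statement (using $\nu\in A_\infty$), bounds the first factor by $\|b\|_{\bmo(\nu)}\nu(R)|R|^{1/p-1}$; reverse H\"older controls the second by $\sigma(R)|R|^{-1/p}$. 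Their product equals $\|b\|_{\bmo(\nu)}\ave{\nu}_R\ave{\sigma}_R|R|$, and the multilinear reverse H\"older (Lemma \ref{lem:RH}) applied to $\nu,\sigma\in A_\infty$ collapses this to $\lesssim\|b\|_{\bmo(\nu)}\nu\sigma(R)$, completing the inclusion.

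The hard part will be the reverse inclusion $\bmo_\sigma(\nu)\Rightarrow\bmo(\nu)$, because it forces an $L^s(\sigma)$-self-improvement of $\bmo_\sigma(\nu)$ itself. The dual H\"older gives, for $s>1$ chosen so that $\sigma^{1-s'}$ is controlled by the $A_q$-condition of $\sigma$ (valid since $\sigma\in A_\infty$, hence $\sigma\in A_q$ for some finite $q$),
\[
\int_R|b-\langle b\rangle_R^\sigma| \leq \Bigl(\int_R|b-\langle b\rangle_R^\sigma|^s\sigma\Bigr)^{1/s}\Bigl(\int_R\sigma^{1-s'}\Bigr)^{1/s'},
\]
and the key remaining ingredient, and the only place $\nu\sigma\in A_\infty$ is used essentially, is the John--Nirenberg-type self-improvement
\[
\Bigl(\frac{1}{\sigma(R)}\int_R|b-\langle b\rangle_R^\sigma|^s\sigma\Bigr)^{1/s} \lesssim \|b\|_{\bmo_\sigma(\nu)}\frac{\nu\sigma(R)}{\sigma(R)}.
\]
I would prove this by viewing $\bmo_\sigma(\nu)$ as a Bloom-type weighted BMO on the doubling measure space $(\R^d,\sigma\,dx)$ with governing weight $\nu$; a standard Calder\'on--Zygmund/good-$\lambda$ decomposition in that space delivers the improvement, and the $A_\infty$-type control of $\nu$ with respect to $\sigma\,dx$ that the argument demands is exactly what is furnished by the joint hypothesis $\nu,\sigma,\nu\sigma\in A_\infty$, via the averaging equivalence $\ave{\nu\sigma}_R\sim\ave{\nu}_R\ave{\sigma}_R$. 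Combining the two H\"older factors then yields $\int_R|b-\langle b\rangle_R^\sigma|\lesssim\|b\|_{\bmo_\sigma(\nu)}\nu(R)$, and the bi-parameter reduction goes through the fibrewise characterization \eqref{eq:esssupBMO} applied to both $\bmo(\nu)$ and the analogous fibrewise description of $\bmo_\sigma(\nu)$.
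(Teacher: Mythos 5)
Your two elementary reductions and the closing average comparison are fine (indeed $\ave{\nu}_R\ave{\sigma}_R\lesssim\ave{\nu\sigma}_R$ is Lemma \ref{lem:RH}, and the converse bound $\ave{\nu\sigma}_R\le[\nu\sigma]_{A_\infty}\ave{\nu}_R\ave{\sigma}_R$ follows from the $\exp$--$\log$ definition of $A_\infty$ and Jensen), but both directions of your argument hinge on John--Nirenberg--type self-improvements at exponents dictated by the \emph{other} weight, and these are false in the generality allowed by the hypotheses. Test your claimed inequalities with $b=\nu$, which lies in $\bmo(\nu)$ and in $\bmo_\sigma(\nu)$ with norm at most $2$: the bound $\big(\fint_R|b-\ave{b}_R|^p\big)^{1/p}\lesssim\|b\|_{\bmo(\nu)}\ave{\nu}_R$ forces $\nu\in RH_p$, while your choice of $p$ (so that $\sigma\in RH_{p'}$) forces $p$ at least as large as the dual of the reverse H\"older exponent of $\sigma$. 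Nothing in $\nu,\sigma,\nu\sigma\in A_\infty$ reconciles these: for $\nu(x)=\max(|x_1|^{-3/4},1)$ and $\sigma(x)=\max(|x_1-5|^{-3/4},1)$ (constant in the remaining variables) all three weights are in $A_\infty$, yet $\sigma\in RH_{p'}$ requires $p>4$ whereas $\nu\in RH_p$ requires $p<4/3$; so the ``classical weighted John--Nirenberg'' you invoke does not exist at any admissible exponent (the genuine weighted statements are exponential or of $L^p(\nu^{1-p})$ type, not unweighted $L^p$ at prescribed $p$). The same defect kills the harder direction: your key self-improvement, applied to $b=\nu$, says exactly that $\nu\in RH_s(\sigma\,dx)$, while local integrability of $\sigma^{1-s'}$ already pins $s$ to the $A_q$ exponent of $\sigma$; taking $\sigma=\min(|x_1|^{10},1)$ and $\nu=\max(|x_1-5|^{-3/4},1)$ gives admissible weights for which $s>11$ is forced but $\nu\in RH_s(\sigma\,dx)$ fails for all $s\ge 4/3$. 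So the central inequalities are not merely left unproved by the good-$\lambda$ sketch; they are false for weights satisfying $\nu,\sigma,\nu\sigma\in A_\infty$. (The fibrewise passage through \eqref{eq:esssupBMO} to rectangle oscillations also needs an argument, but that is a secondary issue.)

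The hypotheses are tailored to a different mechanism, which is the one the paper intends: adapt the one-parameter sparse argument of \cite{Li}, producing the sparse family inside a fixed rectangle $R$ by iterated bisection. One dominates the oscillation pointwise on $R$, e.g. $|b-\ave{b}_R|\lesssim\sum_{Q\in\calS}\big(\fint_Q|b-\ave{b}_Q|\big)1_Q$ for a (Lebesgue- or $\sigma$-) sparse family $\calS$ of subrectangles, so that after integrating against $\sigma\,dx$ (resp.\ $dx$) only single-scale averages appear. Then Lemma \ref{lem:RH} in one direction, and the $\exp$--$\log$ bound $\ave{\nu\sigma}_Q\lesssim\ave{\nu}_Q\ave{\sigma}_Q$ in the other, merge the averages, and the hypotheses $\nu\sigma\in A_\infty$ (resp.\ $\nu\in A_\infty$) enter only through the Carleson-type sums $\sum_{Q\in\calS}\nu\sigma(Q)\lesssim\nu\sigma(R)$, resp.\ $\sum_{Q\in\calS}\nu(Q)\lesssim\nu(R)$, valid because sparseness plus $A_\infty$ lets one pass to the disjoint major subsets $E_Q$. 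No fixed-exponent self-improvement is needed anywhere, which is precisely why the statement survives the examples above; I recommend restructuring your proof along these lines.
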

 The proof can be adapted from the one-parameter version (see, for example, \cite{Li}).  In our case, the sparse method poses no problems as it can be adapted to rectangles when the dyadic and sparse families inside of a rectangle $R$ are attained by iteratively bisecting the size of $R$. We omit the details.

We formulate the Muckenhoupt--Wheeden type estimates now.
\begin{lem}\label{lem:MW}
Let $a \in \BMO$ and $w \in A_\infty.$ It holds
$$
\sum_{I \in \calD} \ave{a,h_I} \ave{w}_I \varphi_I \lesssim \|a\|_{\BMO} \Big\|\Big(\sum_I \varphi_I^{2} \frac{1_I}{|I|}\Big)^\frac12\Big\|_{L^1(w)}.
$$
\end{lem}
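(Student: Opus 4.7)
The plan is to combine the unweighted $H^1$–$\BMO$ duality with the $A_\infty$ extrapolation of Lemma~\ref{lem:extInfty} and the multilinear reverse H\"older property of Lemma~\ref{lem:RH}.

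First, set $\Phi := \sum_{I\in\calD} \ave{w}_I \varphi_I h_I$. By Parseval, $\sum_I \ave{a,h_I}\ave{w}_I\varphi_I = \ave{a,\Phi}$, and since the Haar coefficients of $\Phi$ are precisely $\ave{w}_I\varphi_I$, one has $S_\calD\Phi=\big(\sum_I \ave{w}_I^2\varphi_I^2\,1_I/|I|\big)^{1/2}$. The unweighted $H^1$–$\BMO$ duality---namely the $\nu\equiv 1$ case of the inequality $|\ave{b,f}|\lesssim \|b\|_{\bmo(\nu)}\|S_\calD f\|_{L^1(\nu)}$ recorded earlier in the paper---gives
$$
\Big|\sum_{I}\ave{a,h_I}\ave{w}_I\varphi_I\Big|\lesssim \|a\|_{\BMO}\,\|S_\calD\Phi\|_{L^1},
$$
so it suffices to prove
$$
\|S_\calD\Phi\|_{L^1}\lesssim \int F\,w,\qquad F:=\Big(\sum_I \varphi_I^2 \frac{1_I}{|I|}\Big)^{1/2}.
$$

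Denote this last inequality as $\int f\lesssim \int g$ with $f:=S_\calD\Phi$ and $g:=Fw$ (both non-negative). By Lemma~\ref{lem:extInfty} ($A_\infty$ extrapolation for the pair $(f,g)$), it is enough to establish the $L^2$-version $\int f^2 v\lesssim \int g^2 v$ for every $v\in A_\infty$; applying the resulting extrapolated inequality at $p=1$ with $v\equiv 1\in A_\infty$ then delivers the target. Swapping sum and integral,
$$
\int f^2 v = \sum_I \varphi_I^2\,\ave{w}_I^2\,\ave{v}_I,\qquad \int g^2 v=\sum_I \varphi_I^2\,\ave{w^2v}_I,
$$
so everything reduces to the term-by-term comparison $\ave{w}_I^2\,\ave{v}_I\lesssim \ave{w^2v}_I$. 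This is precisely Lemma~\ref{lem:RH} applied with the two $A_\infty$ weights $w_1=w$, $w_2=v$ and positive exponents $u_1=2$, $u_2=1$.

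The main conceptual obstacle sidestepped by this route is that the naive pointwise bound $\ave{w}_I\le Mw(x)$ for $x\in I$ would force $w\in A_1$ (since it would demand $Mw\lesssim w$ pointwise), while only $w\in A_\infty$ is available. Extrapolation trades the unweighted $L^1$ inequality for an $L^2(v)$ estimate tested against arbitrary $A_\infty$ weights, and at that quadratic level the multiplicative rearrangement of weight averages supplied by Lemma~\ref{lem:RH} handles the passage cleanly, with no need to compare $Mw$ and $w$ pointwise.
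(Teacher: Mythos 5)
Your proof is correct and uses essentially the same ingredients as the paper, which obtains Lemma \ref{lem:MW} as the $\nu\equiv 1$, $\sigma=w$ case of Lemma \ref{lem:MWestimate}: $H^1$--$\BMO$ duality, the reverse H\"older property of Lemma \ref{lem:RH}, and the $A_\infty$ extrapolation of Lemma \ref{lem:extInfty}. The only (harmless) difference is packaging: you apply the scalar part of Lemma \ref{lem:extInfty} to the single pair $(S_{\calD}\Phi, Fw)$ with verification at $p_0=2$ via $\ave{w}_I^2\ave{v}_I\lesssim\ave{w^2v}_I$, whereas the paper verifies a termwise $L^1(u)$ bound via $\ave{\sigma}_R\ave{u}_R\lesssim\ave{\sigma u}_R$ and invokes the vector-valued second part of the extrapolation lemma with $q=2$, $p=1$.
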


In particular the above one is a special case of the two-weight version. We state this as a little bmo version.
\begin{lem}\label{lem:MWestimate}
Let $\sigma, \nu \in A_\infty.$
Assume that $b \in \bmo(\nu).$
Then we have
$$
\sum_{R = R^1 \times R^2} \ave{b, h_R} \ave{\sigma}_{R} \varphi_R \lesssim \|b\|_{\bmo(\nu)}\Big\|\Big(\sum_{R} \varphi_R^2 \frac{1_R}{|R|} \Big)^\frac12\Big\|_{L^1(\sigma \nu)}.
$$
Also, we have
$$
\sum_{R = R^1 \times R^2} \Ave{b, h_{R^1} \otimes \frac{1_{R^2}}{|R^2|}} \ave{\sigma}_{R} \varphi_R \lesssim \|b\|_{\bmo(\nu)}\Big\|\sum_{R^2}\Big(\sum_{R^1} \varphi_R^2 \frac{1_{R^1}}{|R^1|} \Big)^\frac12 \otimes \frac{1_{R^2}}{|R^2|}\Big\|_{L^1(\sigma \nu)}
$$
with a similar estimate when the cancellation is on the second parameter. 
\end{lem}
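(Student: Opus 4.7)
The plan is to recast the sum as a dual pairing $\pair{b}{\phi}$, apply the bi-parameter $H^1$-$\bmo(\nu)$ duality recalled at the start of the present section, and then transfer the factor $\ave{\sigma}_R$ from inside the resulting square function to an outer $\sigma\nu$-weight via $A_\infty$-extrapolation.

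Setting $\phi := \sum_R \ave{\sigma}_R \varphi_R h_R$, I have $\sum_R \ave{b,h_R}\ave{\sigma}_R\varphi_R = \pair{b}{\phi}$, and the $H^1$-$\bmo(\nu)$ duality together with Haar orthogonality gives
$$|\pair{b}{\phi}| \lesssim \|b\|_{\bmo(\nu)}\,\|G\|_{L^1(\nu)}, \qquad G := \Big(\sum_R \ave{\sigma}_R^2 \varphi_R^2 \tfrac{1_R}{|R|}\Big)^{1/2}.$$
Writing $F := (\sum_R \varphi_R^2 \tfrac{1_R}{|R|})^{1/2}$, it remains to prove the transfer estimate $\|G\|_{L^1(\nu)} \lesssim \|F\sigma\|_{L^1(\nu)}$. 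I would obtain this via the $A_\infty$-extrapolation of Lemma \ref{lem:extInfty} applied to the pair $(G, F\sigma)$: the $L^2(w)$ bound for every $w\in A_\infty$ extrapolates to the required $L^1(\nu)$ bound. The $L^2(w)$ estimate is immediate from Fubini, which expands
$$\int G^2\,w = \sum_R \ave{\sigma}_R^2\,\ave{w}_R\,\varphi_R^2, \qquad \int (F\sigma)^2\,w = \sum_R \ave{\sigma^2 w}_R\,\varphi_R^2,$$
and reduces everything to the term-wise inequality $\ave{\sigma}_R^2\, \ave{w}_R \lesssim \ave{\sigma^2 w}_R$, which is the multilinear reverse H\"older inequality (Lemma \ref{lem:RH}) applied to the tuple $(\sigma,\sigma,w)\in A_\infty^3$.

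For the second bound, where the cancellation is carried only by $h_{R^1}$, the same strategy applies with $\phi := \sum_R \ave{\sigma}_R \varphi_R \big(h_{R^1}\otimes |R^2|^{-1}1_{R^2}\big)$. Computing $S^1\phi$ and invoking Minkowski's inequality in the $R^2$-sum produces exactly the $\ell^1(R^2)$-$\ell^2(R^1)$ structure appearing on the right-hand side, with each $\varphi_R$ replaced by $\ave{\sigma}_R\varphi_R$. For each fixed $R^2$, a one-parameter analogue of the transfer step, namely $A_\infty$-extrapolation in $x_1$ with the partial averages $\ave{\sigma}_{R^2,2}(x_1)$ and $\ave{\nu}_{R^2,2}(x_1)$, which are one-parameter $A_\infty$ weights uniformly in $R^2$ by \eqref{eq:eq28}, combined with the pointwise estimate $\ave{\sigma}_{R^2,2}(x_1)\ave{\nu}_{R^2,2}(x_1)\lesssim \ave{\sigma\nu}_{R^2,2}(x_1)$ and Fubini closes the proof.

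The main obstacle is precisely the transfer step: a naive pointwise bound $\ave{\sigma}_R 1_R \le M_\calD \sigma$ would require $\sigma\in A_1$, which is not assumed. Working at the $L^2$-level lets Lemma \ref{lem:RH} be applied exactly once per rectangle, sidestepping the difficulty entirely, and $A_\infty$-extrapolation then descends the estimate back to the $L^1(\nu)$-formulation demanded by the $H^1$-$\bmo(\nu)$ duality.
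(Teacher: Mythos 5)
Your proof is correct and takes essentially the same route as the paper: both pass through the $H^1$--$\bmo(\nu)$ duality and then transfer $\ave{\sigma}_R$ to an exterior $\sigma$ by combining the reverse H\"older estimate of Lemma \ref{lem:RH} with the $A_\infty$ extrapolation of Lemma \ref{lem:extInfty}. The only differences are bookkeeping: you verify the extrapolation hypothesis at $p_0=2$ for the full square functions via Fubini, whereas the paper invokes the vector-valued form with per-rectangle pairs at $p_0=1$; and in the second bound the uniform one-parameter $A_\infty$ property of $\ave{\sigma}_{R^2,2}$ and $\ave{\nu}_{R^2,2}$ is really the averaged-weight fact recorded in Section \ref{sec:weights} rather than \eqref{eq:eq28} itself.
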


%\begin{rem}
%Above lemma is a merely different formulation of \cite{Li}*{Lemma 2.13}.
%\end{rem}

\begin{proof}
Let us consider the first estimate above and use the duality
\begin{align*}
\sum_{R = R^1 \times R^2} \ave{b, h_R} \ave{\sigma}_{R} \varphi_R \lesssim \|b\|_{\bmo(\nu)}\int \Big( \sum_{R} \varphi_R^2 \ave{\sigma}_R^2 \frac{1_R}{|R|}\Big)^\frac12 \nu.
\end{align*}
By the reverse H\"older property of $A_\infty$ weights, Lemma \ref{lem:RH}, we have
$$
\ave{\sigma}_{R}\ave{\nu}_R \lesssim \ave{\sigma \nu}_R.
$$
Hence, for all $R \in \calD$ we have
$$
\int \varphi_R \ave{\sigma}_R \frac{1_R}{|R|} \nu \lesssim \int \varphi_R \frac{1_R}{|R|} \sigma \nu.
$$

The second part of the extrapolation result, Lemma \ref{lem:extInfty}, yields that
$$
\int \Big( \sum_{R} \varphi_R^2 \ave{\sigma}_R^2 \frac{1_R}{|R|}\Big)^{\frac 12} \nu \lesssim \int \Big( \sum_{R} \varphi_R^2 \frac{1_R}{|R|}\Big)^\frac12 \sigma\nu
$$
as desired. 

For the second claim observe that, for example, we have
\begin{align*}
\sum_{R = R^1 \times R^2} \Ave{b, h_{R^1} \otimes \frac{1_{R^2}}{|R^2|}} \ave{\sigma}_{R} \varphi_R &= \int_{\R^{d_2}} \sum_{R^2} \sum_{R^1}\ave{b, h_{R^1}}  \ave{\sigma}_{R} \varphi_R \frac{1_{R^2}}{|R^2|} \\
&\lesssim \|b\|_{\bmo(\nu)} \int_{\R^d} \sum_{R^2} \Big(\sum_{R^1} \varphi_R^2 \ave{\sigma}_{R}^2 \frac{1_{R^1}}{|R^1|} \Big)^{\frac 12} \otimes \frac{1_{R^2}}{|R^2|} \nu,
\end{align*}
where we use the one-parameter duality for fixed variable on the second parameter. The proof is concluded as above.
\end{proof}

Using characterizations \eqref{eq:esssupBMO} and  \eqref{eq:eq28}, we have
\begin{lem}\label{lem:MWuniform}
Let $\sigma, \nu \in A_\infty.$
Assume that $b \in \bmo(\nu).$
For a fixed variable $x_1 \in \R^{d_1},$ we have
$$
\sum_{R^2} \ave{b_{x_1}, h_{R^2}} \ave{\sigma_{x_1}}_{R^2} \varphi_{R^2} \lesssim \|b\|_{\bmo(\nu)} \Big\|\Big(\sum_{R^2} \varphi_{R^2}^2 \frac{1_{R^2}}{|R^2|} \Big)^\frac12\Big\|_{L_{x_2}^1(\sigma_{x_1} \nu_{x_1})},
$$
where $g_{x_1}$ denotes the one parameter function $g(x_1, \cdot).$ We have a similar estimate for a fixed variable on $\R^{d_2}.$
\end{lem}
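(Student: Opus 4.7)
The plan is to reduce the claim, for each fixed $x_1$, to a one-parameter version of Lemma \ref{lem:MWestimate} with constants that are uniform in $x_1$. The two characterizations \eqref{eq:esssupBMO} and \eqref{eq:eq28} are what make this reduction possible.

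First, by \eqref{eq:esssupBMO}, for a.e.\ $x_1 \in \R^{d_1}$ the slice $b(x_1,\cdot)$ belongs to the one-parameter weighted space $\BMO(\nu(x_1,\cdot))$ with
\[
\|b(x_1,\cdot)\|_{\BMO(\nu(x_1,\cdot))} \lesssim \|b\|_{\bmo(\nu)}.
\]
Second, since $A_\infty = \bigcup_{1 < p < \infty} A_p$ and \eqref{eq:eq28} propagates $A_p$-constants from the bi-parameter weight to the slice, we also obtain that for a.e.\ $x_1$ both $\sigma(x_1,\cdot)$ and $\nu(x_1,\cdot)$ lie in $A_\infty(\R^{d_2})$ with constants bounded uniformly in $x_1$ by $[\sigma]_{A_\infty}$ and $[\nu]_{A_\infty}$ respectively.

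With these ingredients, the argument proceeds exactly as in Lemma \ref{lem:MWestimate} but in the single parameter $\R^{d_2}$. First, the one-parameter weighted $H^1$--$\BMO$ duality (cited after the definition of $\bmo(\nu)$) applied to $b(x_1,\cdot)$ gives
\[
\sum_{R^2} \ave{b_{x_1}, h_{R^2}}\ave{\sigma_{x_1}}_{R^2} \varphi_{R^2}
\lesssim \|b\|_{\bmo(\nu)} \int_{\R^{d_2}} \Big(\sum_{R^2} \varphi_{R^2}^2 \ave{\sigma_{x_1}}_{R^2}^2 \frac{1_{R^2}}{|R^2|}\Big)^{\frac12} \nu(x_1,x_2)\,dx_2.
\]
Next, the one-parameter reverse Hölder inequality of Lemma \ref{lem:RH} for the $A_\infty$ slices yields $\ave{\sigma_{x_1}}_{R^2}\ave{\nu_{x_1}}_{R^2} \lesssim \ave{\sigma_{x_1}\nu_{x_1}}_{R^2}$, hence for every single $R^2$,
\[
\int \varphi_{R^2} \ave{\sigma_{x_1}}_{R^2} \frac{1_{R^2}}{|R^2|} \nu_{x_1} \lesssim \int \varphi_{R^2} \frac{1_{R^2}}{|R^2|} \sigma_{x_1}\nu_{x_1}.
\]
Finally, the vector-valued part of the $A_\infty$-extrapolation result Lemma \ref{lem:extInfty}, applied on $\R^{d_2}$ with the pairs indexed by $R^2$, promotes this scalar pairwise inequality into the square function estimate
\[
\int_{\R^{d_2}} \Big(\sum_{R^2} \varphi_{R^2}^2 \ave{\sigma_{x_1}}_{R^2}^2 \frac{1_{R^2}}{|R^2|}\Big)^{\frac12} \nu_{x_1}
\lesssim \int_{\R^{d_2}} \Big(\sum_{R^2} \varphi_{R^2}^2 \frac{1_{R^2}}{|R^2|}\Big)^{\frac12} \sigma_{x_1}\nu_{x_1},
\]
which is the desired bound. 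The analogous statement with a fixed $x_2 \in \R^{d_2}$ is handled by symmetry.

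The only genuine point to check is that all implicit constants are controlled uniformly in $x_1$; this is precisely guaranteed by \eqref{eq:esssupBMO} and \eqref{eq:eq28}, so no new difficulty arises beyond recycling the proof of Lemma \ref{lem:MWestimate} in a single parameter.
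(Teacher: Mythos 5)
Your proof is correct and follows exactly the route the paper intends: it uses \eqref{eq:esssupBMO} and \eqref{eq:eq28} to get uniform (in $x_1$) slice control of the $\bmo(\nu)$ norm and of the $A_\infty$ constants, and then reruns the proof of Lemma \ref{lem:MWestimate} in one parameter via $H^1$--$\BMO$ duality, the reverse H\"older property, and the $A_\infty$ extrapolation of Lemma \ref{lem:extInfty}. This matches the paper's (omitted, "analogous to the previous one") argument, so nothing further is needed.
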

We omit the proof as it is analogous to the previous one.

\section{Multilinear bi-parameter singular integrals}
We call a function $\omega$ as a modulus of continuity if it is an increasing and subadditive function with $\omega(0) = 0$. 
A relevant quantity is
the modified Dini condition
\begin{equation*}
\|\omega\|_{\operatorname{Dini}_{\alpha}} := \int_0^1 \omega(t) \Big( 1 + \log \frac{1}{t} \Big)^{\alpha} \frac{dt}{t}, \qquad \alpha \ge 0
\end{equation*}
that appears in practise as follows
$$
\sum_{k=1}^\infty \omega(2^{-k})k^\alpha = \sum_{k=1}^\infty \frac{1}{\log 2} \int_{2^{-k}}^{2^{-k+1}} \omega(2^{-k})k^\alpha \frac{\ud t}{t}\lesssim \int_0^1 \omega(t) \Big( 1 + \log \frac{1}{t} \Big)^{\alpha} \frac{dt}{t}.
$$

\subsection{Bi-parameter SIOs}
We consider an $n$-linear operator $T$ on $\R^d = \R^{d_1}\times\R^{d_2}.$ 
Let $\omega_i$ be a modulus of continuity on $\R^{d_i}$. We define that $T$ is an $n$-linear bi-parameter $(\omega_1, \omega_2)$-SIO if it satisfies the full and partial kernel representations as defined below.

\subsubsection*{Full kernel representation}
Let  $f_i = f_i^1 \otimes f_i^2, i= 1,\ldots,n+1.$ For both $m \in \{1,2\}$ there exists $i_1, i_2 \in \{1, \ldots, n+1\}$ so that
$\operatorname{spt} f_{i_1}^m \cap \operatorname{spt} f_{i_2}^m = \emptyset$.
We demand that in this case we have the representation
$$
\langle T(f_1, \ldots, f_n), f_{n+1}\rangle = \int_{\R^{(n+1)d}}  K(x_{n+1},x_1, \dots, x_n)\prod_{i=1}^{n+1} f_i(x_i) \ud x,
$$
where
$$
K \colon \R^{(n+1)d} \setminus \{ (x_{n+1}, x_1, \ldots, x_{n}) \in \R^{(n+1)d}\colon x_1^1 = \cdots =  x_{n+1}^1 \textup{ or }  x_1^2 = \cdots =  x_{n+1}^2\} \to \C
$$
is a kernel satisfying a set of estimates which we specify next. 
The kernel $K$ is assumed to satisfy the size estimate
\begin{displaymath}
|K(x_{n+1},x_1, \dots, x_n)| \lesssim \prod_{m=1}^2 \frac{1}{\Big(\sum_{i=1}^{n} |x_{n+1}^m-x_i^m|\Big)^{d_mn}}.
\end{displaymath}

In addition, we require the continuity estimate, for example, we demand that
\begin{align*}
|K(x_{n+1}, x_1, \ldots, x_n)-&K(x_{n+1},x_1, \dots, x_{n-1}, (c^1,x^2_n))\\
&-K((x_{n+1}^1,c^2),x_1, \dots, x_n)+K((x_{n+1}^1,c^2),x_1, \dots, x_{n-1},  (c^1,x^2_n))| \\
&\qquad \lesssim \omega_1 \Big( \frac{|x_{n}^1-c^1| }{ \sum_{i=1}^{n} |x_{n+1}^1-x_i^1|} \Big) 
\frac{1}{\Big(\sum_{i=1}^{n} |x_{n+1}^1-x_i^1|\Big)^{d_1n}} \\
&\qquad\times
\omega_2 \Big( \frac{|x_{n+1}^2-c^2| }{ \sum_{i=1}^{n} |x_{n+1}^2-x_i^2|} \Big) 
\frac{1}{\Big(\sum_{i=1}^{n} |x_{n+1}^2-x_i^2|\Big)^{d_2n}}
\end{align*}
whenever $|x_n^1-c^1| \le 2^{-1} \max_{1 \le i \le n} |x_{n+1}^1-x_i^1|$
and $|x_{n+1}^2-c^2| \le 2^{-1} \max_{1 \le i \le n} |x_{n+1}^2-x_i^2|$.
Of course, we also require all the other natural symmetric estimates, where $c^1$ can be in any of the given $n+1$ slots and similarly for $c^2$. There
are, of course, $(n+1)^2$ different estimates.

Moreover, we expect to have the following mixed continuity and size estimates. For example, we demand that
\begin{align*}
|K(x_{n+1}&, x_1, \ldots, x_n)-K(x_{n+1},x_1, \dots, x_{n-1}, (c^1,x^2_n))| \\
& \lesssim \omega_1 \Big( \frac{|x_{n}^1-c^1| }{ \sum_{i=1}^{n} |x_{n+1}^1-x_i^1|} \Big) 
\frac{1}{\Big(\sum_{i=1}^{n} |x_{n+1}^1-x_i^1|\Big)^{d_1n}} \cdot  \frac{1}{\Big(\sum_{i=1}^{n} |x_{n+1}^2-x_i^2|\Big)^{d_2n}}
\end{align*}
whenever $|x_n^1-c^1| \le 2^{-1} \max_{1 \le i \le n} |x_{n+1}^1-x_i^1|$. Again, we also require all the other natural symmetric estimates.

\subsubsection*{Partial kernel representations}
Suppose now only that there exists $i_1, i_2 \in \{1, \ldots, n+1\}$ so that
$\operatorname{spt} f_{i_1}^1 \cap \operatorname{spt} f_{i_2}^1 = \emptyset$.
 Then we assume that
$$
\langle T(f_1, \ldots, f_n), f_{n+1}\rangle = \int_{\R^{(n+1)d_1}} K_{(f_i^2)}(x_{n+1}^1, x_1^1, \ldots, x_n^1) \prod_{i=1}^{n+1} f_i^1(x^1_i) \ud x^1,
$$
where $K_{(f_i^2)}$ is a one-parameter $\omega_1$-Calder\'on--Zygmund kernel with a constant depending on the fixed functions $f_1^2, \ldots, f_{n+1}^2$.
For example, this means that the size estimate takes the form
$$
|K_{(f_i^2)}(x_{n+1}^1, x_1^1, \ldots, x_n^1)| \le C(f_1^2, \ldots, f_{n+1}^2) \frac{1}{\Big(\sum_{i=1}^{n} |x_{n+1}^1-x_i^1|\Big)^{d_1n}}.
$$
The continuity estimates are analogous.

We assume the following $T1$ type control on the constant $C(f_1^2, \ldots, f_{n+1}^2)$. We have
\begin{equation}\label{eq:PKWBP}
C(1_{I^2}, \ldots, 1_{I^2}) \lesssim |I^2|
\end{equation}
and
$$
C(a_{I^2}, 1_{I^2}, \ldots, 1_{I^2}) + C(1_{I^2}, a_{I^2}, 1_{I^2}, \ldots, 1_{I^2}) + \cdots + C(1_{I^2}, \ldots, 1_{I^2}, a_{I^2}) \lesssim |I^2|
$$
for all cubes $I^2 \subset \R^{d_2}$
and all functions $a_{I^2}$ satisfying $a_{I^2} = 1_{I^2}a_{I^2}$, $|a_{I^2}| \le 1$ and $\int a_{I^2} = 0$.

Analogous partial kernel representation on the second parameter is assumed when $\operatorname{spt} f_{i_1}^2 \cap \operatorname{spt} f_{i_2}^2 = \emptyset$
for some $i_1, i_2$.

\subsection{Multilinear bi-parameter Calder\'on-Zygmund operators}
We say that $T$ satisfies the weak boundedness property if
\begin{equation}\label{eq:2ParWBP}
|\langle T(1_R, \ldots, 1_R), 1_R \rangle| \lesssim |R|
\end{equation}
for all rectangles $R = I^1 \times I^2 \subset \R^{d} = \R^{d_1} \times \R^{d_2}$.

An SIO $T$ satisfies the diagonal BMO assumption if the following holds. For all rectangles $R = I^1 \times I^2 \subset \R^{d} = \R^{d_1} \times \R^{d_2}$
and functions $a_{I^i}$ with $a_{I^i} = 1_{I^i}a_{I^i}$, $|a_{I^i}| \le 1$ and $\int a_{I^i} = 0$ we have
\begin{equation}\label{eq:DiagBMO}
|\langle T(a_{I^1} \otimes 1_{I^2}, 1_R, \ldots, 1_R), 1_R \rangle| + \cdots +  |\langle T(1_R, \ldots, 1_R), a_{I^1} \otimes 1_{I^2} \rangle| \lesssim |R|
\end{equation}
and
$$
|\langle T(1_{I^1} \otimes a_{I^2}, 1_R, \ldots, 1_R), 1_R \rangle| + \cdots +  |\langle T(1_R, \ldots, 1_R), 1_{I^1} \otimes a_{I^2} \rangle| \lesssim |R|.
$$

An SIO $T$ satisfies the product BMO assumption if it holds
$$S(1,\cdots,1) \in \BMO_{\textup{prod}}$$ for all the $(n+1)^2$ adjoints $S = T^{j_1*, j_2*}_{1,2}$.
This can be interpreted in the sense that
$$
\| S(1,\cdots, 1) \|_{\BMO_{\operatorname{prod}}} = \sup_{\calD = \calD^1 \times \calD^2} \sup_{\Omega} \Big(\frac{1}{|\Omega|} \sum_{ \substack{ R = I^1 \times I^2 \in \calD \\
R \subset \Omega}} |\langle S(1,\cdots, 1), h_R \rangle|^2 \Big)^{1/2} < \infty,
$$
where $h_R = h_{I^1} \otimes h_{I^2}$ and the supremum is over all dyadic grids $\calD^i$ on $\R^{d_i}$ and
open sets $\Omega \subset \R^d = \R^{d_1} \times \R^{d_2}$ with $0 < |\Omega| < \infty$, and the pairings
$\langle S(1,\cdots,1), h_R\rangle$ can be defined, in a natural way, using the kernel representations.

\begin{defn}\label{defn:CZO}
An $n$-linear  bi-parameter $(\omega_1, \omega_2)$-SIO $T$
satisfying the weak boundedness property, the diagonal BMO assumption and the product BMO assumption is called an $n$-linear bi-parameter
$(\omega_1, \omega_2)$-Calder\'on--Zygmund operator ($(\omega_1, \omega_2)$-CZO). 
\end{defn}

We simplify the study of above operators through the following representation theorem.
\begin{prop}\label{prop:Representation}
Suppose $T$ is an $n$-linear bi-parameter $(\omega_1, \omega_2)$-CZO.
Then we have
$$
\langle T(f_1,\ldots,f_n), f_{n+1} \rangle= C_T \E_{\sigma} \sum_{u = (u_1, u_2) \in \N^2}  \omega_1(2^{-u_1})\omega_2(2^{-u_2}) \langle U_{u, \sigma}(f_1,\ldots,f_n), f_{n+1} \rangle,
$$
where $C_T$ enjoys a linear bound with respect to the CZO quantities and
$U_{u, \sigma}$ denotes some $n$-linear bi-parameter dyadic operator (defined in the grid $\calD_{\sigma}$) with the following property. We have that $U_u = U_{u, \sigma}$ can be 
decomposed using the standard dyadic model operators as follows:
\begin{equation}\label{eq:eq10}
U_{u}
= C \sum_{i_1=0}^{u_1-1} \sum_{i_2=0}^{u_2-1} V_{i_1,i_2},
\end{equation}
where each $V = V_{i_1,i_2}$ is a dyadic model operator (a shift, a partial paraproduct or a full paraproduct)
of complexity $k^m_{j, V}$, $j \in \{1, \ldots, n+1\}$, $m \in \{1,2\}$,
satisfying 
$$
k^{m}_{j, V} \le u_m.
$$
\end{prop}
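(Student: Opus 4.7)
The plan is to mimic the classical Hyt\"onen representation theorem, now extended to the $n$-linear bi-parameter setting with the modulus of continuity encoded via $\omega_1,\omega_2$. The combinatorial skeleton is the same as in the linear and multilinear one-parameter cases, but the bookkeeping doubles because every scale relation and every ``bad vs good'' alternative occurs independently in each parameter.

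First I would set up two independent families of random dyadic grids $\calD_\sigma^1,\calD_\sigma^2$ on $\R^{d_1},\R^{d_2}$ (shifted translations of a fixed grid) and introduce the Nazarov--Treil--Volberg notion of goodness separately in each parameter with parameters $r_1,r_2$ large enough (depending on $\omega_1,\omega_2$). By the usual averaging trick one may assume, at the price of a constant depending only on the goodness probability, that the smallest cube appearing in each parameter is good. Next I would Haar-expand each $f_i$ in the bi-parameter Haar basis of $\calD_\sigma:=\calD_\sigma^1\times\calD_\sigma^2$, so that
\[
\langle T(f_1,\ldots,f_n),f_{n+1}\rangle
= \sum_{R_1,\ldots,R_{n+1}} \langle T(h_{R_1},\ldots,h_{R_n}),h_{R_{n+1}}\rangle \prod_{i=1}^{n+1}\langle f_i,h_{R_i}\rangle,
\]
where $R_i=I_i^1\times I_i^2$ and cancellative Haar functions are used whenever applicable.

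Second, I would classify each $(n+1)$-tuple $(R_1,\dots,R_{n+1})$ by the relative side lengths and relative positions in \emph{each} parameter separately. In parameter $m\in\{1,2\}$ there are essentially three regimes: (a) the smallest cube $I_{j_m}^m$ is well separated from the largest one; (b) all $I_i^m$ share a common dyadic ancestor of bounded relative size; (c) one cube contains all others and the smallest is adjacent to or inside the largest. Regime (a) is handled by the full or partial kernel representation and contributes a shift-type matrix coefficient; the kernel H\"older estimates yield a geometric decay factor $\omega_m(2^{-u_m})$ where $u_m$ measures the logarithmic gap between the smallest and largest scales in parameter $m$. Regime (c) activates a paraproduct: non-cancellative averages must be replaced, using the WBP \eqref{eq:2ParWBP}, the diagonal BMO condition \eqref{eq:DiagBMO} and ultimately the product BMO assumption on all $(n+1)^2$ adjoints, producing full paraproducts; mixed combinations between the two parameters produce partial paraproducts (cancellative in one parameter, paraproduct in the other). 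Regime (b) can be absorbed into (a) by passing to common parents, which also forces the goodness restriction to be used.

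Third, I would package the result. Fixing the gap vector $u=(u_1,u_2)$, sum all contributions whose scale gaps in parameters $1$ and $2$ equal $u_1$ and $u_2$; this produces a single operator $U_{u,\sigma}$ acting in $\calD_\sigma$ whose matrix coefficients are uniformly bounded after extracting the prefactor $\omega_1(2^{-u_1})\omega_2(2^{-u_2})$. A further splitting according to which slot hosts the paraproduct (if any) in each parameter gives the decomposition \eqref{eq:eq10} into shifts, partial paraproducts and full paraproducts. The bookkeeping forces the natural complexity bounds $k^m_{j,V}\le u_m$: indeed, the complexity in parameter $m$ is at most the scale gap $u_m$ by construction, and within a fixed $(u_1,u_2)$ the number of admissible $(i_1,i_2)\in\{0,\ldots,u_1-1\}\times\{0,\ldots,u_2-1\}$ dyadic-model-operator pieces is exactly what appears in \eqref{eq:eq10}. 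Averaging over the random shifts $\sigma$ recovers $T$.

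The main obstacle is the case analysis in the mixed regime where parameter $1$ sits in one of (a)--(c) while parameter $2$ sits in a different one: there one must invoke the \emph{partial} kernel representation in the separated parameter together with the appropriate $T1$/WBP/BMO assumption in the non-separated parameter. Checking that the constants $C(\cdots)$ from the partial kernels indeed satisfy \eqref{eq:PKWBP} uniformly (so that the mixed continuity--size estimates give the single $\omega_m(2^{-u_m})$ factor while the other parameter produces a bounded paraproduct coefficient) is the key technical point. Once this is verified, the rest is the standard NTV/Hyt\"onen bookkeeping, implemented independently in the two parameters, and the proposition follows.
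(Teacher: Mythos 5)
The paper does not reprove this proposition at all: it is quoted as a direct consequence of the representation theorem of \cite{AMV} (Theorem 5.35 there), which is stated in terms of \emph{modified} dyadic model operators built from martingale difference blocks, combined with \cite{AMV}*{Lemma 5.12}, which says that a modified operator of complexity $(u_1,u_2)$ can be expanded as a sum of roughly $u_1u_2$ \emph{standard} shifts/partial paraproducts/full paraproducts of complexities at most $u_m$ in parameter $m$ (this expansion is exactly what produces the double sum in \eqref{eq:eq10}, and it is also the source of the kernel-regularity loss discussed after the proposition). Your proposal instead sketches a from-scratch NTV/Hyt\"onen-type argument directly in terms of standard operators, which is a legitimately different route in principle.

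The genuine gap is that the crux of the statement --- the specific structure \eqref{eq:eq10} together with the complexity bounds $k^m_{j,V}\le u_m$ and the clean prefactor $\omega_1(2^{-u_1})\omega_2(2^{-u_2})$ --- is asserted rather than derived. If you run the classical bookkeeping with standard Haar expansions and collect terms by the scale gaps $(u_1,u_2)$, you do not in any obvious way arrive at a sum of $u_1u_2$ standard model operators per gap vector; with a general modulus of continuity the naive collection produces either a bounded family of standard operators with coefficients that need extra summable decay (hence stronger Dini conditions than claimed), or, if organized efficiently, the modified block operators of \cite{AMV}, which must then be re-expanded --- and that re-expansion is precisely the nontrivial lemma you are implicitly using. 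Your phrase ``the number of admissible $(i_1,i_2)$ pieces is exactly what appears in \eqref{eq:eq10}'' has no justification in the sketch. The same applies to the two points you yourself flag but defer: the mixed regimes (separated in one parameter, nested/adjacent in the other) require the partial kernel representations with the $T1$-type control \eqref{eq:PKWBP}, the weak boundedness property and the diagonal BMO condition, and the paraproduct coefficients carry no $\omega$-decay, so one must check that they can nevertheless be placed inside the $u$-sum with uniformly normalized coefficients. As it stands the proposal is an outline of the known machinery that postpones exactly the steps that make the proposition true in the stated form; either carry out the modified-operator decomposition or, as the paper does, cite \cite{AMV} for it.
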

In above $\E_{\sigma}$ denotes the expectation over  a natural probability space $\Omega = \Omega_1 \times \Omega_2$, the details of which are not relevant for us here,
so that to each $\sigma = (\sigma_1, \sigma_2) \in \Omega$
we can associate a random collection of dyadic rectangles $\calD_{\sigma} = \calD_{\sigma_1} \times \calD_{\sigma_2}$. The proposition is a consequence of \cite{AMV}*{Theorem 5.35. and Lemma 5.12.}. 

It was proven in \cite{AMV} that the minimal regularity we require is that $\omega_i \in \text{Dini}_{\frac12}.$ For the optimal dependence the dyadic representation is in terms of certain modified model operators. The modified versions of the standard operators are much more difficult to handle and we are forced to rely on the lemma that these can be written as a sum of the standard ones. However,  as it is explained in \cite{AMV}, this will cause a loss in the kernel regularity. 
Yet another problem appears when dealing with the genuinely multilinear weights. Thus in some cases, we need to stick to the usual H\"older type kernel regularity $\omega_i(t) = t^{\alpha_i}.$ In the paper \cite{LMV:gen}, it was proven that the standard model operators are bounded with the weights on the genuinely multilinear weight class introduced earlier. We will move on to introducing the model operators and state the very recent results for these.

\subsection{Dyadic model operators}\label{sec:model}
All the operators in this section are defined in some fixed rectangles $\calD = \calD^1 \times \calD^2$. We do not emphasise this dependence in the notation. 
\subsection{Shifts}
Let $k=(k_1, \dots, k_{n+1})$, where $k_i = (k_i^1, k_i^2) \in \{0,1,\ldots\}^2$.
An $n$-linear bi-parameter shift $S_k$ takes the form
\begin{equation*}\label{eq:S2par}
\langle S_k(f_1, \ldots, f_n), f_{n+1}\rangle = \sum_{K} \sum_{\substack{R_1, \ldots, R_{n+1} \\ R_i^{(k_i)} = K }}
a_{K, (R_i)} \prod_{i=1}^{n+1} \langle f_i, \wt h_{R_i} \rangle.
\end{equation*}
Here $K, R_1, \ldots, R_{n+1} \in \calD = \calD^1 \times \calD^2$, $R_i = I_i^1 \times I_i^2$, $R_i^{(k_i)} := (I_i^1)^{(k_i^1)} \times (I_i^2)^{(k_i^2)}$ and
$\wt h_{R_i} = \wt h_{I_i^1} \otimes \wt h_{I_i^2}$. Here we assume that for $m \in \{1,2\}$
there exist two indices $i^m_0,i_1^m \in \{1, \ldots, n+1\}$, $i^m_0 \not =i^m_1$, so that $\wt h_{I_{i^m_0}^m}=h_{I_{i^m_0}^m}$, $\wt h_{I_{i^m_1}^m}=h_{I_{i^m_1}^m}$ and for the remaining indices $i \not \in \{i^m_0, i^m_1\}$ we have $\wt h_{I_i^m} \in \{h_{I_i^m}^0, h_{I_i^m}\}$.
Moreover, $a_{K,(R_i)} = a_{K, R_1, \ldots ,R_{n+1}}$ is a scalar satisfying the normalization
\begin{equation}\label{eq:Snorm2par}
|a_{K,(R_i)}| \le \frac{\prod_{i=1}^{n+1} |R_i|^{1/2}}{|K|^{n}}.
\end{equation}

\begin{thm}[\cite{LMV:gen}*{Theorem 6.2.}]\label{thm:shift}
Suppose $S_k$ is an $n$-linear bi-parameter shift, $1 < p_1, \ldots, p_n, \le \infty$ and $\frac{1}{p} = \sum_{i=1}^n \frac{1}{p_i}> 0$. Then we have
$$
\|S_k(f_1, \ldots, f_n)w \|_{L^p} \lesssim \prod_{i=1}^n \|f_i w_i\|_{L^{p_i}}
$$
for all multilinear bi-parameter weights $\vec w \in A_{\vec p}$. The implicit constant does not depend on $k$.
\end{thm}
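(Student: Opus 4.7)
The plan is to reduce $|\langle S_k(\vec f), f_{n+1}\rangle|$ to an integral pairing of the form $\int A_{j,k}(\vec g)\cdot h$ with $A_{j,k}$ one of the multilinear bi-parameter square functions of Section \ref{sec:SF}, and then invoke Theorem \ref{thm:multilinSF} together with H\"older's inequality and the duality Lemma \ref{lem:lem7}.

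\emph{Step 1 (pointwise reduction).} Using the normalisation \eqref{eq:Snorm2par} we drop the coefficients to get
\[
|\langle S_k \vec f, f_{n+1}\rangle| \le \sum_{K\in\calD}\frac{1}{|K|^{n}}\sum_{R_i^{(k_i)}=K}\prod_{i=1}^{n+1}|R_i|^{1/2}|\langle f_i, \wt h_{R_i}\rangle|.
\]
In each parameter $m\in\{1,2\}$, exactly two indices $i_0^m, i_1^m$ carry a cancellative factor $h_{I_i^m}$; the remaining indices carry a factor from $\{h_{I_i^m}^0, h_{I_i^m}\}$. For a non-cancellative factor $h^0_{I_i^m}$, the quantity $|I_i^m|^{-1/2}|\langle f_i, h^0_{I_i^m}\rangle_m|$ is a one-parameter average of $|f_i|$; for a cancellative factor, Cauchy--Schwarz in $(I_i^m)^{(k_i^m)}=K^m$ converts the $R_i^m$-sum into the martingale-block quantity $\langle|\Delta^m_{K^m,k_i^m}f_i|^2\rangle_{K^m}^{1/2}$.

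\emph{Step 2 (assembly into square-function shape, and closure).} Performing Step 1 simultaneously in the four cancellative slots factorises the inner sum as a product of martingale blocks and averages at $K$, giving a bound of the form $\sum_K |K|\cdot(\text{square-function quantity at }K)$. A last Cauchy--Schwarz over $K$ pairs the $f_{n+1}$-block with the rest. Which of $A_{1,k}, A_{2,k}, A_{3,k}$ appears is dictated by the coincidence pattern of the pairs $\{i_0^1, i_1^1\}$ and $\{i_0^2, i_1^2\}$: coinciding pairs give (a symmetric form of) $A_{3,k}$, one shared index gives $A_{2,k}$, and disjoint pairs give $A_{1,k}$. After using the bi-parameter adjoints of Section 2.C to place at least one cancellation in each parameter on $f_{n+1}$, we arrive at
\[
|\langle S_k \vec f, f_{n+1}\rangle| \lesssim \int A_{j,k}(f_{\pi(1)},\ldots,f_{\pi(n)})\cdot|f_{\pi(n+1)}|
\]
for some $j\in\{1,2,3\}$ and permutation $\pi$. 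Applying H\"older with exponents $(p_{\pi(1)},\ldots,p_{\pi(n)}, p_{\pi(n+1)}')$, Theorem \ref{thm:multilinSF} for $A_{j,k}$, and Lemma \ref{lem:lem7} (which places the dual tuple $\vec w^{\,\pi(n+1)}$ in $A_{\vec p^{\,\pi(n+1)}}$) closes the estimate in the Banach range; the quasi-Banach regime is reached by extrapolation from the Banach range, or directly from Theorem \ref{thm:multilinSF}, which is already stated for $1/p>0$. The $k$-independence of the constant is inherited from Theorem \ref{thm:multilinSF}.

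The main obstacle is the bookkeeping in Step 2: one must carefully track how the four cancellative slots are distributed across the $n+1$ functions, use bi-parameter adjoints to place at least one cancellation in each parameter on the dual test function $f_{n+1}$, and then match the resulting sum to the correct symmetric form of $A_{j,k}$. Once this combinatorial matching is in place, the weight-side manipulations via Lemma \ref{lem:lem7} and the duality characterising $A_{\vec p}$ are routine.
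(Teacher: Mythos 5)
The paper does not actually prove this theorem -- it is imported verbatim from \cite{LMV:gen}*{Theorem 6.2} -- and your outline does follow the architecture on which that result rests (dualize the shift form, reduce to the square functions $A_{j,k}$, apply Theorem \ref{thm:multilinSF} with H\"older and Lemma \ref{lem:lem7}, extrapolate for $p\le 1$). However, your Step 1 contains a genuine flaw at exactly the step that carries the content of the proof. Cauchy--Schwarz over $\{I : I^{(k_i^m)}=K^m\}$ gives $\sum_{I^{(k)}=K^m}|I|^{1/2}|\langle f_i,h_I\rangle| \le |K^m|\,\langle|\Delta^m_{K^m,k}f_i|^2\rangle_{K^m}^{1/2}$, i.e.\ an $L^2$-average of the martingale block, whereas $A_{1,k}, A_{2,k}, A_{3,k}$ are built from the $L^1$-averages $\langle|\Delta_{K,k}f_i|\rangle_K$. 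Since $\langle|\Delta_{K,k}f_i|\rangle_K\le\langle|\Delta_{K,k}f_i|^2\rangle_K^{1/2}$, your reduction lands on a strictly larger object, so the claim ``we arrive at $\int A_{j,k}(\cdot)\,|f_{\pi(n+1)}|$'' does not follow. This is not cosmetic: the $L^2$-average variant is not bounded uniformly in the complexity. Already in the unweighted one-parameter prototype, testing on $f=h_J$ with $J^{(k)}=K$ gives $\langle|\Delta_{K,k}f|^2\rangle_K^{1/2}1_K=|K|^{-1/2}1_K$, whose $L^p$ norm exceeds $\|h_J\|_{L^p}$ by a factor $2^{kd(1/p-1/2)}$ for $p<2$ (and with weights one would need roughly $A_{p/2}$-type conditions), so the $k$-independence asserted in the theorem -- and needed for Proposition \ref{prop:Representation} and \eqref{eq:complx} -- is lost at this step. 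The correct reduction is the elementary bound $|\langle f_i,h_I\rangle|=|\langle \Delta_I f_i,h_I\rangle|\le |I|^{1/2}\langle|\Delta_I f_i|\rangle_I$, which after summing over $I^{(k)}=K^m$ yields $|K^m|\langle|\Delta^m_{K^m,k}f_i|\rangle_{K^m}$, i.e.\ exactly the $L^1$-averaged blocks entering $A_{j,k}$; no Cauchy--Schwarz in the inner sum should be used.

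Two smaller points. The adjoint manoeuvre is unnecessary and slightly misleading: the symmetric forms of $A_{j,k}$ are designed precisely so that you never have to relocate cancellations; you only choose which single function to peel off by H\"older. Moreover, when $n+1\le 4$ it can happen that every slot carries a cancellative Haar function, and then the peeled-off function appears through $\big(\sum_K|\Delta_{K,k}f|^2 1_K\big)^{1/2}=S_{\calD}f$ rather than $|f|$; this is fine, but it requires the weighted upper square-function bound for the dual weight (available since, by Lemmas \ref{lem:lem1} and \ref{lem:lem7}, that weight lies in an appropriate $A_q$ class), which your write-up does not address. Finally, for $p\le 1$ the dualization with $f_{n+1}\in L^{p'}$ is unavailable, so ``directly from Theorem \ref{thm:multilinSF}'' does not close the quasi-Banach range; there one should use the genuinely multilinear extrapolation of \cites{LMO,LMMOV}, as you also suggest.
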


\subsection{Partial paraproducts}
Let $k=(k_1, \dots, k_{n+1})$, where $k_i \in \{0,1,\ldots\}$.
An $n$-linear bi-parameter partial paraproduct $(S\pi)_k$ with the paraproduct component on $\R^{d_2}$ takes the form
\begin{equation}\label{eq:Spi}
\langle (S\pi)_k(f_1, \ldots, f_n), f_{n+1} \rangle =
\sum_{K = K^1 \times K^2} \sum_{\substack{ I^1_1, \ldots, I_{n+1}^1 \\ (I_i^1)^{(k_i)} = K^1}} a_{K, (I_i^1)} \prod_{i=1}^{n+1} \langle f_i, \wt h_{I_i^1} \otimes u_{i, K^2} \rangle,
\end{equation}
where the functions $\wt h_{I_i^1}$ and $u_{i, K^2}$ satisfy the following.
There are $i_0,i_1 \in \{1, \ldots, n+1\}$, $i_0 \not =i_1$, so that $\wt h_{I_{i_0}^1}=h_{I_{i_0}^1}$, $\wt h_{I_{i_1}^1}=h_{I_{i_1}^1}$ and for the remaining indices $i \not \in \{i_0, i_1\}$ we have $\wt h_{I_i^1} \in \{h_{I_i^1}^0, h_{I_i^1}\}$. There is $i_2 \in \{1, \ldots, n+1\}$ so that $u_{i_2, K^2} = h_{K^2}$ and for the remaining indices $i \ne i_2$ we have
$u_{i, K^2} = \frac{1_{K^2}}{|K^2|}$.
Moreover, the coefficients are assumed to satisfy
\begin{equation}\label{eq:PPNorma}
\| (a_{K, (I_i^1)})_{K^2} \|_{\BMO} = \sup_{K^2_0 \in \calD^2} \Big( \frac{1}{|K^2_0|} \sum_{K^2 \subset K^2_0} |a_{K, (I_i^1)}|^2 \Big)^{1/2}
\le \frac{\prod_{i=1}^{n+1} |I_i^1|^{\frac 12}}{|K^1|^{n}}.
\end{equation}
Of course, $(\pi S)_k$ is defined symmetrically.

\begin{thm}[\cite{LMV:gen}*{Theorem 6.7.}]\label{thm:partials}
Suppose $(S\pi)_k$ is an $n$-linear partial paraproduct, $1 < p_1, \ldots, p_n \le \infty$ and $\frac{1}{p} = \sum_{i=1}^n \frac{1}{p_i}> 0$. Then, for every
$0<\beta \le 1$ we have
$$
\|(S\pi)_k(f_1, \ldots, f_n)w \|_{L^p} \lesssim_\beta 2^{\max_j k_j \beta}\prod_{i=1}^n \|f_i w_i\|_{L^{p_i}}
$$
for all multilinear bi-parameter weights $\vec w \in A_{\vec p}$.
\end{thm}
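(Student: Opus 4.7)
The plan is to reduce the partial paraproduct to a structure where the weighted multilinear square function and maximal function estimates from the preliminaries apply. This requires (i) using the $\BMO$ assumption on the coefficients to convert the parameter-$2$ paraproduct into a square function, and (ii) handling the remaining shift structure in parameter $1$ with only a small loss in the complexity.

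First I would, without loss of generality, fix the cancellative slot in parameter $2$ to be $i_2 = n+1$, so that $u_{n+1, K^2}=h_{K^2}$ and $u_{i,K^2}=1_{K^2}/|K^2|$ for $i\neq n+1$. For $i\ne n+1$ the pairing $\langle f_i, \wt h_{I_i^1}\otimes 1_{K^2}/|K^2|\rangle$ equals $\langle \langle f_i\rangle_{K^2,2}, \wt h_{I_i^1}\rangle_1$, freezing the parameter-$2$ data into averages; on the cancellative slot the pairing becomes $\langle\langle f_{n+1},h_{I_{n+1}^1}\rangle_1,h_{K^2}\rangle_2$, a parameter-$2$ martingale difference of the parameter-$1$ Haar coefficient of $f_{n+1}$. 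The coefficient bound \eqref{eq:PPNorma} then says that, for each tuple $(I^1_i)$ with $(I^1_i)^{(k_i)}=K^1$, the sequence $(a_{K,(I_i^1)})_{K^2}$ has $\BMO$ norm at most the shift normalization $\prod_i|I_i^1|^{1/2}/|K^1|^n$.

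Next I would apply a weighted $H^1$-$\BMO$ estimate in parameter $2$ (the mechanism behind Lemma \ref{lem:MW} and its bi-parameter weighted variant Lemma \ref{lem:MWestimate}) to dominate the sum over $K^2$ by a parameter-$2$ square function of $\langle f_{n+1}, h_{I^1_{n+1}}\rangle_1$, multiplied by the shift-type coefficient on parameter $1$. Substituting back into \eqref{eq:Spi} and summing over the $I_i^1$ with common parent $K^1$, the dual form is of $n$-linear shift type in parameter $1$ tensored with a parameter-$2$ square function on slot $n+1$ and parameter-$2$ averages on the remaining slots. At this stage the multilinear bi-parameter square function bound Theorem \ref{thm:multilinSF}, together with the multilinear maximal estimate Proposition \ref{prop:prop2}, delivers the $L^p$-bound in terms of the correct $A_{\vec p}$ norms, essentially following the proof pattern of Theorem \ref{thm:shift}.

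The main obstacle is obtaining the complexity dependence $2^{\max_j k_j\beta}$ for arbitrarily small $\beta>0$. Unlike pure shifts, partial paraproducts do not admit a bound uniform in $k$, because the $\BMO$ dualization and the rearrangement by parent cube contribute a logarithmic-in-$k$ (and, in the quasi-Banach range, potentially worse) factor that must be absorbed. I would interpolate between a direct size estimate with polynomial $k$-growth and an orthogonality-based $L^2$ bound with better $k$-dependence, and then re-extrapolate through the $A_\infty$ extrapolation Lemma \ref{lem:extInfty} together with the characterization Lemma \ref{lem:lem1}, using the multilinear reverse H\"older inequality Lemma \ref{lem:RH} to stay inside the genuinely multilinear class $A_{\vec p}$ throughout. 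The quasi-Banach regime $p<1$ and the endpoint cases $p_i=\infty$, where several of the standard duality tools fail, are the most delicate aspects of this final extrapolation step and are where I expect the technical work to concentrate.
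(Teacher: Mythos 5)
Your first two steps do track the actual argument: the paper does not prove Theorem \ref{thm:partials} itself (it cites \cite{LMV:gen}*{Theorem 6.7}), but the method is replayed in the proof of Lemma \ref{lem:SFpartial} here, and it is exactly what you describe at the start -- dualize, freeze the second parameter into averages on the non-cancellative slots, use the $\BMO$ bound \eqref{eq:PPNorma} through an $H^1$-$\BMO$ duality in the second parameter to produce a $K^2$-square function, and then treat the remaining parameter-one shift structure with the weighted square-function and maximal-function machinery.

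The genuine gap is in your final paragraph, i.e.\ precisely where the theorem's content lies. First, the factor $2^{\max_j k_j\beta}$ is not obtained by interpolating a polynomially-growing estimate against an ``orthogonality-based $L^2$ bound'' and then re-extrapolating: there is no unweighted or fixed-weight $L^2$ estimate available that you could interpolate with while staying inside the genuinely multilinear class $A_{\vec p}$, and interpolation between weighted estimates with different weight classes is not at your disposal. In the actual proof the loss arises after the $\BMO$ dualization, when one must sum over the $\sim 2^{k d_1}$ cubes $I^1$ with $(I^1)^{(k)}=K^1$ inside the weighted estimate; Cauchy--Schwarz there would cost $2^{kd_1/2}$, and the fix is an $\ell^s$/H\"older trick with $s>1$ chosen so that $d_1/s'\le\beta$, giving $2^{kd_1/s'}$ directly (this is visible in the proof of Lemma \ref{lem:SFpartial}, where $F_{n+1,K^1}$ is bounded by $2^{k_{n+1}d_1/s'}$ times an $\ell^s$ expression, subsequently controlled by weighted maximal functions and a Proposition \ref{prop:5.8}-type estimate). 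Your diagnosis of the loss as ``logarithmic from the $\BMO$ dualization'' is also not where the problem sits. Second, your route to the full range $1/p=\sum_i 1/p_i>0$ (including $p\le 1$ and $p_i=\infty$) through Lemma \ref{lem:extInfty} plus Lemmas \ref{lem:lem1} and \ref{lem:RH} does not work: Lemma \ref{lem:extInfty} is an $A_\infty$ extrapolation for pairs of functions against a single $A_\infty$ weight and cannot generate estimates quantified over all tuples $\vec w\in A_{\vec p}$. What is actually used is the genuinely multilinear extrapolation of \cite{LMO} (to reduce to the tuple $(2,\dots,2)$, where duality and the weighted vector-valued estimates are run) and of \cite{LMMOV} for the quasi-Banach endpoint cases; without an input of that type your duality-based argument does not reach $p\le 1$ or $p_i=\infty$.
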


\subsection{Full paraproducts}
An $n$-linear bi-parameter full paraproduct $\Pi$ takes the form
\begin{equation}\label{eq:pi2bar}
\langle \Pi(f_1, \ldots, f_n) , f_{n+1} \rangle = \sum_{K = K^1 \times K^2} a_{K} \prod_{i=1}^{n+1} \langle f_i, u_{i, K^1} \otimes u_{i, K^2} \rangle,
\end{equation}
where the functions $u_{i, K^1}$ and $u_{i, K^2}$ are like in \eqref{eq:Spi}.
The coefficients are assumed to satisfy
$$
\| (a_{K} ) \|_{\BMO_{\operatorname{prod}}} = \sup_{\Omega} \Big(\frac{1}{|\Omega|} \sum_{K\subset \Omega} |a_{K}|^2 \Big)^{1/2} \le 1,
$$
where the supremum is over open sets $\Omega \subset \R^d = \R^{d_1} \times \R^{d_2}$ with $0 < |\Omega| < \infty$.

\begin{thm}[\cite{LMV:gen}*{Theorem 6.21.}]
Suppose $\Pi$ is an $n$-linear bi-parameter full paraproduct, $1 < p_1, \ldots, p_n \le \infty$ and $1/p = \sum_{i=1}^n 1/p_i> 0$. Then we have
$$
\|\Pi(f_1, \ldots, f_n)w \|_{L^p} \lesssim \prod_{i=1}^n \|f_i w_i\|_{L^{p_i}}
$$
for all multilinear bi-parameter weights $\vec w \in A_{\vec p}$.
\end{thm}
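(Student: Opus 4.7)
The plan is to bound the pairing $\langle \Pi(f_1, \ldots, f_n), f_{n+1} \rangle$ by a combination of the multilinear maximal function and the multilinear square functions $A_{1,k}, A_{2,k}, A_{3,k}$ from Section~\ref{sec:SF}, closing with the weighted estimates of Proposition~\ref{prop:prop2} and Theorem~\ref{thm:multilinSF}. We split by which slots $i_2^1, i_2^2 \in \{1,\ldots,n+1\}$ host the cancellative Haar in each parameter. The quasi-Banach range $p \le 1$ is recovered at the end by multilinear extrapolation from \cites{LMO, LMMOV, Nieraeth}, so it suffices to work at a Banach-range exponent, chosen so that the relevant dual weights are well behaved.

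Start with the symmetric case $i_2^1 = i_2^2 = n+1$, where both cancellations fall on the dual variable, so
$$
\langle \Pi(f_1, \ldots, f_n), f_{n+1} \rangle = \sum_{K \in \calD} a_K \Big(\prod_{i=1}^n \langle f_i \rangle_K\Big) \langle f_{n+1}, h_K\rangle.
$$
Bi-parameter $H^1$--$\BMO$ duality applied to $(a_K)$, whose norm is controlled by the paraproduct normalization built into \eqref{eq:pi2bar}, bounds this by the $L^1$ norm of
$$
\Big(\sum_{K \in \calD} |\Delta_K f_{n+1}|^2 \prod_{i=1}^n \langle |f_i| \rangle_K^2 \Big)^{1/2} \le M_{\calD}(f_1,\ldots,f_n)\cdot S_{\calD} f_{n+1}.
$$
H\"older then splits this into $\|M_{\calD}(f_1,\ldots,f_n) w\|_{L^p}$, controlled by Proposition~\ref{prop:prop2}, times $\|S_{\calD} f_{n+1} w^{-1}\|_{L^{p'}}$, which at our chosen Banach exponent is controlled by $\|f_{n+1} w^{-1}\|_{L^{p'}}$ via Lemma~\ref{lem:lem3}.

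In the remaining cases at least one of $i_2^1, i_2^2$ lies in $\{1, \ldots, n\}$, and the cancellation sits on an input function. We apply $\BMO_{\operatorname{prod}}$ duality on $(a_K)$ as before, but now the resulting $L^1$ square-function object involves $\Delta^m_{K^m} f_{i_2^m}$ in place of $\Delta_K f_{n+1}$; we use Lemma~\ref{lem:MW} in the parameter(s) where $f_{n+1}$ does not carry a Haar, in order to convert paired averages against $f_{n+1}$ into plain averages that are absorbed by $M_{\calD}$. The resulting expression is an instance of one of the multilinear square functions $A_{1,(0,0)}, A_{2,(0,0,0)}$ or $A_{3,(0,0,0,0)}$ from Section~\ref{sec:SF} applied to $(f_1, \ldots, f_n)$, times $\|f_{n+1} w^{-1}\|_{L^{p'}}$; Theorem~\ref{thm:multilinSF} then closes the bound.

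The main obstacle is the genuinely mixed configuration, e.g.\ $i_2^1 \in \{1,\ldots,n\}$ with $i_2^2 = n+1$, where the cancellation is asymmetric between the two parameters. Here one must run $H^1$--$\BMO$ duality only partially in the second parameter (where $f_{n+1}$ carries cancellation) and produce a full square function on the input $f_{i_2^1}$ in the first parameter. Keeping track of which martingale block lands on which input is exactly the combinatorial content of the $A_{2,k}$ family \eqref{eq:eq11}, and this is where the full flexibility of Theorem~\ref{thm:multilinSF} is essential.
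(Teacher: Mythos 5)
Your overall architecture (dualize, use the product BMO coefficients through $H^1$--$\BMO$ duality, land on the multilinear square functions of Section \ref{sec:SF}, and extrapolate to $p\le 1$) is the right family of ideas, but there is a genuine gap at the point where you split off $f_{n+1}$ by H\"older. After the duality step you estimate by $\|M_{\calD}(f_1,\ldots,f_n)w\|_{L^p}\,\|S_{\calD}f_{n+1}\,w^{-1}\|_{L^{p'}}$ and claim the second factor is controlled by $\|f_{n+1}w^{-1}\|_{L^{p'}}$ ``via Lemma \ref{lem:lem3}.'' Lemma \ref{lem:lem3} is the \emph{lower} square function bound $\|f\|_{L^q(v)}\lesssim\|S_{\calD}f\|_{L^q(v)}$ for $v\in A_\infty$; what you need here is the \emph{upper} bound with $v=w^{-p'}$, which requires $w^{-p'}\in A_{p'}$ (bi-parameter), and this is simply not available for genuinely multilinear weights: $\vec w\in A_{\vec p}$ only gives $w^p\in A_{np}$ and $w_i^{-p_i'}\in A_{np_i'}$ (Lemma \ref{lem:lem1}), and $w^{-p'}$ need not even be in $A_\infty$. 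For instance with $n=2$, $p_1=p_2=4$, $w_1=w_2=|x|^{1/2}$ (so $p=2$), one checks $\vec w\in A_{\vec p}$ while $w^{-p'}=|x|^{-2}$ is not locally integrable. For the same reason your proposal to ``choose a Banach-range exponent so that the relevant dual weights are well behaved'' cannot work: the failure of $w^{-p'}\in A_\infty$ persists for every choice of $\vec p$, so extrapolating from a convenient tuple does not remove the obstruction. The same defect reappears in your mixed cases, where you again want to peel off a factor $\|f_{n+1}w^{-1}\|_{L^{p'}}$ after applying a square-function bound to $f_{n+1}$ alone.

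The way the paper (and \cite{LMV:gen}) closes this step -- see the proof of Proposition \ref{prop:bloomPara}, of which the present theorem is the special case $\nu\equiv 1$ -- is to \emph{not} separate $f_{n+1}$: after the $H^1$--$\BMO$ duality one keeps all $n+1$ functions inside an $A_{1,k}$- or $A_{2,k}$-type square function, in which every martingale difference occurs inside an average $\langle|\Delta_K f_i|\rangle_K$, and then applies Theorem \ref{thm:multilinSF} to the $(n+1)$-tuple of functions with the weight tuple $(w_1,\ldots,w_n,w^{-1})\in A_{(p_1,\ldots,p_n,p')}$ supplied by Lemma \ref{lem:lem7} (note $\sum_i 1/p_i+1/p'=1$, so the target space is exactly $L^1$). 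This avoids any single-function weighted square-function upper bound against $w^{-p'}$. If you restructure your case analysis so that the dual function is absorbed into the multilinear square function in this way (your last paragraph is already pointing toward the $A_{2,k}$ family, which is the correct device for the mixed configurations), the argument closes; as written, the H\"older split and the appeal to Lemma \ref{lem:lem3} do not.
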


In fact, the above theorem is a special case of the Bloom type inequality. The following operator and result have obvious extensions in the product BMO setting.
We consider an $n$-linear bi-parameter paraproduct
\begin{equation}
\langle \Pi_{b}(f_1, \ldots, f_n) , f_{n+1} \rangle = \sum_{K = K^1 \times K^2} \langle b, v_{0, K^1} \otimes v_{0, K^2} \rangle \prod_{i=1}^{n+1} \langle f_i, v_{i, K^1} \otimes v_{i, K^2} \rangle.
\end{equation}
Here we assume that for $m \in \{1,2\}$
there exist two indices $i^m_0,i_1^m \in \{0, \ldots, n+1\}$, $i^m_0 \not =i^m_1$, so that $v_{i^m_0, K^m}=h_{K^m}$, $v_{i^m_1, K^m}=h_{K^m}$ and for the remaining indices $i \not \in \{i^m_0, i^m_1\}$ we have $v_{i, K^m}= \frac{1_{K^m}}{|K^m|}$. Moreover, here we will assume that we at least have $0 \in \{i^1_0, i^1_1\}$ or $0 \in \{i^2_0, i^2_1\}$.

Later on, paraproducts will also appear as a result of standard expansions of products
$$
bf = \sum_{I^i \in \calD^i} \ave{b,h_{I^i}}_i\ave{f,h_{I^i}}_i \otimes h_{I^i}h_{I^i} + \sum_{I^i \in \calD^i} \ave{b,h_{I^i}}_i\ave{f}_{I^i,i} \otimes h_{I^i} + \sum_{I^i \in \calD^i} \ave{b}_{{I^i},i}\ave{f,h_{I^i}}_i \otimes h_{I^i}.
$$
In the first term, the worst case is if $h_{I^i}h_{I^i}$ is non-cancellative hence equals to $1_{I^i}/|I^i|.$ Often it is enough to consider the worst-case scenario.

We denote these expansions as $\Pi_{j_1,j_2}(b,f), (j_1,j_2) \in  \{1,2,3\}^2,$ where the indices dictates the from of the paraproduct. More specifically, in the above language of the multilinear paraproduct: if $j_m = 1$ then $i_0^m = 0$ and $i_1^m = 1,$
if $j_m = 2$ then $i_0^m = 0$ and $i_1^m = 2,$ and if $j_m = 3$ then $i_0^m = 1$ and $i_1^m = 2.$ In all of the cases the unmentioned slot do not have the cancellation. Hence, notice that when $j_1 = 3 = j_2$ we have no cancellation for the function $b$ meaning that it is not a paraproduct as such.

\begin{prop}\label{prop:bloomPara}
Let $\Pi_{b}$ be a paraproduct as described above. Fix $\vec p = (p_1, \ldots, p_n)$ so that $1 < p_i \le \infty$, define $\frac{1}{p} = \sum_{i=1}^n \frac{1}{p_i}$ and assume $1 < p < \infty.$
Let $(w_1, \ldots, w_n, \nu)$
% and $(w_1, \ldots, w_i, \ldots, w_n)$ be two 
be a tuple of weights. Assume that
$$
b \in \bmo(\nu) \qquad \textup{and} \qquad (w_1, \ldots, w_n, \nu w^{-1}) \in A^*_{\vec{p}}.
$$
Then we have
\begin{equation}\label{eq:bloomPara}
\|\Pi_{b}(f_1, \ldots, f_n) \nu^{-1}w \|_{L^p} \lesssim \| b\|_{\bmo(\nu)} \prod_{i=1}^n \|f_i w_i\|_{L^{p_i}}.
\end{equation}

Moreover, if $\lambda_j = w_j \nu^{-1}$ for some $j = 1,2,\ldots n$ such that 
%$\lambda_j^{-p_j'} \in A_{np_j'},$  $\lambda_j^p \prod_{\substack{i= 1\\ i\neq j}}^{n} w_i^p \in A_{np},$ and $\prod_{i=1}^n w_i^p \in A_{np},$ 
$$(w_1,\dots,w_{j-1}, \lambda_j,w_{j+1}, \ldots, w_n),(w_1,\ldots,w_n) \in A_{\vec p},$$
then
\eqref{eq:bloomPara} holds for all $1<p_i\le \infty$ such that $p \in (n^{-1},\infty).$
\end{prop}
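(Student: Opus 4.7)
The strategy is to first establish the Banach-range estimate $1<p<\infty$ under the joint $A_{\vec p}^*$ hypothesis, and then obtain the quasi-Banach extension in the ``moreover'' part via the multilinear two-weight extrapolation Theorem \ref{thm:extrapo}. For the Banach range, duality reduces the claim to
$$|\langle \Pi_b(f_1, \ldots, f_n), f_{n+1}\rangle| \lesssim \|b\|_{\bmo(\nu)} \prod_{i=1}^n \|f_iw_i\|_{L^{p_i}} \cdot \|f_{n+1}(\nu w^{-1})\|_{L^{p'}}.$$
The structural constraint on the paraproduct forces the $b$-slot to carry Haar cancellation in at least one parameter, so after expansion the pairing takes the form $\sum_R \ave{b, H_R}\alpha_R$ where $H_R$ is either $h_{R^1}\otimes h_{R^2}$ (both parameters cancellative) or $h_{R^1}\otimes 1_{R^2}/|R^2|$ (or the symmetric variant), and $\alpha_R$ is a product of pairings of the $f_i$'s against appropriate Haar/average tensors.

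The core step is to apply the Muckenhoupt--Wheeden-type Lemma \ref{lem:MWestimate} (the full bi-parameter form when $H_R$ is doubly cancellative, the partial form otherwise) with an auxiliary weight $\sigma$ chosen as a suitable product of $\sigma_i^{1/p_i'}$ ($i\le n$), $\sigma_{n+1}^{1/p}$ and a corrective power of $\nu$. The joint condition $\vec w \in A_{\vec p}^*$ together with Lemma \ref{lem:lem1} forces each of $\sigma_i$, $\sigma_{n+1}$, and $\nu$ into $A_\infty$, so the multilinear reverse H\"older Lemma \ref{lem:RH} yields the factorization $\ave{\sigma}_R \sim \prod_{i=1}^n \ave{\sigma_i}_R^{1/p_i'}\ave{\sigma_{n+1}}_R^{1/p}$. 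The MW estimate converts the pairing into $\|b\|_{\bmo(\nu)}\|\mathcal{S}\|_{L^1(\sigma\nu)}$, where $\mathcal{S}$ is a multilinear square function of exactly the type controlled by the bi-parameter multilinear square-function bound Theorem \ref{thm:multilinSF} (for the average-type slots) combined with Proposition \ref{prop:5.8} (for the Haar-coefficient slots, which absorbs the $\ave{\sigma_i}_R^{-1/p_i'}$ denominators, using crucially that each $\sigma_i$ is $A_\infty$). Reassembly then produces the dual bound, closing the Banach range.

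For the quasi-Banach extension, recall from the excerpt that the two separate $A_{\vec p}$ hypotheses together with $\nu\in A_\infty$ imply the joint $A_{\vec p}^*$ hypothesis, so the Banach-range estimate just proved is available across the full hypothesis of the extrapolation Theorem \ref{thm:extrapo}. Applying that theorem, iteratively in each index $i\ne j$ if necessary, to the pair $(\Pi_b(f_1,\ldots,f_n),f_1,\ldots,f_n)$ promotes \eqref{eq:bloomPara} to the entire range $1<p_i\le\infty$, $p\in(n^{-1},\infty)$.

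\textbf{Main obstacle.} The principal technical difficulty is the coordinated choice of $\sigma$: it must simultaneously satisfy (i) $\sigma\nu \in A_\infty$ so Lemma \ref{lem:MWestimate} applies, (ii) the reverse H\"older splitting of $\ave{\sigma}_R$ cleanly exposes each of the factors $\ave{\sigma_i}_R^{1/p_i'}$ and $\ave{\sigma_{n+1}}_R^{1/p}$, and (iii) the residual $\ave{\sigma_i}_R^{-1/p_i'}$ denominators match the hypotheses of Proposition \ref{prop:5.8}. It is precisely the joint $A_{\vec p}^*$ condition, via Lemma \ref{lem:lem1}, that makes all three requirements simultaneously attainable by driving every relevant $\sigma_i$, $\sigma_{n+1}$ and $\nu$ into $A_\infty$. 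A secondary bookkeeping point is the partially cancellative case, where one has to invoke the one-parameter-uniform version Lemma \ref{lem:MWuniform} of the $\bmo(\nu)$ bound through the characterization \eqref{eq:esssupBMO} in order to reproduce the fully cancellative argument.
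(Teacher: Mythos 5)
Your overall architecture -- dualize, use the $\bmo(\nu)$--$H^1$ inequality, bound the resulting multilinear square function by the weighted estimates, and get the quasi-Banach range from Theorem \ref{thm:extrapo} -- matches the paper, and your treatment of the ``moreover'' part via extrapolation is exactly the paper's. The gap is in the core Banach-range step, which you route through Lemma \ref{lem:MWestimate} with an auxiliary weight $\sigma$ built from the $\sigma_i^{1/p_i'}$, $\sigma_{n+1}^{1/p}$ and a ``corrective power of $\nu$''. As described this does not go through: (i) the product $\prod_{i=1}^n\sigma_i^{1/p_i'}\,\sigma_{n+1}^{1/p}$ equals $\nu^{-1}$, and neither $\nu^{-1}$ nor any evident correction of it is known to lie in $A_\infty$ under the hypotheses, whereas Lemma \ref{lem:MWestimate} requires $\sigma\in A_\infty$; (ii) Lemma \ref{lem:RH} only gives the one-sided bound $\prod_i\ave{\sigma_i}_R^{1/p_i'}\ave{\sigma_{n+1}}_R^{1/p}\lesssim\ave{\prod_i\sigma_i^{1/p_i'}\sigma_{n+1}^{1/p}}_R$, not the two-sided ``factorization'' $\ave{\sigma}_R\sim\prod_i\ave{\sigma_i}_R^{1/p_i'}\ave{\sigma_{n+1}}_R^{1/p}$ you invoke (the exponents sum to $n>1$, so H\"older does not supply the reverse inequality either); (iii) the ``reassembly'' of the square function carrying the inserted $\ave{\sigma_i}_R^{-1/p_i'}$ denominators via Proposition \ref{prop:5.8} is precisely what would have to be proved, and it is left unargued -- you flag the coordinated choice of $\sigma$ as the main obstacle and then only assert it is attainable.

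What you miss is that no auxiliary weight is needed at all here. Taking $\sigma\equiv 1$, i.e.\ using directly $|\langle b,g\rangle|\lesssim\|b\|_{\bmo(\nu)}\|S_{\calD}g\|_{L^1(\nu)}$ (resp.\ $\|S_{\calD^2}^2 g\|_{L^1(\nu)}$ when $b$ carries cancellation in only one parameter), the dual pairing is bounded by $\|b\|_{\bmo(\nu)}$ times the $L^1(\nu)$ norm of an $A_{1,k}$- or $A_{2,k}$-type multilinear square function of the $n+1$ functions $f_1,\ldots,f_{n+1}$. Since $\prod_{i=1}^n w_i\cdot\nu w^{-1}=\nu$ and the hypothesis $(w_1,\ldots,w_n,\nu w^{-1})\in A^*_{\vec p}$ is, for $p>1$, exactly the $(n+1)$-linear condition $A_{(p_1,\ldots,p_n,p')}$, Theorem \ref{thm:multilinSF} applied with exponents $(p_1,\ldots,p_n,p')$ (whose reciprocals sum to $1$) yields precisely $\prod_{i=1}^n\|f_iw_i\|_{L^{p_i}}\|f_{n+1}\nu w^{-1}\|_{L^{p'}}$, while $\nu\in A_\infty$, needed for the duality inequality, comes from Lemma \ref{lem:lem1}. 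This is the paper's proof; Lemma \ref{lem:MWestimate}, Lemma \ref{lem:RH} and Proposition \ref{prop:5.8} are reserved for the genuinely harder commutator terms later, where the $(n+1)$-th slot cannot be fed directly into Theorem \ref{thm:multilinSF}. If you repair your argument by making the trivial choice of $\sigma$, it collapses to that proof; with a nontrivial $\sigma$ as proposed, the argument as written is not justified.
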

\begin{proof}
It suffices to show that
$$
|\langle \Pi_{b}(f_1, \ldots, f_n), f_{n+1} \rangle |\lesssim \| b\|_{\bmo(\nu)} \prod_{i=1}^n \|f_i w_i\|_{L^{p_i}} \cdot \|f_{n+1} \nu w^{-1}\|_{L^{p'}}.
$$
\textbf{Case I.} We have $0 \in \{i^1_0, i^1_1\}$ and $0 \in \{i^2_0, i^2_1\}$. We consider the concrete case
\begin{align*}
&\langle \Pi_{b}(f_1, \ldots, f_n) , f_{n+1} \rangle\\
 &= \sum_{K = K^1 \times K^2} \langle b, h_{K^1} \otimes h_{K^2} \rangle
\Big \langle f_1, h_{K^1} \otimes \frac{1_{K^2}}{|K^2|} \Big \rangle
\Big \langle f_2, \frac{1_{K^1}}{|K^1|} \otimes h_{K^2} \Big \rangle
\prod_{i=3}^{n+1} \langle f_i \rangle_{K}.
\end{align*}
We have
\begin{align*}
\langle \Pi_{b}(f_1, \ldots, f_n) , f_{n+1} \rangle &\lesssim \| b\|_{\bmo(\nu)} \Big\| \Big( \sum_{K \in \calD} \langle | \Delta^1_{K^1} f_1 | \rangle_K^2
\langle | \Delta^2_{K^2} f_2| \rangle_K^2 \prod_{i=3}^{n+1} \langle |f_i| \rangle_K^2 1_K \Big)^{\frac 12} \Big\|_{L^1(\nu)} \\
&\lesssim \| b\|_{\bmo(\nu)} \prod_{i=1}^n \|f_i w_i\|_{L^{p_i}} \cdot \|f_{n+1} \nu w^{-1}\|_{L^{p'}}.
\end{align*}
Here the first step used that $\nu \in A_{\infty}$ -- which follows as $(w_1, \ldots, w_n, \nu w^{-1}) \in A^*_{\vec p}$ --
and the estimate
$$
|\langle b, f\rangle| \lesssim \| b\|_{\bmo(\nu)} \| S_{\calD} f \|_{L^1(\nu)}.
$$
The second step used Theorem \ref{thm:multilinSF} together with the assumption $(w_1, \ldots, w_n, \nu w^{-1}) \in A^*_{\vec p}.$ 
 
\textbf{Case 2.} We have $0 \not \in \{i^1_0, i^1_1\}$ but $0 \in \{i^2_0, i^2_1\}$ (or the other way around).
We consider the concrete case
\begin{align*}
&\langle \Pi_{b}(f_1, \ldots, f_n) , f_{n+1} \rangle \\
& = \sum_{K} \Big \langle b, \frac{1_{K^1}}{|K^1|} \otimes h_{K^2} \Big \rangle
\Big \langle f_1, \frac{1_{K^1}}{|K^1|} \otimes h_{K^2} \Big \rangle
\Big \langle f_2, h_{K^1} \otimes \frac{1_{K^2}}{|K^2|} \Big \rangle
\Big \langle f_3, h_{K^1} \otimes \frac{1_{K^2}}{|K^2|} \Big \rangle
\prod_{i=4}^{n+1} \langle f_i \rangle_{K}.
\end{align*}
We have
\begin{align*}
&\langle A(f_1, \ldots, f_n) , f_{n+1} \rangle \\
&\lesssim \| b\|_{\bmo(\nu)} \Big\| \Big( \sum_{K^2} \Big( \sum_{K^1}
\langle|\Delta^2_{K^2}f_1|\rangle_{K}\langle|\Delta^1_{K^1}f_2|\rangle_{K}
\langle|\Delta^1_{K^1}f_3|\rangle_{K} \prod_{i=4}^{n} \langle |f_i|\rangle_K1_{K}
\Big)^2\Big)^{ \frac 12} \Big\|_{L^1(\nu)} \\
&\lesssim \| b\|_{\bmo(\nu)} \prod_{i=1}^n \|f_i w_i\|_{L^{p_i}} \cdot \|f_{n+1} \nu w^{-1}\|_{L^{p'}},
\end{align*}
where we used the estimate
$$
|\langle b, f\rangle| \lesssim \| b\|_{\bmo(\nu)} \| S_{\calD^2}^2 f \|_{L^1(\nu)}
$$
and Theorem \ref{thm:multilinSF}.

The second claim is obtained by using extrapolation, Theorem \ref{thm:extrapo}.
\end{proof}

Let $\lambda$ and $w$ be bi-parameter weights such that for some $1<p <\infty$ we have $\lambda^{-p'}, \\w^{-p'} \in A_\infty.$ Assume also that $\nu := \lambda^{-1}w \in A_\infty$ and $b \in \bmo(\nu).$
Then we have a weighted variant of the paraproduct operator
\begin{equation}\label{eq:pi2barweighted}
\langle \Pi_{b,\eta}f_1, f_{2} \rangle = \sum_{K = K^1 \times K^2} \langle b, h_{K} \rangle \langle f_1, h_{ K} \rangle \ave{f_2}_{K}^\eta,
\end{equation}
where $\eta = \lambda^{-p'}.$ 
\begin{prop}\label{prop:bloomParaEta1}
let $p \in (1,\infty).$ Let $\lambda$ and $w$ be bi-parameter weights such that $\lambda^{-p'}, \\w^{-p'} \in A_\infty.$ 
Let $\Pi_{b,\eta}$ be a  weighted paraproduct operator defined via \eqref{eq:pi2barweighted}, we have
$$
\|\Pi_{b,\eta}(f) \lambda \|_{L^p} \lesssim \| b\|_{\bmo(\nu)}\|f w\|_{L^{p}}.
$$
\end{prop}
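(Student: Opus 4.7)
The plan is to dualize and apply the Muckenhoupt--Wheeden estimate of Lemma \ref{lem:MWestimate} with a carefully chosen auxiliary weight, then split the result via H\"older into a maximal function piece controlled by Proposition \ref{prop:prop1} and a square function piece controlled by Proposition \ref{prop:5.8}. The key choice is the Muckenhoupt--Wheeden weight $\sigma := w^{-p'}$, which is in $A_\infty$ by hypothesis and is exactly the weight that lets the final application of Proposition \ref{prop:5.8} recover the target norm $\|fw\|_{L^p}$.

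By $L^p$--$L^{p'}$ duality $\|\Pi_{b,\eta}(f)\lambda\|_{L^p} = \sup |\langle \Pi_{b,\eta}(f),\lambda g\rangle|$, the supremum being over non-negative $g$ with $\|g\|_{L^{p'}}=1$, and by \eqref{eq:pi2barweighted} this pairing equals $\sum_K \langle b,h_K\rangle \langle f,h_K\rangle \ave{\lambda g}_K^\eta$. I would apply Lemma \ref{lem:MWestimate} with $\sigma := w^{-p'}$ and $\varphi_K := \langle f,h_K\rangle \ave{\lambda g}_K^\eta /\ave{\sigma}_K$ to obtain
\[
\Bigl|\sum_K \langle b,h_K\rangle\langle f,h_K\rangle\ave{\lambda g}_K^\eta\Bigr| \lesssim \|b\|_{\bmo(\nu)}\, \Bigl\|\Bigl(\sum_K \varphi_K^2 \frac{1_K}{|K|}\Bigr)^{1/2}\Bigr\|_{L^1(\sigma\nu)}.
\]

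Next, the pointwise estimate $|\ave{\lambda g}_K^\eta|\le M^\eta_\calD(\lambda g)$ on $K$ separates the inner square function into $M^\eta_\calD(\lambda g)\cdot B$, where
\[
B := \Bigl(\sum_K \frac{|\langle f,h_K\rangle|^2}{|K|\,\ave{\sigma}_K^2}\,1_K\Bigr)^{1/2}.
\]
Using $\nu = \lambda^{-1}w$ and the identity $1-p' = -p'/p$ gives $\sigma\nu = w^{-p'/p}\lambda^{-1}$, so H\"older's inequality with exponents $p',p$ yields
\[
\int M^\eta_\calD(\lambda g)\cdot B\cdot \sigma\nu \le \bigl\|M^\eta_\calD(\lambda g)\,\lambda^{-1}\bigr\|_{L^{p'}}\cdot \bigl\|B\, w^{-p'/p}\bigr\|_{L^p}.
\]

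To finish, each factor is handled by an established tool. Since $\eta = \lambda^{-p'}\in A_\infty$, Proposition \ref{prop:prop1} gives $\|M^\eta_\calD(\lambda g)\|_{L^{p'}(\eta)}\lesssim \|\lambda g\|_{L^{p'}(\eta)} = \|g\|_{L^{p'}} = 1$. For $B$, the identity $\langle|\Delta_K f|\rangle_K^2 = |\langle f,h_K\rangle|^2/|K|$ for cancellative Haar functions lets me cast $B$ into the form of Proposition \ref{prop:5.8} with $k=0$, one function, and $u := \sigma = w^{-p'}\in A_\infty$; since $u^{1/p} = w^{-p'/p}$ and $u^{-1/p'} = w$, that proposition gives $\|B\, w^{-p'/p}\|_{L^p}\lesssim \|fw\|_{L^p}$. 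Taking the supremum in $g$ concludes the proof. The main delicate point is aligning the $L^1(\sigma\nu)$ norm produced by Muckenhoupt--Wheeden with the tools available for $M^\eta_\calD$ and $B$; the choice $\sigma = w^{-p'}$ is what makes all the weights cooperate across the H\"older split, which is possible precisely because of the hypothesis $w^{-p'}\in A_\infty$.
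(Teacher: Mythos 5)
Your proof is correct and follows essentially the same route as the paper's: dualize, apply Lemma \ref{lem:MWestimate} with the auxiliary weight $\sigma=w^{-p'}$, extract the maximal function $M_{\calD}^{\eta}$, split by H\"older using $\sigma\nu=\sigma^{1/p}\eta^{1/p'}$, and conclude with Fefferman's bound (Proposition \ref{prop:prop1}, justified since $\eta\in A_\infty$) and Proposition \ref{prop:5.8} with $u=\sigma$. The only cosmetic difference is that you dualize directly against $\lambda g$ rather than a general $f_2$ with $\|f_2\lambda^{-1}\|_{L^{p'}}$ on the right, which is equivalent.
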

\begin{proof}
The result follows from a variant of techniques seen in the proof of Proposition \ref{prop:bloomPara}. For example, by duality we have terms like \eqref{eq:pi2barweighted}. Introducing a weight averages $\ave{\sigma}_K \ave{\sigma}_K^{-1} =1,$ where $\sigma = w^{-p'},$ we can apply Lemma \ref{lem:MWestimate}. 
Hence, we get \begin{align*}
|\langle\Pi_{b,\eta}f_1, f_{2} \rangle| &\lesssim \|b\|_{\bmo(\nu)} \int \Big(\sum_{K} \frac{\langle f_1, h_{ K} \rangle^2}{\ave{\sigma}_K^2} (\ave{f_2}_{K}^\eta)^2 \frac{1_K}{|K|}\Big)^\frac12 \sigma \nu \\
&\leq \|b\|_{\bmo(\nu)}\int M_\calD^{\eta}f_2 \Big(\sum_{K} \frac{\langle f_1, h_{ K} \rangle^2}{\ave{\sigma}_K^2}\frac{1_K}{|K|} \Big)^2 \sigma \nu\\
&\leq \|b\|_{\bmo(\nu)}\| M_\calD^{\eta}f_2\|_{L^{p'}(\eta)} \Big\|\Big(\sum_{K} \frac{\langle f_1, h_{ K} \rangle^2}{\ave{\sigma}_K^2} \frac{1_K}{|K|}\Big)^2 \sigma^{\frac{1}{p}}\Big\|_{L^p} \\
&\lesssim \|b\|_{\bmo(\nu)}\|f_2 \lambda^{-1}\|_{L^{p'}}  \|f_1 w\|_{L^p}.
\end{align*}
\end{proof}

In the same setting as above we can have, for example, the following mixed type weighted paraproduct
\begin{equation*}
\langle \Pi_{b,\eta}f_1, f_{2} \rangle = \sum_{K = K^1 \times K^2} \Big\langle b, h_{K^1} \otimes \frac{1_{K^2}}{|K^2|} \Big\rangle \langle f_1, h_{ K} \rangle \ave{\ave{f_2,h_{K^2}}_2}_{K^1}^{\ave{\eta}_{K^2,2}}.
\end{equation*}
 Symmetrical definition when we have $\Ave{b, \frac{1_{K^1}}{|K^1|} \otimes h_{K^2}}$. We also consider the case  
 \[
 \langle \Pi_{b,\eta}f_1, f_{2} \rangle = \sum_{K = K^1 \times K^2} \Big\langle b, h_{K^1} \otimes h_{K^2}\Big\rangle \Ave{ f_1, h_{ K^1}\otimes \frac{1_{K^2}}{|K^2|}  } \ave{\ave{f_2,h_{K^2}}_2}_{K^1}^{\ave{\eta}_{K^2,2}}.
 \]

\begin{prop}\label{prop:bloomParaMixedEta}
For a weighted paraproduct operator $\Pi_{b,\eta}$ as described above, we have
$$
|\langle \Pi_{b,\eta}f_1, f_{2} \rangle| \lesssim \| b\|_{\bmo(\nu)}\|f w\|_{L^{p}} \|S_\calD^{i} f_2 \lambda^{-1}\|_{L^{p'}},
$$
where $i$ is either 1 or 2 depending on which parameter the cancellation is.
\end{prop}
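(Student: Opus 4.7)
The plan is to extend the duality argument from the proof of Proposition~\ref{prop:bloomParaEta1} to this mixed setting. I will carry it out in detail for the variant with partial cancellation on $b$ in the first parameter,
$$
\langle \Pi_{b,\eta}f_1, f_2\rangle = \sum_{K} \Big\langle b, h_{K^1}\otimes \tfrac{1_{K^2}}{|K^2|}\Big\rangle \langle f_1, h_K\rangle \ave{\ave{f_2, h_{K^2}}_2}_{K^1}^{\ave{\eta}_{K^2,2}};
$$
the remaining variants will require only cosmetic modifications. Setting $\sigma = w^{-p'}\in A_\infty$ and inserting the identity $\ave{\sigma}_K\ave{\sigma}_K^{-1}=1$, I apply the second form of Lemma~\ref{lem:MWestimate}, which is tailored to the pairing $\langle b, h_{K^1}\otimes \tfrac{1_{K^2}}{|K^2|}\rangle$, to obtain
$$
|\langle \Pi_{b,\eta}f_1, f_2\rangle|\lesssim \|b\|_{\bmo(\nu)}\Big\|\sum_{K^2}\Big(\sum_{K^1}\varphi_K^2\tfrac{1_{K^1}}{|K^1|}\Big)^{1/2}\otimes \tfrac{1_{K^2}}{|K^2|}\Big\|_{L^1(\sigma\nu)},
$$
with $\varphi_K := \ave{\sigma}_K^{-1}\langle f_1, h_K\rangle \ave{\ave{f_2, h_{K^2}}_2}_{K^1}^{\ave{\eta}_{K^2,2}}$.

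Next, I will majorize the weighted average in $f_2$ by a one-parameter weighted maximal function,
$\ave{\ave{f_2, h_{K^2}}_2}_{K^1}^{\ave{\eta}_{K^2,2}}\cdot 1_{K^1}(x_1) \le M^{\ave{\eta}_{K^2,2}}_{\calD^1}\ave{f_2, h_{K^2}}_2(x_1)$, and pull this factor out of the $K^1$-sum. The Cauchy--Schwarz inequality applied to the resulting $K^2$-sum separates the $f_1$ and $f_2$ contributions, and H\"older's inequality at exponents $p', p$ with weights $\lambda^{-1}$ and $\sigma^{1/p}$ (using the factorization $\sigma\nu = \lambda^{-1}\sigma^{1/p}$ verified precisely as in the proof of Proposition~\ref{prop:bloomParaEta1}) completes the splitting. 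The $f_1$ piece reduces to $\Big\|\Big(\sum_K \ave{\sigma}_K^{-2}\langle f_1, h_K\rangle^2 \tfrac{1_K}{|K|}\Big)^{1/2}\sigma^{1/p}\Big\|_{L^p}\lesssim \|f_1 w\|_{L^p}$, which is exactly Proposition~\ref{prop:5.8} with $u=\sigma$, $s=2$ and a single index.

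The main obstacle is controlling the $f_2$ piece
$$
I_2 := \Big\|\Big(\sum_{K^2}\bigl(M^{\ave{\eta}_{K^2,2}}_{\calD^1}\langle f_2, h_{K^2}\rangle_2\bigr)^2\tfrac{1_{K^2}}{|K^2|}\Big)^{1/2}\lambda^{-1}\Big\|_{L^{p'}}
$$
by $\|S^2_\calD f_2\,\lambda^{-1}\|_{L^{p'}}$, which has no counterpart in the proof of Proposition~\ref{prop:bloomParaEta1}. The crucial observation is a pointwise comparison to the bi-parameter weighted maximal function: testing $M^\eta_\calD(\Delta^2_{K^2}f_2)$ against rectangles $I^1\times K^2$ and using $\eta(I^1\times K^2) = |K^2|\,\ave{\eta}_{K^2,2}(I^1)$, a short computation yields
$$
M^{\ave{\eta}_{K^2,2}}_{\calD^1}\langle f_2, h_{K^2}\rangle_2(x_1)\,\tfrac{1_{K^2}(x_2)}{|K^2|^{1/2}}\le M^\eta_\calD(\Delta^2_{K^2}f_2)(x_1, x_2).
$$
Since $\eta=\lambda^{-p'}\in A_\infty$, hence $\eta\in A_{q_0}$ for some $q_0$, Proposition~\ref{prop:prop1} gives scalar $L^s(\eta)$-boundedness of $M^\eta_\calD$ for every $s>1$; a standard Rubio de Francia-type enhancement yields the vector-valued inequality $\|(\sum_i |M^\eta_\calD g_i|^2)^{1/2}\|_{L^{p'}(\eta)}\lesssim \|(\sum_i |g_i|^2)^{1/2}\|_{L^{p'}(\eta)}$, and the identification $\sum_{K^2}|\Delta^2_{K^2}f_2|^2 = (S^2_\calD f_2)^2$ concludes. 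The symmetric variant and the operator with fully cancellative $b$ are treated by the same template, substituting the symmetric version or the first form of Lemma~\ref{lem:MWestimate} as appropriate, with $S^2_\calD$ replaced by $S^1_\calD$ in the symmetric case.
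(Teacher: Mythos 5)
Up to the point where you isolate $I_2$, your argument coincides with the paper's own proof: the paper also applies the second claim of Lemma \ref{lem:MWestimate}, splits off the $f_1$-part (which is Proposition \ref{prop:5.8} with $u=\sigma$), and is left with precisely the quantity $\big\|\big(\sum_{K^2}\big[M^{\ave{\eta}_{K^2,2}}_{\calD^1}(\ave{f_2,h_{K^2}}_2)\big]^2\otimes\tfrac{1_{K^2}}{|K^2|}\big)^{1/2}\big\|_{L^{p'}(\eta)}$. Your pointwise domination of this object by $M^\eta_\calD(\Delta^2_{K^2}f_2)$ is correct (the identity $\eta(I^1\times K^2)=|K^2|\int_{I^1}\ave{\eta}_{K^2,2}$ does give the claimed inequality). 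Where you diverge is in closing the estimate: the paper uses the uniform slice-wise bound $\int_{\R^{d_1}}[M^{\ave{\eta}_{K^2,2}}_{\calD^1}(\ave{f_2,h_{K^2}}_2)]^2\ave{\eta}_{K^2,2}\lesssim\int_{\R^{d_1}}\ave{f_2,h_{K^2}}_2^2\ave{\eta}_{K^2,2}$ together with the extrapolation trick of \cite{LMV:gen}*{Lemma 9.2}, which yields the bound by $\|S_{\calD}^2 f_2\,\lambda^{-1}\|_{L^{p'}}$ for every $p'\in(1,\infty)$; you instead invoke an $\ell^2$-valued Fefferman--Stein inequality for the bi-parameter weighted maximal function $M^\eta_\calD$ on $L^{p'}(\eta)$.

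That last step is a genuine gap. Proposition \ref{prop:prop1} is a scalar bound, and the assertion that a ``standard Rubio de Francia-type enhancement'' upgrades it to $\big\|(\sum_i(M^\eta_\calD g_i)^2)^{1/2}\big\|_{L^{p'}(\eta)}\lesssim\big\|(\sum_i g_i^2)^{1/2}\big\|_{L^{p'}(\eta)}$ is not justified in the range you need. For $p'>2$ a Rubio de Francia argument can be completed, but it already needs unstated ingredients: having built a majorant $H$ with $M^\eta_\calD H\le C_H H$, one must check that $H\eta$ is again a bi-parameter $A_{p_0}$ weight with controlled constant and that $M^\eta_\calD\le C_H M^{H\eta}_\calD$ pointwise, so that R.~Fefferman's theorem applies a second time. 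For $p'<2$ (which you must cover, since $p$ ranges over all of $(1,\infty)$) the duality-based Rubio de Francia scheme is unavailable, and the usual fallback for Fefferman--Stein when the Lebesgue exponent lies below the $\ell^q$ exponent -- a vector-valued weak $(1,1)$ bound -- fails for strong (bi-parameter) maximal operators; in the unweighted case one instead iterates one-parameter estimates, and since $\eta$ is not a product measure such an iteration requires a genuine argument (essentially a vector-valued redo of R.~Fefferman's proof, or an $A_r(\eta)$-weighted theory for $M^\eta_\calD$ plus extrapolation), none of which is available in the paper as a citable fact. Either supply that argument, or close the estimate as the paper does: observe that for each fixed $K^2$ the pair $\big(M^{\ave{\eta}_{K^2,2}}_{\calD^1}(\ave{f_2,h_{K^2}}_2),\ave{f_2,h_{K^2}}_2\big)$ satisfies a uniform $L^2(\ave{\eta}_{K^2,2})$ inequality, and apply the extrapolation trick of \cite{LMV:gen}*{Lemma 9.2} to obtain the $L^{p'}(\eta)$ square-function bound for all $p'\in(1,\infty)$.
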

\begin{proof}
Let us, for example, consider the paraproduct written above, where we have $\Ave{b,h_{K^1} \otimes \frac{1_{K^2}}{|K^2|}}.$ Similar to the previous proof, we use Lemma \ref{lem:MWestimate} but this time the second claim. Then the main difference to the previous proof is that we face e.g. 
$$
\Big\| \Big(\sum_{K^2} M^{\ave{\eta}_{K^2,2}}_{\calD^1} (\ave{f,h_{K^2}}_2)^2 \otimes \frac{1_{K^2}}{|K^2|} \Big)^{\frac 12}\Big\|_{L^{p'}(\eta)}.
$$
Nevertheless, the claim follows quite easily via an extrapolation trick (see \cite{LMV:gen}*{Lemma 9.2}), since for fixed $p'=2$ we have 
$$
\int_{\R^{d_1}} \Big[M^{\ave{\eta}_{K^2,2}}_{\calD^1} (\ave{f,h_{K^2}}_2)\Big]^2 \ave{\eta}_{K^2,2} \lesssim [\eta]_{A_\infty} \int_{\R^{d_1}} \ave{f,h_{K^2}}_2^2 \ave{\eta}_{K^2,2}.
$$
\end{proof}

For the references below, we state a lemma regarding the square functions of partial paraproducts. For the lemma, it is relevant in which slots the cancellation appears. The square function can be taken corresponding to the cancellation on the $(n+1)$-th slot. For example, if $(S\pi)_k$ is a form of partial paraproduct such that there is a cancellation on the $(n+1)$-th slot on the second parameter, then we have the  boundedness of the second parameter square function of this operator, namely $S_{\calD^2}(S\pi)_k.$  Similarly, $S_{\calD^1}(S\pi)_k$ and $S_{\calD}(S\pi)_k$ must have the corresponding cancellation to be bounded.
\begin{lem}\label{lem:SFpartial}
Let $U$ be a square function of partial paraproduct stated in above. Let $1< p_i \le \infty$ and $\frac 1 p = \sum_{i=1}^n \frac{1}{p_i} > 0.$ It holds
$$
\Big\|U(f_1,\ldots,f_n) w \Big\|_{L^{p}} \lesssim_{\beta} 2^{\max_j k_j \beta} \prod_{i=1}^n \|f_i w_i\|_{L^{p_i}},
$$
where $w = \prod_{i=1}^n w_i, (w_1,\ldots,w_n) \in A_{(p_1,\ldots,p_n)}.$
\end{lem}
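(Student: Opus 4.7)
The plan is to deduce each square function estimate from the $L^p$ boundedness of partial paraproducts (Theorem \ref{thm:partials}) via a Khintchine randomization argument. I focus on the representative case $U = S_{\calD^2}^2 (S\pi)_k$ with the cancellative Haar $u_{n+1,K^2} = h_{K^2}$; the cases $S_{\calD^1}^1 (S\pi)_k$ and $S_{\calD} (S\pi)_k$ are entirely analogous under the corresponding cancellation assumption in the $(n+1)$-th slot.

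First I would apply Kahane--Khintchine pointwise in $(x_1, x_2)$: for each $p \in (0,\infty)$, with constants independent of $x$,
\[
\Big(\sum_{J^2} |\Delta_{J^2}^2 (S\pi)_k(\vec f)(x)|^2\Big)^{p/2} \sim_p \E_\epsilon \Big|\sum_{J^2} \epsilon_{J^2} \Delta_{J^2}^2 (S\pi)_k(\vec f)(x)\Big|^p,
\]
where $(\epsilon_{J^2})$ are independent Rademachers. Integrating against $w^p$ and using Fubini,
\[
\|S_{\calD^2}^2 (S\pi)_k(\vec f)\, w\|_{L^p}^p \sim_p \E_\epsilon \|U_\epsilon(\vec f)\, w\|_{L^p}^p, \qquad U_\epsilon(\vec f) := \sum_{J^2} \epsilon_{J^2} \Delta_{J^2}^2 (S\pi)_k(\vec f).
\]

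The key observation is that since the Haar function $h_{K^2}$ sits in the $(n+1)$-th slot, $\Delta_{J^2}^2 (S\pi)_k$ simply extracts the $K^2 = J^2$ summand. Hence $U_\epsilon$ is itself an $n$-linear partial paraproduct of exactly the form \eqref{eq:Spi}, with coefficients $a^\epsilon_{K,(I_i^1)} := \epsilon_{K^2} a_{K,(I_i^1)}$ in place of $a_{K,(I_i^1)}$. Since $|\epsilon_{K^2}| = 1$, the coefficient BMO bound is preserved:
\[
\|(a^\epsilon_{K,(I_i^1)})_{K^2}\|_{\BMO} = \sup_{K^2_0} \Big(\frac{1}{|K^2_0|} \sum_{K^2 \subset K^2_0} |a_{K,(I_i^1)}|^2\Big)^{1/2} \le \frac{\prod_{i=1}^{n+1} |I_i^1|^{1/2}}{|K^1|^n}.
\]
Therefore Theorem \ref{thm:partials} applies to $U_\epsilon$ uniformly in $\epsilon$, giving
\[
\|U_\epsilon(\vec f)\, w\|_{L^p} \lesssim_\beta 2^{\max_j k_j \beta} \prod_{i=1}^n \|f_i w_i\|_{L^{p_i}},
\]
and averaging over the signs yields the claim. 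For $S_{\calD^1}^1 (S\pi)_k$ the same argument uses $\wt h_{I_{n+1}^1} = h_{I_{n+1}^1}$ (forced by the assumption that the cancellation lies in the $(n+1)$-th slot on parameter one) and randomizes in $J^1$; for $S_\calD (S\pi)_k$ one randomizes in both parameters simultaneously.

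The only point requiring care is the preservation of the BMO coefficient bound under randomization. This is precisely where the hypothesis that the cancellation lies in the $(n+1)$-th slot enters: only then does $\Delta_{J^2}^2$ (respectively $\Delta_{J^1}^1$) act cleanly on a single index of the coefficient, so that the randomizing signs $\epsilon_{K^2}$ commute with the $\ell^2$ norm defining the BMO bound. If the cancellation were in one of the inner slots $i \in \{1,\ldots,n\}$, then $\Delta_{J^2}^2 (S\pi)_k$ would entangle $K^2$ with the coefficients of $f_i$ and the randomized sum would no longer be a partial paraproduct of the same form — which is exactly why the lemma is stated with the cancellation in the $(n+1)$-th slot.
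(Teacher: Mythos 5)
Your argument is correct, but it is a genuinely different route from the one the paper takes. You exploit the fact that, when the cancellative Haar function sits in the $(n+1)$-th slot, the martingale difference $\Delta^2_{J^2}$ (resp.\ $\Delta^1_{J^1}$, $\Delta_{J^1\times J^2}$) diagonalizes the output sum, so Khintchine converts the square function into an average of operators $U_\epsilon$ whose coefficients are $\epsilon\, a_{K,(I_i^1)}$ with unimodular $\epsilon$; since the normalization \eqref{eq:PPNorma} is a Carleson-type $\ell^2$ condition on $|a_{K,(I_i^1)}|$ only, each $U_\epsilon$ is again an admissible partial paraproduct and Theorem \ref{thm:partials} applies uniformly in the signs, with the same $2^{\max_j k_j\beta}$ dependence. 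The paper instead does not pass through the scalar bound at all: it reruns the proof of \cite{LMV:gen}*{Theorem 6.7} in a vector-valued form, dualizing with a sequence $(f_{n+1,K})_K\in L^{p'}(\ell^2)$, extrapolating down to the $L^2\times\cdots\times L^2\to L^{2/(n+1)}$ case with $A_{(2,\ldots,2)}$ weights, and proving a variant of Proposition \ref{prop:5.8} to handle the resulting quantities $F_{n+1,K^1}$. Your reduction is shorter and makes transparent exactly why the cancellation must be in the $(n+1)$-th slot (otherwise the randomized sum is no longer of the form \eqref{eq:Spi}), at the cost of treating the known weighted paraproduct bound as a black box; the paper's longer argument keeps the machinery explicit (which is then echoed elsewhere in the commutator estimates) but proves nothing more than what your randomization argument yields. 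The only points you should make explicit are routine: the termwise application of $\Delta^2_{J^2}$ (justified by the usual convention of finitely supported Haar expansions) and that in the bi-parameter case one uses independent signs indexed by rectangles, so scalar Khintchine over the countable index set $\calD^1\times\calD^2$ suffices; neither affects correctness.
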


\begin{proof}
The result follows almost identically to the proof of \cite{LMV:gen}*{Theorem 6.7.}.
We take the partial paraproduct of the form
\[
\Big(\sum_{K\in \calD} \Big(\sum_{(I_i^1)^{(k_i)}=K^1}a_{K, (I_i^1)} \prod_{i=1}^{n} \Ave{f_i,\wt h_{I_i^1} \otimes \frac{1_{K^2}}{|K^2|}} h_{I_{n+1}^1}^0\otimes h_{K^2}^0\Big)^2 \Big)^{\frac 12}.
\]
Using the dualisation trick in \cite{LMV:gen} for $p>1$, we choose a sequence of functions $(f_{n+1,K})_K\in L^{p'}(\ell^2)$ with norm $\|(f_{n+1,K})_K\|_{L^{p_1}(\ell^2)}\le 1 $, and we look at
\begin{align*}
&\bigg|\sum_{K\in \calD} \sum_{(I_i^1)^{(k_i)}=K^1}a_{K, (I_i^1)}\prod_{i=1}^{n} \Ave{f_i,\wt h_{I_i^1} \otimes \frac{1_{K^2}}{|K^2|}} \langle f_{n+1,K} w, h_{I_{n+1}^1}^0\otimes h_{K^2}^0\rangle\bigg|\\
&\le \sum_{K^1}\sum_{(I_i^1)^{(k_i)}=K^1 } \frac{\prod_{i=1}^{n+1}|I_i^1|^{\frac 12}}{|K^1|^2}\int_{\R^{d_2}}\Big( \sum_{K^2}\Big| A_{K^2}(\langle f_1,\wt h_{I_1^1}\rangle, \ldots, \langle f_{n+1,K} w, h_{I_{n+1}^1}^0\rangle)\Big|^2\frac{1_{K^2}}{|K^2|}\Big)^{\frac 12},
\end{align*} 
where
\[
A_{K^2}(g_1,\ldots, g_{n+1})=\Big(\prod_{i=1}^n\langle g_i\rangle_{K^2}\Big)\langle g_{n+1}, h_{K^2}^0\rangle.
\]
We write
\[
|I_i^1|^{-\frac 12}\Big\langle f_i, h_{I_i^1}^0\otimes \frac{1_{K^2}}{|K^2|}\Big\rangle= \langle f_i\rangle_{K}+ \sum_{\ell_3=0}^{k_i-1}\sum_{(L_i^1)^{(\ell_i)}=K^1}\Big\langle f_i, h_{L_i^1}\otimes \frac{1_{K^2}}{|K^2|}\Big\rangle \langle h_{L_i^1}\rangle_{I_i^1}
\] for $i \in \{1,2,\ldots,n\}$ whenever we have the non-cancellative Haar function, expect when complexity is zero.

We are reduced to bounding
\begin{equation}
\begin{split}
&\sum_{K^1}\sum_{(L_i^1)^{(\ell_i)}=K^1 } \frac{\prod_{i=1}^n|L_i^1|^{\frac 12}}{|K^1|^n}\\
&\times\int_{\R^{d_2}}\Big( \sum_{K^2}\Big| A_{K^2}(\langle f_1,\wt h_{L_1^1}\rangle,\ldots, \langle f_n, \wt h_{L_n^1}\rangle, \langle f_{n+1,K} w, h_{L_{n+1}^1}^0\rangle)\Big|^2\frac{1_{K^2}}{|K^2|}\Big)^{\frac 12},
\end{split}
\end{equation}
where $\wt h_{L_i^1} = h_{L_i^1}$ for at least one index $i$, and $\ell_{n+1}=k_{n+1}$. Moreover, if $\wt h_{L_i^1} = h_{L_i^1}^0,$ then we have complexity $\ell_i = 0.$

We consider an example to see how we can use the idea in \cite{LMV:gen} in this setting. The goal is to prove
\[
\| g\|_{L^1}\le \Big(\prod_{i=1}^n \|f_i w_i\|_{L^{p_i}} \Big)\| \widetilde f_{n+1} w^{-1} \|_{L^{p'}},
\]where
\[
\widetilde f_1:=\Big(\sum_{K} |f_{1,K} w|^2\Big)^{\frac 12}
\]and $g$ equals to
\begin{align*}
&\sum_{K^1}\sum_{(L_i^1)^{(\ell_i)}=K^1 } \frac{\prod_{i=1}^n|L_i^1|^{\frac 12}}{|K^1|^n} \frac{1_{K^1}}{|K^1|}\\
&\hspace{3cm}\times\Big( \sum_{K^2}\Big| A_{K^2}(\langle f_1,\wt h_{L_2^1}\rangle,\ldots, \langle f_n, \wt h_{L_n^1}\rangle, \langle f_{n+1,K} w, h_{L_{n+1}^1}^0\rangle)\Big|^2\frac{1_{K^2}}{|K^2|}\Big)^{\frac 12}.
\end{align*}
By extrapolation \cite{LMO},  we just need to prove that
\[
\| gv\|_{L^{\frac{2}{n+1}}}\le \prod_{i = 1}^n \|f_i v_i\|_{L^{2}} \| \widetilde f_{n+1}v_{n+1} \|_{L^{2}},\qquad (v_1,\ldots, v_{n+1})\in A_{(2,\ldots,2)}.
\]
Following the proof in \cite{LMV:gen}, everything will be the same except that for $\tilde f_{n+1}$, we need to control
\[
\Big\| \Big(\sum_{K^1}|F_{{n+1},K^1}|^2\Big)^{\frac 12} v_{n+1}^{-1}\Big\|_{L^2}= \Big\| \Big(\sum_{K^1}|F_{{n+1},K^1}|^2\Big)^{\frac 12} \gamma_{n+1}^{\frac 12}\Big\|_{L^2},
\]where
\[
F_{n+1,K^1}=1_{K^1}\sum_{(I_{n+1}^1)^{(k_{n+1})}=K^1}\frac{|I_{n+1}^1|^{\frac 12}}{|K^1|}\Big(\sum_{K^2} \frac{ \langle |f_{{n+1},K}| w, h_{I_{n+1}^1}^0\otimes h_{K^2}^0\rangle^2}{\langle \gamma_{n+1}\rangle_K^2 }\frac{1_{K^2}}{|K^2|}\Big)^{\frac 12},
\]and $\gamma_{n+1}=v_{n+1}^{-2}$. For brevity, in below we just write $\sum_{I_{n+1}^1}$ instead of $\sum_{(I_{n+1}^1)^{(k_{n+1})}=K^1}$.

So it remains to prove some variant of Proposition \ref{prop:5.8}, which is straightforward. In fact, for the above model case, since $\gamma_{n+1}\in A_{2(n+1)}$, we have
\[
(\gamma_{n+1}^{-\frac 12}, \gamma_{n+1}^{\frac {1}{2n+1}}, \cdots, \gamma_{n+1}^{\frac {1}{2n+1}} )\in A_{(2,\infty, \cdots, \infty)}.
\]
Thus,
\begin{align*}
F_{{n+1},K^1}\le 1_{K^1}\sum_{I_{n+1}^1}\frac{|I_{n+1}^1|^{\frac 12}}{|K^1|}\Big(\sum_{K^2} \langle |f_{{n+1},K}|w, h_{I_{n+1}^1}^0\otimes h_{K^2}^0\rangle^2 \langle \gamma_{n+1}^{-\frac {1}{2n+1}}\rangle_K^{2\cdot (2n+1)} \frac{1_{K^2}}{|K^2|}\Big)^{\frac 12}.
\end{align*}
If $k_{n+1}=0$, we simply have
\[
F_{{n+1},K^1}\le \Big(\sum_{K^2} \big[ M_{\calD}( |f_{{n+1},K}|w, \gamma_{n+1}^{-\frac {1}{2n+1}},\cdots, \gamma_{n+1}^{-\frac{1}{2n+1}})\big]^2\Big)^{\frac 12}.
\]
Then it is just a matter of vector-valued estimates for the multilinear maximal function and we are done. If $k_1>0$, then let $s>1$ be such that $d_1/{s'}$ is sufficiently small, we have
\begin{align*}
&F_{n+1,K^1}\\
&\le 2^{\frac{k_{n+1}d_1 }{s'}} 1_{K^1}\Big( \sum_{I_{n+1}^1}\frac{|I_{n+1}^1|^{\frac s2}}{|K^1|^s}\Big(\sum_{K^2} \frac{ \langle |f_{n+1,K}|w, h_{I_{n+1}^1}^0\otimes h_{K^2}^0\rangle^2}{\langle \gamma_{n+1}\rangle_K^2 }\frac{1_{K^2}}{|K^2|}\Big)^{\frac s2}\Big)^{\frac 1s}\\
&\le 2^{\frac{k_{n+1}d_1 }{s'}} 1_{K^1}\\
&\quad\otimes\Big( \sum_{I_{n+1}^1}\frac{|I_{n+1}^1|^{\frac s2}}{|K^1|^s}\Big(\sum_{K^2} \langle |f_{n+1,K}|w, h_{I_{n+1}^1}^0\otimes h_{K^2}^0\rangle^2 \bla \langle \gamma_{n+1}\rangle_{K^1,1}^{-\frac{1}{2n+1}}\bra_{K^2}^{2\cdot (2n+1)} \frac{1_{K^2}}{|K^2|}\Big)^{\frac s2}\Big)^{\frac 1s}\\
&\le 2^{\frac{k_{n+1}d_1 }{s'}} 1_{K^1}\\
&\quad\otimes\Big( \sum_{I_{n+1}^1}\frac{|I_{n+1}^1|^{\frac s2}}{|K^1|^s}\Big(\sum_{K^2} \big[ M_{\calD^2} (\langle |f_{n+1,K}|w, h_{I_{n+1}^1}^0\rangle, \langle \gamma_{n+1}\rangle_{K^1,1}^{-\frac{1}{2n+1}},\cdots, \langle \gamma_{n+1}\rangle_{K^1,1}^{-\frac{1}{2n+1}})\big]^2\Big)^{\frac s2}\Big)^{\frac 1s}.
\end{align*}
Then the fact that
\[
(\langle\gamma_{n+1}\rangle_{K^1,1}^{-\frac 12}, \langle\gamma_{n+1}\rangle_{K^1,1}^{\frac{1}{2n+1}}, \cdots, \langle\gamma_{n+1}\rangle_{K^1,1}^{\frac{ 1}{2n+1}} )\in A_{(2,\infty, \cdots, \infty)}(\R^{d_2})
\]
with characteristic independent of $K^1$ gives us that
\begin{align*}
&\Big\| \Big(\sum_{K^1}|F_{n+1,K^1}|^2\Big)^{\frac 12} \gamma_{n+1}^{\frac 12}\Big\|_{L^2}^2\\
&\le 2^{\frac{2k_{n+1}d_1 }{s'}}\sum_{K^1}\int_{\R^{d_2}}\langle\gamma_{n+1}\rangle_{K^1,1}\\
&\times\Big( \sum_{I_{n+1}^1}\frac{|I_{n+1}^1|^{\frac s2}}{|K^1|^{\frac s2}}\Big(\sum_{K^2} \big[ M_{\calD^2} (\langle |f_{n+1,K}|w, h_{I_{n+1}^1}^0\rangle, \langle \gamma_{n+1}\rangle_{K^1,1}^{-\frac{1}{2n+1}},\cdots, \langle \gamma_{n+1}\rangle_{K^1,1}^{-\frac{1}{2n+1}})\big]^2\Big)^{\frac s2}\Big)^{\frac 2s} \\
&\lesssim 2^{\frac{2k_{n+1}d_1 }{s'}}\sum_{K^1}\int_{\R^{d_2}}\Big( \sum_{I_{n+1}^1}\frac{|I_{n+1}^1|^{\frac s2}}{|K^1|^{\frac s2}}\Big(\sum_{K^2} \langle |f_{n+1,K}|w, h_{I_{n+1}^1}^0\rangle^2\Big)^{\frac s2}\Big)^{\frac 2s}\langle \gamma_{n+1}\rangle_{K^1,1}^{-1}\\
&\le 2^{\frac{2k_{n+1}d_1 }{s'}}\sum_{K^1}\int_{\R^{d_2}}\Big( \sum_{I_{n+1}^1}\frac{|I_{n+1}^1|^{\frac 12}}{|K^1|^{\frac 12}}\Big(\sum_{K^2} \langle |f_{n+1,K}|w, h_{I_{n+1}^1}^0\rangle^2\Big)^{\frac 12}\Big)^{ 2}\langle \gamma_{n+1}\rangle_{K^1,1}^{-1}.
\end{align*}
By Minkowski's inequality,
\[
\Big(\sum_{K^2} \langle |f_{n+1,K}|w, h_{I_{n+1}^1}^0\rangle^2\Big)^{\frac 12}\le \Big\langle \big(\sum_{K^2}|f_{n+1,K}w|^2\big)^{\frac 12}, h_{I_{n+1}^1}^0\Big\rangle.
\]
We are left with estimating
\begin{align*}
&\sum_{K^1}\int_{\R^{d_2}}\Big( \sum_{(I_{n+1}^1)^{(k_{n+1})}=K^1}\frac{|I_{n+1}^1|^{\frac 12}}{|K^1|^{\frac 12}}\Big\langle \big(\sum_{K^2}|f_{n+1,K}w|^2\big)^{\frac 12}, h_{I_{n+1}^1}^0\Big\rangle\Big)^{ 2}\langle \gamma_{n+1}\rangle_{K^1,1}^{-1}\\
&\hspace{4cm}= \int_{\R^d}\sum_{K^1}\frac{\Big \langle \big(\sum_{K^2}|f_{n+1,K}w|^2\big)^{\frac 12}\Big\rangle_{K^1}^2 }{ \langle \gamma_{n+1}\rangle_{K^1,1}^2}1_{K^1} \gamma_{n+1}\\
&\hspace{4cm}\le \int_{\R^d}\sum_{K^1} \Big \langle \big(\sum_{K^2}|f_{n+1,K}w|^2\big)^{\frac 12}\Big\rangle_{K^1}^2 \langle \gamma_{n+1}^{-\frac{ 1}{2n+1}}\rangle_{K^1,1}^{2\cdot (2n+1)}1_{K^1} \gamma_{n+1}\\
&\hspace{4cm}\lesssim \int_{\R^d}  \sum_{K^1} \big(\sum_{K^2}|f_{n+1,K}w|^2\big)^{ \frac 12\cdot 2}\gamma_{n+1}^{-1}\\
&\hspace{4cm}= \| \widetilde f_{n+1} v_{n+1}\|_{L^2}^2.
\end{align*}
This completes the proof.
The case $p\le 1$ follows from extrapolation \cite{LMMOV}.
\end{proof}

\section{The upper bound}

In this section, we prove the following theorem.
\begin{thm}\label{thm:mainModel}
 Let $\vec p = (p_1, \ldots, p_n)$ so that $1 < p_i \le \infty$, define $1/p = \sum_{i=1}^n 1/p_i > 0.$
Let $(w_1, \ldots, w_n),(\lambda_1, w_2, \ldots, w_n) \in A_{\vec p}$ and let the associated Bloom weight $\nu = w_1 \lambda_1^{-1} \in A_\infty.$  Assume that
$b \in \bmo(\nu).$

For a multilinear bi-parameter dyadic model operator $U,$ defined in the section \ref{sec:model}, we have
$$
\|[b,U]_1(f_1, \ldots, f_n) \nu^{-1}w \|_{L^p} \lesssim_k \| b\|_{\bmo(\nu)} \prod_{i=1}^n \|f_i w_i\|_{L^{p_i}}.
$$
Here the constant depends on the complexity $k = (k_1,\ldots,k_n) = ((k_1^1,k_1^2),\ldots,(k_{n+1}^1,k_{n+1}^2))$ whenever $U$ is a shift or a partial paraproduct. Dependence of the complexity is
\begin{equation}\label{eq:complx}
\begin{cases}
C_\beta 2^{\max_i k_i \beta} \quad \text{ for  every } \beta \in (0,1],\qquad \text{if } U\text{ is a partial paraproduct}\\
(1 + \max\{ k_1^1,k_1^2,k_{n+1}^1 ,k_{n+1}^2\})^{\frac12}, \qquad \text{if } U \text{ is a shift.}
\end{cases}
\end{equation}
\end{thm}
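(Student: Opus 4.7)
The main strategy is to expand the commutator via the bi-parameter Haar decomposition of the pointwise product $bg$. Recall that we have
$$
bg = \sum_{(j_1,j_2) \in \{1,2,3\}^2} \Pi_{j_1,j_2}(b,g),
$$
where the $\Pi_{j_1,j_2}$ are the nine bi-parameter paraproduct-type expansions introduced in Section 4. Applying this with $g = U(f_1,\ldots,f_n)$ and (slot-wise on the first variable) with $g = f_1$, and substituting into the definition
$$
[b,U]_1(f_1,\ldots,f_n) = b\, U(f_1,\ldots,f_n) - U(bf_1, f_2,\ldots,f_n),
$$
writes the commutator as a finite sum of terms. The analysis then splits naturally according to whether $b$ carries a Haar cancellation in the expansion ($(j_1,j_2) \neq (3,3)$), which I will call the \emph{BMO-paraproduct} terms, or not.

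The BMO-paraproduct terms $\Pi_{j_1,j_2}(b, U(f_1,\ldots,f_n))$ are handled directly by Proposition \ref{prop:bloomPara}: the hypotheses imply the joint condition $(w_1,\ldots,w_n,\nu w^{-1}) \in A^*_{\vec p}$ (as shown in the discussion preceding Definition \ref{def:Astar}), while $U$ is bounded from $L^{p_1}(w_1) \times \cdots \times L^{p_n}(w_n)$ into $L^p(w)$ by Theorems \ref{thm:shift}, \ref{thm:partials} and the full paraproduct theorem. The input-side terms $U(\Pi_{j_1,j_2}(b, f_1), f_2,\ldots,f_n)$ are handled symmetrically: first bound $\Pi_{j_1,j_2}(b, f_1)$ in $L^{p_1}(\lambda_1^{p_1})$ by Proposition \ref{prop:bloomPara} in its one-variable form, and then apply the $A_{\vec p}$ boundedness of $U$ with the modified tuple $(\lambda_1, w_2,\ldots,w_n) \in A_{\vec p}$.

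The substantive work is the remaining $(3,3)$ term together with the \emph{collision} terms where the cancellation of $b$ lands on the same dyadic rectangle as an output-side Haar cancellation of $U$. Here one writes out $U$ explicitly in its definition (see Section \ref{sec:model}) and telescopes the average $\ave{b}_{K^{(k)}}$ appearing against some ancestor cube as $\ave{b}_{K} + \sum_{j=0}^{k-1} \ave{b, h_{K^{(j)}}}|K^{(j)}|^{-1/2}$. This produces mixed paraproduct-like expressions in which the Haar coefficients of $b$ are paired with averages of the form $\prod_i \ave{f_i w_i \cdot w_i^{-1}}$. Inserting a factor $\ave{\sigma}_K \ave{\sigma}_K^{-1} = 1$ for a suitable choice $\sigma$ among $\{w_i^{-p_i'}\}$ or $(\nu^{-1}w)^p$, one applies the Muckenhoupt--Wheeden type bounds, Lemmas \ref{lem:MWestimate} and \ref{lem:MWuniform}, to convert $\sum_R \ave{b,h_R}\ave{\sigma}_R\varphi_R$ into a multilinear square function weighted by $\sigma\nu$ (which is in $A_\infty$ by Lemma \ref{lem:RH} since both $\sigma$ and $\nu$ are). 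Theorem \ref{thm:multilinSF}, Lemma \ref{lem:SFpartial} and the weighted paraproduct bounds of Propositions \ref{prop:bloomParaEta1}--\ref{prop:bloomParaMixedEta} then close the estimate.

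The main obstacle is carrying out this last step while tracking the complexity dependence \eqref{eq:complx}. For a shift, all directions behave symmetrically and the telescoping yields a sum over $0 \le j \le \max_i k_i$ of martingale-orthogonal pieces, producing the factor $(1+\max_i k_i)^{1/2}$ after a Cauchy--Schwarz in $j$; none of the other factors depend on the complexity. For a partial paraproduct, the non-cancellative direction inherits the $C_\beta 2^{\max_i k_i \beta}$ bound from Theorem \ref{thm:partials} (applied to the $U$-half of each collision), while the cancellative direction can be handled without any such loss. The bookkeeping needed to isolate collisions type-by-type, and to ensure that each collision term really does produce an $A_\infty$ weight $\sigma\nu$ rather than some more problematic product, is the technical heart of the argument.
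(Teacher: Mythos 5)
There is a genuine gap, and it sits at the heart of your plan for the ``legal'' terms. You propose to bound $U(\Pi_{j_1,j_2}(b,f_1),f_2,\ldots,f_n)$ by first estimating $\Pi_{j_1,j_2}(b,f_1)$ from $L^{p_1}(w_1^{p_1})$ to $L^{p_1}(\lambda_1^{p_1})$ and then invoking the $A_{\vec p}$ bound for $U$ with the tuple $(\lambda_1,w_2,\ldots,w_n)$, and similarly to treat $\Pi_{j_1,j_2}(b,U(\vec f))$ ``directly'' by Proposition \ref{prop:bloomPara}. Neither step is available under genuinely multilinear weights. A linear Bloom bound at exponent $p_1$ would require $w_1^{p_1},\lambda_1^{p_1}\in A_{p_1}$ (or a joint linear-type condition at that exponent), but the hypotheses $(w_1,\ldots,w_n),(\lambda_1,w_2,\ldots,w_n)\in A_{\vec p}$ only yield, via Lemma \ref{lem:lem1}, information such as $w_1^{-p_1'}\in A_{np_1'}$ and $w^p\in A_{np}$; the individual weights need not lie in any linear Muckenhoupt class, and quantities like $\ave{w^{-p'}}_R$ are not controlled at all (this is precisely the point of the genuinely multilinear class, emphasized in the introduction). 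For the same reason the output-side term cannot be factored through an intermediate $L^p(w^p)$ space, and Proposition \ref{prop:bloomPara} does not apply to the composition $\Pi_{j_1,j_2}(b,U(\vec f))$, which is not itself a paraproduct of the form \eqref{eq:bloomPara}. The paper's proof is structured exactly to avoid such factorizations: it dualizes the whole form against $f_{n+1}$, inserts the dual weights $\sigma_i=w_i^{-p_i'}$, $\sigma_{n+1}=(\nu^{-1}w)^p$, $\eta_1=\lambda_1^{-p_1'}$ into the averages (e.g.\ replacing $1_{J^2}/|J^2|$ by $\eta_1 1_{J^2}/\eta_1(J^2)$ so that an $\eta_1$-weighted paraproduct $\Pi_{b,\eta}$ and the adjoint operator $S_k^{1*}$ appear, with the adjoint weight tuple handled by Lemma \ref{lem:lem7}), and then closes with the Muckenhoupt--Wheeden Lemmas \ref{lem:MWestimate}--\ref{lem:MWuniform} (exploiting $\sigma\nu\in A_\infty$), the multilinear square function bounds of Theorem \ref{thm:multilinSF}, Proposition \ref{prop:5.8}, and Lemma \ref{lem:SFpartial}. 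Your sketch of the illegal/collision terms (telescoping the averages of $b$, Cauchy--Schwarz in the complexity parameter) is in the right spirit, but without the dual-weight mechanism the legal terms --- which you treat as the easy half --- are not actually under control.

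Two further points. First, the blanket bi-parameter expansion of $bg$ into all nine $\Pi_{j_1,j_2}$ regardless of the cancellativity of the Haar functions in the relevant slot is not what the paper does: the expansion is adapted (full, one-parameter, or none) to where the cancellative Haar functions sit, and the mixed-cancellation terms $\Pi_{3,j},\Pi_{j,3}$ are not disposable by Proposition \ref{prop:bloomPara} but require the weighted paraproducts of Propositions \ref{prop:bloomParaEta1} and \ref{prop:bloomParaMixedEta} together with Lemma \ref{lem:SFpartial}; claiming that only $(3,3)$ needs combining understates this. Second, your argument (like the paper's dual-form estimates) only makes sense for $p>1$, but the theorem allows $1/p=\sum_i 1/p_i>0$ with $p\le 1$; the paper reaches the quasi-Banach range by the new two-weight extrapolation Theorem \ref{thm:extrapo}, a step your proposal omits entirely.
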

 We divide the analysis of each model operator into different subsections.
 
 The boundedness of these model operator commutators yields the boundedness of the commutators of Calder\'on-Zygmund operators via Proposition \ref{prop:Representation}. Use of Proposition \ref{prop:Representation} and complexity dependences \eqref{eq:complx} restricts the kernel regularity of  $(\omega_1,\omega_2)$-CZOs in Theorem \ref{thm:main}. For the paraproduct free CZOs, we can use milder kernel regularity, where we have that $\omega_i \in \text{Dini}_{3/2}, i = 1,2.$  By paraproduct free, we mean that the paraproducts
in the dyadic representation of $T$ vanish, which could also be stated in terms of
(both partial and full) ``$T1 = 0$'' type conditions. In the
paraproduct free case, the reader can think of convolution form SIOs. 
Otherwise, we must use the standard H\" older type kernel regularity $\omega_i(t) = t^{\alpha_i}, \alpha_i\in (0,1]$. 

In the proof, we consider the boundedness $\prod_{i = 1}^n L^{p_i}(w_i^{p_i}) \to L^{p}(\nu^{-p} w^p)$ for $p > 1$ since Theorem \ref{thm:extrapo} will extend the result to the quasi-Banach range. Recall the notation of dual weights: $\sigma_i = w_i^{-p_i'}, \sigma_{n+1} = (\nu^{-1}w )^{p},$ and $\eta_1 = \lambda_1^{-p_1'}.$ Here we chose to consider the commutators acting on the first function slot as the other ones are symmetrical.

\subsection*{The shift case}
We consider the following commutator
$$
[b, S_{k}]_1(f_1,\ldots,f_n)= bS_{k}(f_1,\ldots,f_n) - S_{k}(bf_1,\ldots, f_n),
$$
where $S_{k} := S_{(k_1,\ldots, k_{n+1})}^{1,2}$ is a standard multilinear bi-parameter shift.

The idea is to expand the commutator so that a product $bf$ paired with Haar functions is expanded in the bi-parameter fashion only if both of the Haar functions are cancellative. In a mixed situation, we expand only in $\R^{d_1}$ or $\R^{d_2}$, and in the remaining fully non-cancellative situation we do not expand at all. This strategy has been important in the recent multi-parameter results -- see e.g. \cites{EA, AMV, LMV:Bloom, LMV:Bloom2}.

We focus on a commutator, where the cancellation appears in a mixed situation on first and last slots, that is, we have a commutator that is expanded as follows
\begin{equation}
\sum_{j_1=1}^3 \Pi^{1}_{j_1}(b,S_k(f_1, \ldots, f_{n})) - \sum_{j_2 =1}^3S_k(\Pi_{j_2}^2(b,f_1),\ldots, f_n).\label{eq:Case1}
\end{equation}
This case essentially gathers all the methods for estimating these commutators. More involved expansions are considered with partial paraproducts.

Both terms are handled separately whenever we have a bounded paraproduct, that is $\Pi_{j_i}^i, j_i \neq 3$ (or bi-parameter $\Pi_{j_1,j_2}$, $(j_1,j_2) \neq (3,3)$). Otherwise, we need to add and subtract certain averages of the function $b$ to obtain enough cancellation. We analyse the second term in \eqref{eq:Case1} as the first term is similar (swap the roles of functions $f_1$ and $f_{n+1}$ together with weights $\eta_1$ and $w^p$).

We begin with the term
\begin{align*}
&S_k( \Pi_{1}^2(b,f_1),\ldots, f_n) \\
&= \sum_{K^1\times K^2 \in \calD^1 \times \calD^2} \sum_{\substack{I_i^j \in \calD^j\\ (I_i^j)^{(k_i^j)} = K^j\\ i= 1,\ldots,n+1, j =1,2}} &a_{K(I_i^j)}\Big\langle\sum_{J^2 \in \calD^2} \ave{b,h_{J^2}}_2 \ave{f_1, h_{J^2}}_2 \otimes h_{J^2}h_{J^2} , h_{I_1^1}^0 \otimes h_{I_1^2} \Big\rangle \\
&&\times \prod_{i=2}^n \ave{f_{i},\wt h_{I_i}} h_{I_{n+1}^1} \otimes h_{I_{n+1}^2}^0.
\end{align*}
By the zero average of Haar functions, we always have $J^2 \subset I_1^2$. Now the important observation is that when $J^2 \subsetneq I_1^2$ we must have $h_{J^2}h_{J^2}=\frac{1_{J^2}}{|J^2|}$ and we can replace $\ave{\frac{1_{J^2}}{|J^2|}, h_{I_1^2}}$ with $\ave{ \frac{\eta_1 1_{J^2}}{\eta_1(J^2)}, h_{I_1^2}}$. Thus, we can change the order of the operators, and we can split the dual form of the term as follows
\begin{align*}
&\ave{S_k( \Pi_{1}^2(b,f_1),\ldots, f_n),f_{n+1}} \\
&= \sum_{K \in \calD} \sum_{\substack{I_i^j \in \calD^j\\ (I_i^j)^{(k_i^j)} = K^j\\ i= 1,\ldots, n+1 , j =1,2}} a_{K(I_i^j)}\Big\langle\sum_{J^2\subsetneq I_1^2\in \calD^2} \ave{b,h_{J^2}}_2 \ave{f_1, h_{J^2}}_2  \frac{ \eta_1 1_{J^2}}{\eta_1(J^2)}, h_{I_1^1}^0 \otimes h_{I_1^2} \Big\rangle \\
&\qquad\times\prod_{i=2}^n \ave{f_{i},\wt h_{I_i}}\ave{f_{n+1}, h_{I_{n+1}^1} \otimes h_{I_{n+1}^2}^0 }+ \sum_{K\in \calD} \sum_{\substack{I_i^j \in \calD^j\\ (I_i^j)^{(k_i^j)} = K^j\\ i= 1,\ldots, n+1 , j =1,2}} a_{K(I_i^j)} \Big\langle \ave{b,h_{I_1^2}}_2 \ave{f_1, h_{I_1^2}}_2, h_{I_1^1}^0\Big\rangle \\
&\qquad\times\langle h_{I_1^2}h_{I_1^2}, h_{I_1^2}\rangle\prod_{i=2}^n \ave{f_{i},\wt h_{I_i}}\ave{f_{n+1}, h_{I_{n+1}^1} \otimes h_{I_{n+1}^2}^0 }
\\
&=\int_{\R^{d_1}} \sum_{J^2\in \calD^2} \ave{b,h_{J^2}}_2 \ave{f_1, h_{J^2}}_2 \ave{S^{1*}_k(f_2,\ldots, f_{n+1})}_{J^2,2}^{\eta_1}\\
&\hspace{4cm}-\int_{\R^{d_1}} \sum_{J^2\in \calD^2} \ave{b,h_{J^2}}_2 \ave{f_1, h_{J^2}}_2\ave{S_{k,J^2}^{1*}(f_2,\ldots, f_{n+1})}_{J^2,2}^{\eta_1}+E, \\
\end{align*}
 where $S_{k,J^2}^{1*}$ differs from the usual adjoint so that we have $I_1^2\subset J^2.$ We do not explicitly handle the term $E$ as it is similar to the case $j_2=2$ (note that we have more cancellation than we need). Since the truncated operator $S_{k,J^2}^{1*}$ can be dominated by the $A_{\infty}$ weighted square functions lower bound, we can drop the dependence on cube $J^2.$ Then, the estimations of the first two terms are very similar, hence one might think of $S_{k}^{1*}$ as such or as $S_\calD^2 S_k^{1*}$ below.  The boundedness follows simply by using Proposition \ref{prop:bloomParaEta1} and the boundedness of multilinear shifts. Namely,
\begin{align*}
\int_{\R^{d_1}} \sum_{J^2\in \calD^2} \ave{b,h_{J^2}}_2 \ave{f_1, h_{J^2}}_2 &\ave{S_k^{1*}(f_2,\ldots, f_{n+1})}_{J^2,2}^{\eta_1} \\
&\lesssim \|b\|_{\bmo(\nu)}\|f_1w_1\|_{L^{p_1}} \|  S_k^{1*}(f_2,\ldots, f_{n+1})\|_{L^{p_1'}(\eta_1)}\\
&\lesssim \|b\|_{\bmo(\nu)}\prod_{i=1}^n\|f_i w_i\|_{L^{p_i}} \|f_{n+1} \nu w^{-1}\|_{L^{p'}}
\end{align*}
since $\eta_1^{1/p_1'} = \lambda_1^{-1} = \nu w^{-1} \prod_{i =2}^n w_i $ and $(w_2,\ldots, w_n, \nu w^{-1} )\in A_{(p_2,\ldots, p_n, p')}$ by Lemma \ref{lem:lem7}.

The term, where $j_2  = 2,$ is significantly more straightforward to estimate. We consider the dual form and estimate 
\begin{align*}
&|\ave{S(\Pi_{2}^2(b,f_1),\ldots, f_n),f_{n+1}}| \\
&= \Big|\sum_{K \in \calD} \sum_{\substack{I_i^j \in \calD^j\\ (I_i^j)^{(k_i^j)} = K^j\\ i= 1,\ldots,n+1, j =1,2}} a_{K(I_i^j)}\Big\langle \ave{b,h_{I_1^2}}_2 \ave{f_1}_{I_1^2,2} , h_{I_1^1}^0 \Big\rangle 
\prod_{i=2}^n \ave{f_{i},\wt h_{I_i}}\ave{f_{n+1}, h_{I_{n+1}^1} \otimes h_{I_{n+1}^2}^0 }\Big|\\
&\leq \int_{\R^{d_1}}\sum_{K^2} \sum_{(I_1^2)^{(k_1^2)} = K^2} |I_1^2|^{1/2}|\ave{b,h_{I_1^2}}_2| \ave{| f_1|}_{I_1^2,2} A_{K^2, (k_{n+1}^1,k_{i_1^1}^{1},k_{i_1^2}^2)}(f_2,\ldots, f_{n+1})\\
&= \int_{\R^{d_1}}\sum_{K^2} \sum_{(I_1^2)^{(k_1^2)} = K^2} |I_1^2|^{1/2}|\ave{b,h_{I_1^2}}_2\ave{\sigma_1}_{I_1^2,2}| \frac{\ave{| f_1|}_{I_1^2,2}}{\ave{\sigma_1}_{I_1^2,2}} A_{K^2, (k_{n+1}^1,k_{i_1^1}^{1},k_{i_1^2}^2)}(f_2,\ldots, f_{n+1})
%\sum_{K^1} \ave{|\Delta_{K^1 \times K^2, (k_2^1,k_2^2)}f_2|}_{K^1 \times K^2}\prod_{i=3}^n \ave{|f_{i}|}_{K} \ave{|\Delta_{K^1,k_{n+1}^1}^1 f_{n+1}|}_{K^1\times K^2}\frac{1_{K^1}}{|K^1|},
\end{align*}
where 
$A_{K^2, (k_{n+1}^1,k_{i_1^1}^{1},k_{i_1^2}^2)}, i_1^m \in \{2,\ldots, n\}$ is from family of operators such that the square sum
$$\Big(\sum_{K^2} A_{K^2, (k_{n+1}^1,k_{i_1^1}^{1},k_{i_1^2}^2)}^2(f_2,\ldots, f_{n+1}) 1_{K^2} \Big)^{\frac 12}$$ is an $A_{2,k}$ type square function.
We use Lemma \ref{lem:MWuniform} with a fixed variable on the first parameter and get
\begin{align*}
&|\ave{S(\Pi_{2}^2(b,f_1),\ldots, f_n),f_{n+1}}| \\
&\lesssim \|b\|_{\bmo(\nu)} \int \Big(\sum_{K^2} \sum_{(I_1^2)^{(k_1^2)} = K^2}\frac{\ave{| f_1|}_{I_1^2,2}^2}{\ave{\sigma_1}_{I_1^2,2}^2} A^2_{K^2,(k_{n+1}^1,k_{i_1^1}^{1},k_{i_1^2}^2)}(f_2,\ldots, f_{n+1}) 1_{I_1^2} \Big)^{1/2} \sigma_1 \nu\\
&\leq \|b\|_{\bmo(\nu)} \int M_{\calD^2}^{\sigma_1}( f_1\sigma_1^{-1} ) A_{2,(k_{n+1}^1,k_{i_1^1}^{1},k_{i_1^2}^2)}(f_2,\ldots, f_{n+1})  \sigma_1 \nu \\
&\leq \|b\|_{\bmo(\nu)}\| M_{\calD^2}^{\sigma_1}( f_1\sigma_1^{-1} )\|_{L^{p_1}(\sigma_1)} \| A_{2,(k_{n+1}^1,k_{i_1^1}^{1},k_{i_1^2}^2)}(f_2,\ldots, f_{n+1}) \lambda_1^{-1} \|_{L^{p_1'}} \\
&\lesssim \|b\|_{\bmo(\nu)}\|f_1 w_1 \|_{L^{p_1}} \Big\| A_{2,(k_{n+1}^1,k_{i_1^1}^{1},k_{i_1^2}^2)}(f_2,\ldots, f_{n+1}) \nu w^{-1} \prod_{i=2}^n w_i \Big\|_{L^{p_1'}} \\
&\lesssim \|b\|_{\bmo(\nu)} \prod_{i=1}^n\|f_i w_i\|_{L^{p_i}} \|f_{n+1} \nu w^{-1}\|_{L^{p'}}.
\end{align*}
In the above estimates, it is enough to note that the maximal function is bounded since, by Fubini's theorem, we can work with a fixed variable on the first parameter and use the classical one-parameter result.

Lastly, we are left with the paraproducts of the illegal form
\begin{align*}
&\ave{\Pi^{1}_{3}(b,S_k(f_1, \ldots, f_{n})),f_{n+1}} - \ave{S_k(\Pi_{3}^2(b,f_1),\ldots, f_n),f_{n+1}} \\
&= \sum_{K} \sum_{\substack{I_i^j \in \calD^j\\ (I_i^j)^{(k_i^j)} = K^j\\ i= 1,\dots,n+1, j =1,2}} a_{K(I_i^j)}   \ave{f_1, h_{I_1^1}^0 \otimes h_{I_1^2}} \prod_{i=2}^n \ave{f_{i},\wt h_{I_i}} \Ave{\ave{b}_{I_{n+1}^1,1}f_{n+1}, h_{I_{n+1}^1} \otimes h_{I_{n+1}^2}^0 } \\
&\qquad -\sum_{K} \sum_{\substack{I_i^j \in \calD^j\\ (I_i^j)^{(k_i^j)} = K^j\\ i= 1,\dots,n+1, j =1,2}} a_{K(I_i^j)}   \Ave{\ave{b}_{I_1^2,2}f_1, h_{I_1^1}^0 \otimes h_{I_1^2}}  \prod_{i=2}^n \ave{f_{i},\wt h_{I_i}} \ave{f_{n+1}, h_{I_{n+1}^1} \otimes h_{I_{n+1}^2}^0 } .
\end{align*}
Here we introduce the martingale blocks to the function $b.$ We write
\begin{align*}
\ave{b}_{I_{n+1}^1,1} &= \ave{b}_{I_{n+1}^1,1} - \ave{b}_{I_{n+1}^1 \times I_{n+1}^2}+\ave{b}_{I_{n+1}^1 \times I_{n+1}^2}- \ave{b}_{I_{n+1}^1 \times K^2}\\
&\hspace{6cm}+ \ave{b}_{I_{n+1}^1 \times K^2} - \ave{b}_{K^1 \times K^2}+\ave{b}_{K^1 \times K^2}
\end{align*}and likewise for $\ave{b}_{I_{1}^2,2}$. The extra $\langle b \rangle_{K^1\times K^2}$ simply cancels with the one from $\ave{b}_{I_{1}^2,2}$.
 Hence, in the commutator we can expand as follows
\begin{align}
(\ave{b}_{I_{n+1}^1,1} - \ave{b}_{I_{n+1}^1 \times I_{n+1}^2})1_{I_{n+1}^2} &= \sum_{J^2 \subset I_{n+1}^2} \Ave{b, \frac{1_{I_{n+1}^1}}{|I_{n+1}^1|} \otimes h_{J^2} } h_{J^2}, \label{eq:shiftM1}\\
\ave{b}_{I_{n+1}^1 \times K^2} - \ave{b}_{K^1 \times K^2} &= \sum_{I_{n+1}^1 \subsetneq J^1 \subset K^1}\Ave{b,h_{J^1} \otimes \frac{1_{K^2}}{|K^2|}}\ave{h_{J^1}}_{I_{n+1}^1}, \label{eq:shiftM2} \\
\ave{b}_{I_{n+1}^1 \times I_{n+1}^2}-\ave{b}_{I_{n+1}^1 \times K^2}  &=  \sum_{I_{n+1}^2 \subsetneq J^2 \subset K^2} \Ave{b, \frac{1_{I_{n+1}^1}}{|I_{n+1}^1|} \otimes h_{J^2}}\ave{h_{J^2}}_{I_{n+1}^2}. \label{eq:shiftM3}
\end{align}

Observe that we have omitted the terms raised from $\ave{b}_{I_{1}^2,2}$ because they are similar. On the other hand, we shall only work with \eqref{eq:shiftM1} and \eqref{eq:shiftM2} because \eqref{eq:shiftM3} is analogous. 

We begin with the dual form of \eqref{eq:shiftM1}
\begin{align*}
& \Big|\sum_{K} \sum_{\substack{I_i^j \in \calD^j\\ (I_i^j)^{(k_i^j)} = K^j\\ i= 1,\dots,n+1, j =1,2}} a_{K(I_i^j)} \sum_{J^2\subset I_{n+1}^2} \Ave{b,\frac{1_{I_{n+1}^1}}{|I_{n+1}^1|}\otimes h_{J^2}} |I_{n+1}^2|^{-\frac 12} \ave{f_1, h_{I_1^1}^0 \otimes h_{I_1^2}} \\
&\hspace{8cm}\times \prod_{i=2}^n \ave{f_{i},\wt h_{I_i}} \ave{f_{n+1}, h_{I_{n+1}^1} \otimes h_{J^2}} \Big|. 
\end{align*}By similar arguments as that in the proof of Lemma \ref{lem:MWestimate}, we have
\begin{align*}
&\sum_{I_{n+1}^{(k_{n+1})}=K}|I_{n+1}^1|^{\frac 12}|I_{n+1}^2|^{\frac 12}\sum_{J^2\subset I_{n+1}^2} \Ave{b,\frac{1_{I_{n+1}^1}}{|I_{n+1}^1|}\otimes h_{J^2}} |I_{n+1}^2|^{-\frac 12}  \ave{f_{n+1}, h_{I_{n+1}^1} \otimes h_{J^2}} \\
&\qquad\lesssim  \|b\|_{\bmo(\nu)}\sum_{I_{n+1}^{(k_{n+1})}=K}\int_{\R^{d_1}} h^0_{I_{n+1}^1} \int_{\R^{d_2}} \Big(\sum_{J^2\subset I_{n+1}^2}   \langle f_{n+1}, h_{I_{n+1}^1}\otimes h_{J^2}\rangle^2  \frac{1_{J^2}}{|J^2|}\Big)^{\frac 12} \nu\\
&\qquad\lesssim \|b\|_{\bmo(\nu)}\sum_{(I_{n+1}^1)^{(k_{n+1}^1)}=K^1}\int_{\R^d} h^0_{I_{n+1}^1}\otimes 1_{K^2}\Big(\sum_{J^2}   \frac{\langle f_{n+1}, h_{I_{n+1}^1}\otimes h_{J^2}\rangle^2}{\langle \sigma_{n+1}\rangle_{I_{n+1}^1\times J^2}^2}  \frac{1_{J^2}}{|J^2|}\Big)^{\frac 12}\sigma_{n+1}\nu.
\end{align*}
Then by standard calculus, we can reduce the problem to 
\begin{align*}
\int_{\R^d} \sum_{K^1} A_{K^1,k_1^2, k_{i_1^1}^1, k_{i_1^2}^2}(f_1,\cdots, f_n)1_{K^1}\hspace{-0.5cm}\sum_{(I_{n+1}^1)^{(k_{n+1}^1)}=K^1}h^0_{I_{n+1}^1}\Big(\sum_{J^2}   \frac{\langle f_{n+1}, h_{I_{n+1}^1}\otimes h_{J^2}\rangle^2}{\langle \sigma_{n+1}\rangle_{I_{n+1}^1\times J^2}^2}  \frac{1_{J^2}}{|J^2|}\Big)^{\frac 12}\sigma_{n+1}\nu,
\end{align*}
where $A_{K^1,k_1^2, k_{i_1^1}^1, k_{i_1^2}^2}(f_1,\cdots, f_n)$ is defined such that 
\[
\Big(\sum_{K^1} \big[ A_{K^1, k_1^2,k_{i_1^1}^1, k_{i_1^2}^2}(f_1,\cdots, f_n)\big]^2 1_{K^1}\Big)^{\frac 12}
\]is an $A_{2,k}$ type square function. 
Notice that $\sigma_{n+1}\nu = (\nu^{-1}w)^{p}\nu = (w^p)^\frac{1}{p}((\nu^{-1}w)^{p})^\frac{1}{p'} \in A_\infty.$
The rest follows from estimates such as H\"older's inequality, Theorem \ref{thm:multilinSF}, and Proposition \ref{prop:5.8}.

Finally, we consider the dual form of \eqref{eq:shiftM2}
\begin{align*}
&\Big|\sum_{K} \sum_{\substack{I_i^j \in \calD^j\\ (I_i^j)^{(k_i^j)} = K^j\\ i= 1,\ldots,n+1, j =1,2}} a_{K(I_i^j)} \sum_{I_{n+1}^1 \subsetneq J^1 \subset K^1}\Ave{b,h_{J^1} \otimes \frac{1_{K^2}}{|K^2|} }\ave{h_{J^1}}_{I_{n+1}^1}\prod_{i=1}^{n+1} \ave{f_{i},\wt h_{I_i}} \Big| \\
&\leq \int_{\R^{d_2}} \sum_{K^1} \sum_{\substack{J^1 \subset K^1\\ \ell(J^1) > 2^{-k_{n+1}^1}\ell(K^1)}} |J^1|^{-\frac{1}{2}} |\ave{b,h_{J^1} }_1 | \sum_{K^2}\sum_{\substack{I_i^j \in \calD^j\\ (I_i^j)^{(k_i^j)} = K^j\\
I_{n+1}^1 \subset J^1\\ i= 1,\ldots,n+1, j =1,2}} |a_{K(I_i^j)} |\prod_{i=1}^{n+1} |\ave{f_{i},\wt h_{I_i}} | \frac{1_{K^2}}{|K^2|} \\
&\leq \int_{\R^{d_2}} \sum_{K^1} \sum_{j_{n+1}^1=0}^{k_{n+1}^1-1}\sum_{(J^1)^{(j_{n+1}^1)}=K^1}  |J^1|^{\frac{1}{2}} |\ave{b,h_{J^1} }_1 |\\
&\hspace{2cm}\times \sum_{K^2} A_{K, (k_1^2, k_{i_1^1}^1,k_{i_1^2}^2)}(f_1, \ldots, f_{n}) \ave{|\Delta_{J^1, k_{n+1}^1-j_{n+1}^1}^1 f_{n+1}|}_{J^1\times K^2} 1_{K^2},
\end{align*}
where $ A_{K, (k_1^2, k_{i_1^1}^1,k_{i_1^2}^2)}$ is defined such that 
$$
\Big(\sum_{K^1}  \Big(\sum_{K^2}A_{K, (k_1^2, k_{i_1^1}^1,k_{i_1^2}^2)}(f_1, \ldots, f_{n}) 1_{K}\Big)^2\Big)^{\frac 12}
$$
is an $A_{2,k}$ type square function.

This resembles the term that we faced earlier with paraproduct $\Pi_2.$ The only meaningful difference is the extra summation. The estimations are similar when we divide and multiply with $\ave{\sigma_{n+1}}_{J^1\times K^2}$. To be more precise, that is, we write 
\begin{align*}
&\int_{\R^{d_2}}\sum_{j_{n+1}^1=0}^{k_{n+1}^1-1}\sum_{(J^1)^{(j_{n+1}^1)}=K^1}  |J^1|^{\frac{1}{2}} |\ave{b,h_{J^1} }_1 |\ave{|\Delta_{J^1,k_{n+1}^1-j_{n+1}^1}^1 f_{n+1}|}_{J^1\times K^2}1_{K^2}\\
&\hspace{1.5cm}=\int_{\R^{d_2}}\sum_{\substack{(J^1)^{(j_{n+1}^1)}=K^1\\ 0\le j_{n+1}^1\le k_{n+1}^1-1}}|J^1|^{\frac 12} |\ave{b,h_{J^1} }_1 |\ave{\sigma_{n+1}}_{J^1\times K^2}\frac{\ave{|\Delta_{J^1,k_{n+1}^1-j_{n+1}^1}^1 f_{n+1}|}_{J^1\times K^2}}{\ave{\sigma_{n+1}}_{J^1\times K^2}}1_{K^2}\\
%&\hspace{1.5cm}\le \int_{\R^{d}}\Big(\sum_{\substack{J^1 \subset K^1\\ \ell(J^1) > 2^{-k_{n+1}^1}\ell(K^1)}} \ave{\sigma_{n+1}}_{J^1\times K^2}^2\frac{\ave{|\Delta_{K^1,k_{n+1}^1}^1 f_{n+1}|}_{J^1\times K^2}^2}{\ave{\sigma_{n+1}}_{J^1\times K^2}^2}\frac{1_{J^1}}{|J^1|}\Big)^{\frac 12}\nu1_{K^2}\\
&\hspace{1.5cm}\lesssim \|b\|_{\bmo(\nu)}\int_{\R^{d}}\Big(\sum_{\substack{(J^1)^{(j_{n+1}^1)}=K^1\\ 0\le j_{n+1}^1\le k_{n+1}^1-1}}\frac{\ave{|\Delta_{J^1,k_{n+1}^1-j_{n+1}^1}^1 f_{n+1}|}_{J^1\times K^2}^2}{\ave{\sigma_{n+1}}_{J^1\times K^2}^2} 1_{J^1} \Big)^{\frac 12}\nu\sigma_{n+1}1_{K^2},
\end{align*}
where we have used Lemma \ref{lem:MWestimate}. 
The rest of the argument is rather standard and thus the object is bounded by
\begin{align*}
(1+ k_{n+1}^1)^{\frac12}\|b\|_{\bmo(\nu)}\prod_{i=1}^n\|f_i w_i\|_{L^{p_i}} \|f_{n+1} \nu w^{-1}\|_{L^{p'}},
\end{align*}
where dependence $(1+ k_{n+1}^1)^{\frac12}$ emerges from the summation of $
0\le j_{n+1}^1\le k_{n+1}^1-1.
$
This completes the analysis of the commutator of this form.

Although other forms of shifts lead to different expansions, the methods shown above are sufficient to handle those as well.  Since we are dealing with multilinear shifts, we now encounter terms in the shift case that are non-cancellative. In comparison, this does not happen in the linear case in \cite{LMV:Bloom}, where we always expand in the bi-parameter fashion.
For example, if we look at the term $bS(f_1,\ldots, f_{n})- S(\Pi_{3,3}^{1,2}(b,f_1),\ldots, f_{n}),$ we have
$$
(b - \ave{b}_{I_1^1 \times I_1^2}) 1_{I_{n+1}^1 \times I_{n+1}^2}.
$$
We write
\begin{align*}
(b - \ave{b}_{I_1^1 \times I_1^2}) 1_{I_{n+1}^1 \times I_{n+1}^2} &=( (b - \ave{b}_{I_{n+1}^1 \times I_{n+1}^2})  + (\ave{b}_{I_{n+1}^1 \times I_{n+1}^2} - \ave{b}_{I_1^1 \times I_1^2}) ) 1_{I_{n+1}^1 \times I_{n+1}^2} \\
&= (b -\ave{b}_{I_{n+1}^1,1} - \ave{b}_{I_{n+1}^2,2} + \ave{b}_{I_{n+1}^1 \times I_{n+1}^2}) 1_{I_{n+1}^1 \times I_{n+1}^2} \\
&\qquad+(\ave{b}_{I_{n+1}^1,1} - \ave{b}_{I_{n+1}^1 \times I_{n+1}^2}) 1_{I_{n+1}^1 \times I_{n+1}^2}  \\
&\qquad+ (\ave{b}_{I_{n+1}^2,2} - \ave{b}_{I_{n+1}^1 \times I_{n+1}^2}) 1_{I_{n+1}^1 \times I_{n+1}^2} \\
&\qquad+ (\ave{b}_{I_{n+1}^1 \times I_{n+1}^2} - \ave{b}_{I_1^1 \times I_1^2}) 1_{I_{n+1}^1 \times I_{n+1}^2} .
\end{align*}
The above terms are expanded to the martingale blocks and differences in a standard way like terms \eqref{eq:shiftM1} and \eqref{eq:shiftM2}. Note that the first term on the right-hand side produces a bi-parameter martingale difference inside of the rectangle $I_{n+1}^1 \times I_{n+2}^2.$ We will analyse similar terms in the following subsection.

\subsection*{Partial paraproducts}
As explained earlier, we will now focus on more involved expansions of the commutator. We show the most representative case out of those. Although we demonstrated the main ideas of the estimates already in the shift case, we need to use more complex estimates due to the more complicated structure of the partial paraproducts.

 We do not repeat the expansion strategy and instead straight away consider separately 
\begin{align*}
&\ave{(S\pi)(\Pi^{1,2}_{j_1,j_2}(b,f_1),\ldots, f_{n}),f_{n+1}} \\
&=\sum_{K^1, K^2} \sum_{(I_i^1)^{(k_i)} = K^1} a_{K(I_i^1)}\ave{\Pi^{1,2}_{j_1,j_2}(b,f_1), h_{I_1^1} \otimes h_{K^2}} \prod_{i=2}^{n+1} \Ave{f_i,\wt h_{I_i^1} \otimes \frac{1_{K^2}}{|K^2|}}
\end{align*}
for $(j_1,j_2)\neq (3,3).$ 
We collect most of the mixed index $(j_1 \neq j_2)$ cases, as the methods can be attained from these.

Let us begin with the term, where $j_1 = 1, j_2 = 2,$ that equals 
\begin{align*}
\sum_{K^1, K^2} &\sum_{(I_i^1)^{(k_i)} = K^1} a_{K(I_i^1)}\ave{\Pi^{1,2}_{1,2}(b,f_1), h_{I_1^1} \otimes h_{K^2}} \prod_{i=2}^{n+1} \Ave{f_i,\wt h_{I_i^1} \otimes \frac{1_{K^2}}{|K^2|}} \\
&= \sum_{J^1, K^2} \ave{b,h_{J^1} \otimes h_{K^2}} \Ave{f_1, h_{J^1} \otimes \frac{1_{K^2}}{|K^2|}}\\
&\hspace{3cm}\times\Ave{ \sum_{K^1}\sum_{(I_i^1)^{(k_i)} = K^1} a_{K(I_i^1)} \prod_{i=2}^{n+1} \Ave{f_i,\wt h_{I_i^1} \otimes \frac{1_{K^2}}{|K^2|}} h_{I_1^1}}_{J^1}^{\ave{\eta_1}_{K^2,2}} \\
&- \sum_{J^1, K^2} \ave{b,h_{J^1} \otimes h_{K^2}} \Ave{f_1, h_{J^1} \otimes \frac{1_{K^2}}{|K^2|}}\\
&\hspace{3cm}\times \Ave{ \sum_{K^1}\sum_{\substack{(I_i^1)^{(k_i)} = K^1\\ I_1^1 \subset J^1}} a_{K(I_i^1)} \prod_{i=2}^{n+1} \Ave{f_i,\wt h_{I_i^1} \otimes \frac{1_{K^2}}{|K^2|}} h_{I_1^1}}_{J^1}^{\ave{\eta_1}_{K^2,2}}\\
&+ \sum_{K^1, K^2} \sum_{(I_i^1)^{(k_i)} = K^1} a_{K(I_i^1)}\langle b, h_{I_1^1}\otimes h_{K^2}\rangle |I_1^1|^{-\frac 12}\prod_{i=1}^{n+1} \Ave{f_i,\wt h_{I_i^1} \otimes \frac{1_{K^2}}{|K^2|}}. 
\end{align*}
Similarly to the previously seen techniques, for the second term we use the square function lower bound to get rid of the restriction $I_1^1\subset J^1.$ Thus, via Proposition \ref{prop:bloomParaMixedEta} we can bound the first two terms by
$$
\|b\|_{\bmo(\nu)}\|f_1w_1\|_{L^{p_1}} \| S_\calD(S\pi)_k(f_2,\ldots, f_{n+1})\lambda_1^{-1}\|_{L^{p_1'}}.
$$ 
Clearly, Lemma \ref{lem:SFpartial} is enough to conclude the claim. The estimate for the remaining term is easier. We apply Lemma \ref{lem:MWestimate} and note that we have more cancellation than we need. Hence, we control 
\[
|I_1^1|^{-\frac 12}\Big|\Ave{ f_1, h_{I_1^1}\otimes \frac {1_{K^2}}{|K^2|}}\Big|\langle \sigma_1\rangle_{I_1^1\times K^2}^{-1} 1_{I_1^1\times K^2}\le M_{\calD}^{\sigma_1}(f_1\sigma_1^{-1}) 1_{I_1^1\times K^2}.
\]
Thus, we are left to estimate
\[
\|b\|_{\bmo(\nu)}\| M_{\calD}^{\sigma_1}(f_1\sigma_1^{-1}) S_\calD(S\pi)_k(f_2,\ldots, f_{n+1})\sigma_1 \nu\|_{L^{1}}.
\]
The desired estimate follows by H\"older's inequality and Lemma \ref{lem:SFpartial}. We remark that the remaining term essentially contains the idea to handle $\Pi_{1,1}$.

The term with $\Pi_{2,1}$ is analogous to the previous one. We remark that in this case, the weighted paraproduct operator has the weight $\ave{\eta_1}_{I_1^1}$ as the localization of the operator is at that level on the first parameter. The cases $\Pi_{3,1}$ and $\Pi_{1,3}$ can be handled similarly. For the sake of the completeness, we give a sketch of the case $\Pi_{3,1}.$ As before, we write 
\begin{align*}
\ave{(S\pi)&(\Pi_{3,1}(b,f_1),\ldots, f_{n}),f_{n+1}} \\
 &=\sum_{K^1, J^2}\sum_{(I_1^1)^{(k_1)} = K^1} \Ave{b,\frac{1_{I_1^1}}{|I_1^1|} \otimes h_{J^2}} \ave{f_1, h_{I_1^1}  \otimes h_{J^2}}\\
&\hspace{4cm}\times\Ave{ \sum_{K^2 \supsetneq J^2}\sum_{\substack{(I_i^1)^{(k_i)} = K^1\\i\neq 1}} a_{K(I_i^1)} \prod_{i=2}^{n+1} \Ave{f_i,\wt h_{I_i^1} \otimes \frac{1_{K^2}}{|K^2|}} h_{K^2}}_{J^2} \\
&+  \sum_{K^1, K^2} \sum_{(I_i^1)^{(k_i)} = K^1} a_{K(I_i^1)}\langle b, \frac{1_{I_1^1}}{|I_1^1|}\otimes h_{K^2}\rangle |K^2|^{-\frac 12}\langle f_1, h_{I_1^1}\otimes h_{K^2}\rangle\prod_{i=2}^{n+1} \Ave{f_i,\wt h_{I_i^1} \otimes \frac{1_{K^2}}{|K^2|}}.
\end{align*}
For the second term,  we again use Lemma \ref{lem:MWestimate} and treat 
\[
 |K^2|^{-\frac 12}|\langle f_1, h_{I_1^1}\otimes h_{K^2}\rangle | \langle \sigma_1\rangle_{I_1^1\times K^2}^{-1}1_{I_1^1\times K^2}\le M_{\calD^2}^{\langle \sigma_1\rangle_{I_1^1,1}}(\langle f_1, h_{I_1^1}\rangle_1\langle \sigma_1\rangle_{I_1^1,1}^{-1} ) 1_{I_1^1\times K^2}.
\]Then after applying H\"older's inequality twice we reduce the problem to bounding
\begin{align*}
&\|b\|_{\bmo(\nu)}\Big\|\Big(\sum_{K^1}\sum_{(I_1^1)^{(k_1)}=K^1}\big[M_{\calD^2}^{\langle \sigma_1\rangle_{I_1^1,1}}(\langle f_1, h_{I_1^1}\rangle_1\langle \sigma_1\rangle_{I_1^1,1}^{-1} )\big]^21_{I_1^1}\Big)^{\frac 12}\Big\|_{L^{p_1}(\sigma_1)}\\
&\hspace{4cm}\times\|S_\calD (S\pi)_k(f_2,\ldots, f_{n+1})\lambda_1^{-1}\|_{L^{p_1'}}.
\end{align*} The estimate is done by Proposition \ref{prop:5.8} and Lemma \ref{lem:SFpartial}. For the first term, we split as usual to
\begin{align*}
& \sum_{K^1, J^2}\sum_{(I_1^1)^{(k_1)} = K^1}  \Ave{b,\frac{1_{I_1^1}}{|I_1^1|}  \otimes h_{J^2}} \ave{f_1, h_{I_1^1}  \otimes h_{J^2}}\\
&\hspace{4cm}\times\Ave{\sum_{K^2 }\sum_{\substack{(I_i^1)^{(k_i)} = (I_1^1)^{(k_1)}\\i\neq1}} a_{K(I_i^1)} \prod_{i=2}^{n+1} \Ave{f_i,\wt h_{I_i^1} \otimes \frac{1_{K^2}}{|K^2|}} h_{K^2} }_{J^2}^{\ave{\eta_1}_{I_1^1,1}} \\
&- \sum_{K^1, J^2}\sum_{(I_1^1)^{(k_1)} = K^1} \Ave{b,\frac{1_{I_1^1}}{|I_1^1|}  \otimes h_{J^2}} \ave{f_1, h_{I_1^1}  \otimes h_{J^2}}\\
&\hspace{4cm}\times\Ave{\sum_{K^2 \subset J^2}\sum_{\substack{(I_i^1)^{(k_i)} = (I_1^1)^{(k_1)}\\i\neq1}} a_{K(I_i^1)} \prod_{i=2}^{n+1} \Ave{f_i,\wt h_{I_i^1} \otimes \frac{1_{K^2}}{|K^2|}} h_{K^2} }_{J^2}^{\ave{\eta_1}_{I_1^1,1}}.
\end{align*}
We focus on the first term as the other one is very similar once square function lower bound is applied inside of the average over $J^2.$ Rewrite the first term as 
\begin{align*}
\sum_{J^1, J^2}\Ave{b,\frac{1_{J^1}}{|J^1|}  \otimes h_{J^2}} \ave{f_1, h_{J^1}  \otimes h_{J^2}}\Ave{\langle (S\pi)_k (f_2, \cdots, f_{n+1}), h_{J^1}\rangle_1}_{J^2}^{\langle \eta_1\rangle_{J^1,1}}.
\end{align*}
Then the estimate is done by Proposition \ref{prop:bloomParaMixedEta} and Lemma \ref{lem:SFpartial}.

We continue with the term, where $j_1 = 2, j_2 = 3$, that is,
\begin{align*}
&\sum_{K^1, K^2} \sum_{(I_i^1)^{(k_i)} = K^1} a_{K(I_i^1)}\ave{\Pi^{1,2}_{2,3}(b,f_1), h_{I_1^1} \otimes h_{K^2}} \prod_{i=2}^{n+1} \Ave{f_i,\wt h_{I_i^1} \otimes \frac{1_{K^2}}{|K^2|}} \\
&= \sum_{K^1,K^2}\sum_{(I_1^1)^{(k_1)} = K^1} \Ave{b,h_{I_1^1} \otimes \frac{1_{K^2}}{|K^2|}} \ave{\ave{f_1, h_{K^2}}_2 }_{I_1^1}\sum_{\substack{(I_i^1)^{(k_i)} = K^1\\ i\neq1}} a_{K(I_i^1)} \prod_{i=2}^{n+1} \Ave{f_i,\wt h_{I_i^1} \otimes \frac{1_{K^2}}{|K^2|}}.
\end{align*}Note that we can rewrite the above as 
\begin{align*}
\sum_{J^1, J^2}\Ave{b,h_{J^1} \otimes \frac{1_{J^2}}{|J^2|}}\langle \sigma_1\rangle_{J^1\times J^2}\frac{\Ave{f_1, \frac{1_{J^1}}{|J^1|} \otimes h_{J^2}}}{\langle \sigma_1\rangle_{J^1\times J^2} }\langle (S\pi)_k(f_2, \cdots, f_n), h_{J^1}\otimes h_{J^2}\rangle.
\end{align*}
Then  there is nothing new here; by Lemma \ref{lem:MWestimate} we have that the above is dominated by
\begin{align*}
\|b\|_{\bmo(\nu)} &\Big\|\sum_{J^2} \Big(\sum_{J^1} \big[M_{\calD^1}^{\langle \sigma_1\rangle_{J^2,2}}(\langle f_1, h_{J^2}\rangle_2\langle \sigma_1\rangle_{J^2,2}^{-1} )\big]^2\\
&\hspace{3cm}\times\langle (S\pi)_k(f_2, \cdots, f_n), h_{J^1}\otimes h_{J^2}\rangle^2 \frac{1_{J^1}}{|J^1|} \Big)^{\frac 12} \frac{ 1_{J^2}}{|J^2|}\Big\|_{L^1(\sigma_1\nu)}.
\end{align*}The estimate is then completed by applying H\"older's inequality twice, Proposition \ref{prop:5.8} and Lemma \ref{lem:SFpartial}.

Symmetrically, we can work with $\Pi_{3,2}.$
Lastly, we focus on terms with $\Pi_{3,3}$ type illegal paraproducts. We choose here the type of term which we did not consider in the shift section:
$$\ave{(S\pi)_k(\Pi_{3,3}(b,f_1),\ldots,f_{n}) - b(S\pi)_k(f_1\ldots, f_n),f_{n+1}}.$$ Notice that we have
\begin{equation}\label{eq:errorTermPartial}
\begin{split}
&\ave{\Pi^{1,2}_{3,3}(b,f_1), h_{I_1^1} \otimes h_{K^2}} \Ave{f_{n+1} , h_{I_{n+1}^1}^0 \otimes \frac{1_{K^2}}{|K^2|}} - \ave{f_1, h_{I_1^1} \otimes h_{K^2}}\Ave{bf_{n+1} , h_{I_{n+1}^1}^0 \otimes \frac{1_{K^2}}{|K^2|}}\\
&=\ave{b}_{I_1^1\times K^2}\ave{f_1, h_{I_1^1} \otimes h_{K^2}}\Ave{f_{n+1} , h_{I_{n+1}^1}^0 \otimes \frac{1_{K^2}}{|K^2|}} - \ave{f_1, h_{I_1^1} \otimes h_{K^2}}\Ave{bf_{n+1} , h_{I_{n+1}^1}^0 \otimes \frac{1_{K^2}}{|K^2|}} \\
&=\Big(\ave{b}_{I_1^1\times K^2} - \ave{b}_{I_{n+1}^1\times K^2}\Big) \ave{f_1, h_{I_1^1} \otimes h_{K^2}}\Ave{f_{n+1} , h_{I_{n+1}^1}^0 \otimes \frac{1_{K^2}}{|K^2|}}\\
&\hspace{5cm} -\ave{f_1, h_{I_1^1} \otimes h_{K^2}}\Ave{(b - \ave{b}_{I_{n+1}^1\times K^2})f_{n+1} , h_{I_{n+1}^1}^0 \otimes \frac{1_{K^2}}{|K^2|}}.
\end{split}
\end{equation}
Now on the right-hand side of the above equation \eqref{eq:errorTermPartial}, we have two distinct cases where the first part is similar to the ones seen in the analysis of the shift commutator. We begin with this familiar case. However, now without using the sharper \eqref{eq:shiftM2} expansion since, in this case, it does not matter if we have a square root dependence or a linear one. Observe that
$$
|\ave{b}_{I_1^1\times K^2} - \ave{b}_{I_{n+1}^1\times K^2} | \lesssim \|b\|_{\bmo(\nu)} (\nu_{I_1^1,K^2}+\nu_{I_{n+1}^1,K^2}),
$$
where
$$
\nu_{Q_1,K^2} := \sum_{\substack{J^1 \in \calD^1\\ Q^1\subsetneq J^1 \subset K^1}} \ave{\nu}_{J^1 \times K^2},\quad Q^1\in \{I_1^1, I_{n+1}^1\}.
$$
Then our term is bounded by  
\begin{align*}
& \|b\|_{\bmo(\nu)}\sum_{K} \sum_{(I_i^1)^{(k_i)} = K^1} |a_{K,(I_i^1)}| |\ave{f_1, h_{I_1^1} \otimes h_{K^2}} |\prod_{i=2}^{n+1}\Big|\Ave{f_i, \wt h_{I_i^1} \otimes \frac{1_{K^2}}{|K^2|}} \Big|\\
&\hspace{9cm}\times(\nu_{I_1^1,K^2}+\nu_{I_{n+1}^1,K^2}).
\end{align*}
We first consider $\nu_{I_{n+1}^1,K^2}$. We fix $j_{n+1} \in \{1,\ldots, k_{n+1}\}$ and
it suffices to bound 
\begin{align*}
& \sum_{K^1}\sum_{(I_i^1)^{(k_i)} = K^1} \sum_{K^2} |a_{K,(I_{i}^1)}|\langle \nu\rangle_{(I_{n+1}^1)^{(j_{n+1})}\times K^2} |\ave{f_1, h_{I_1^1} \otimes h_{K^2}}| \prod_{i=2}^{n+1}\Big|\Ave{f_i, \wt h_{I_i^1} \otimes \frac{1_{K^2}}{|K^2|}}\Big|\\
%&\le  \sum_{K^1}\sum_{(I_i^1)^{(k_i)} = K^1} \frac{\prod_{i=1}^{n+1}|I_i^1|^{\frac 12}}{|K^1|^n}\\
%&\hspace{1cm}\times 
%\int_{\R^{d_2}}\Big(\sum_{K^2}\langle \nu\rangle_{(I_{n+1}^1)^{(j_{n+1})}\times K^2}^2 |\ave{f_1, h_{I_1^1} \otimes h_{K^2}}|^2 \prod_{i=2}^{n+1}\Big|\Ave{f_i, \wt h_{I_i^1} \otimes \frac{1_{K^2}}{|K^2|}}\Big|^2\frac{1_{K^2}}{|K^2|}\Big)^{\frac 12}\\
&\lesssim \sum_{K^1}\sum_{(I_i^1)^{(k_i)} = K^1} \frac{\prod_{i=1}^{n+1}|I_i^1|^{\frac 12}}{|K^1|^n}\int_{\R^{d_1}}\frac{1_{(I_{n+1}^1)^{(j_{n+1})}}}{|(I_{n+1}^1)^{(j_{n+1})}|}\\
&\hspace{1cm}\times 
\int_{\R^{d_2}}\Big(\sum_{K^2} |\ave{f_1, h_{I_1^1} \otimes h_{K^2}}|^2 \prod_{i=2}^{n+1}\Big|\Ave{f_i, \wt h_{I_i^1} \otimes \frac{1_{K^2}}{|K^2|}}\Big|^2\frac{1_{K^2}}{|K^2|}\Big)^{\frac 12}\nu,
\end{align*}where we have applied Lemma \ref{lem:MW}.
Recall the strategy in \cite{LMV:gen}, when $\wt h_{I_i^1}=h_{I_i^1}$ we do not do anything and when $\wt h_{I_i^1}=h_{I_i^1}^0$ and $I_i^1\neq K^1$ we expand 
\[
|I^1_j|^{-\frac{1}{2}} \langle f_j, h^0_{I^1_j} \rangle_1
=\langle f\rangle_{I_j^1,1}
=\langle f_j \rangle_{K^1,1}+\sum_{i_j=1}^{k_j} \langle \Delta^1_{(I^1_j)^{(i_j)}} f_j\rangle_{(I_j^1)^{(i_j-1)},1}.
\]
We have 
\begin{align*}
&\sum_{K^1}\sum_{(I_i^1)^{(k_i)} = K^1} \frac{\prod_{i=1}^{n+1}|I_i^1|^{\frac 12}}{|K^1|^n}\int_{\R^{d_1}}\frac{1_{(I_{n+1}^1)^{(j_{n+1})}}}{|(I_{n+1}^1)^{(j_{n+1})}|} |I_{n+1}^1|^{\frac 12}\\
&\hspace{1cm}\times 
\int_{\R^{d_2}}\Big(\sum_{K^2} |\ave{f_1, h_{I_1^1} \otimes h_{K^2}}|^2 \prod_{i=2}^{n}\Big|\Ave{f_i, \wt h_{I_i^1} \otimes \frac{1_{K^2}}{|K^2|}}\Big|^2 \big|\langle f_{n+1}\rangle_{K^1\times K^2}\big|^2\frac{1_{K^2}}{|K^2|}\Big)^{\frac 12}\nu\\
&\le \sum_{K^1}\sum_{\substack{(I_i^1)^{(k_i)} = K^1\\ i\neq n+1}} \frac{\prod_{i=1}^{n}|I_i^1|^{\frac 12}}{|K^1|^n}\int_{\R^{d}}1_{K^1}\\
&\hspace{1cm}\times 
\Big(\sum_{K^2} |\ave{f_1, h_{I_1^1} \otimes h_{K^2}}|^2 \prod_{i=2}^{n}\Big|\Ave{f_i, \wt h_{I_i^1} \otimes \frac{1_{K^2}}{|K^2|}}\Big|^2 \langle |f_{n+1}|\rangle_{K^1\times K^2}^2\frac{1_{K^2}}{|K^2|}\Big)^{\frac 12}\nu.
\end{align*}
Since $(w_1, \cdots, w_n, \nu w^{-1})\in A_{(p_1, \cdots, p_n, p')}$, the same proof as in \cite{LMV:gen}*{Section 6.B} yields the desired estimate. The proof of $\langle \Delta^1_{(I^1_{n+1})^{(i_{n+1})}} f_{n+1}\rangle_{(I_{n+1}^1)^{(i_{n+1}-1)},1}$ with $i_{n+1}\ge j_{n+1}$ is similar. So we only focus on $i_{n+1}< j_{n+1}$. By simple calculus, we reduce to bounding 
\begin{align*}
&\sum_{K^1}\sum_{\substack{(I_i^1)^{(k_i)} = K^1\\ i\neq n+1}} \frac{\prod_{i=1}^{n}|I_i^1|^{\frac 12}}{|K^1|^n}\int_{\R^{d_1}}\frac{1_{(L_{n+1}^1)^{(j_{n+1}-i_{n+1})}}}{|(L_{n+1}^1)^{(j_{n+1}-i_{n+1})}|} |L_{n+1}^1|^{\frac 12}\\
&\hspace{0.5cm}\times 
\int_{\R^{d_2}}\Big(\sum_{K^2} |\ave{f_1, h_{I_1^1} \otimes h_{K^2}}|^2 \prod_{i=2}^{n}\Big|\Ave{f_i, \wt h_{I_i^1} \otimes \frac{1_{K^2}}{|K^2|}}\Big|^2 \Big|\Ave{f_{n+1}, h_{L_{n+1}^1}\otimes \frac{1_{K^2}}{|K^2|}}\Big|^2\frac{1_{K^2}}{|K^2|}\Big)^{\frac 12}\nu. 
\end{align*}
Denote $(L_{n+1}^1)^{(j_{n+1}-i_{n+1})}=Q_{n+1}^1$, and write 
\[
\Ave{f_{n+1}, h_{L_{n+1}^1}\otimes \frac{1_{K^2}}{|K^2|}}= \frac{\Ave{f_{n+1}, h_{L_{n+1}^1}\otimes \frac{1_{K^2}}{|K^2|}}}{\langle \sigma_{n+1}\rangle_{Q_{n+1}^1\times K^2}}\langle \sigma_{n+1}\rangle_{Q_{n+1}^1\times K^2}.
\]
By the reverse H\"older and $A_{\infty}$ extrapolation, we can get $\sigma_{n+1}$ out of the square sum. Then using 
\[
\frac{\Big|\Ave{f_{n+1}, h_{L_{n+1}^1}\otimes \frac{1_{K^2}}{|K^2|}}\Big|}{\langle \sigma_{n+1}\rangle_{Q_{n+1}^1\times K^2}}\le M_{\calD^2}^{\langle \sigma_{n+1}\rangle_{Q_{n+1}^1,1}}(\ave{f_{n+1}, h_{L_{n+1}^1}}_1\langle \sigma_{n+1}\rangle_{Q_{n+1}^1,1}^{-1})1_{K^2}
\]
we arrive at 
\begin{align*}
&\sum_{K^1}\sum_{\substack{(I_i^1)^{(k_i)} = K^1\\i\neq n+1}} \frac{\prod_{i=1}^{n}|I_i^1|^{\frac 12}}{|K^1|^n}\int_{\R^{d}}1_{K^1}F_{n+1,K^1}\\
&\hspace{3cm}\times 
\Big(\sum_{K^2} |\ave{f_1, h_{I_1^1} \otimes h_{K^2}}|^2 \prod_{i=2}^{n}\Big|\Ave{f_i, \wt h_{I_i^1} \otimes \frac{1_{K^2}}{|K^2|}}\Big|^2 \frac{1_{K^2}}{|K^2|}\Big)^{\frac 12}\nu\sigma_{n+1},
\end{align*}where
\begin{align*}
F_{n+1,K^1}&:=\sum_{(Q_{n+1}^1)^{(k_{n+1}-j_{n+1})}=K^1}\frac{1_{Q_{n+1}^1}}{|Q_{n+1}^1|}\sum_{(L_{n+1}^1)^{(j_{n+1}-i_{n+1})}=Q_{n+1}^1} |L_{n+1}^1|^{\frac 12}\\
&\hspace{3cm}\times 
M_{\calD^2}^{\langle \sigma_{n+1}\rangle_{Q_{n+1}^1,1}}(\ave{f_{n+1}, h_{L_{n+1}^1}}_1\langle \sigma_{n+1}\rangle_{Q_{n+1}^1,1}^{-1}).
\end{align*}By H\"older's inequality, it suffices to bound the $L^p(w^p)$ norm of 
\begin{align*}
\Big(\sum_{K^1}1_{K^1}\Big[\sum_{\substack{(I_i^1)^{(k_i)} = K^1\\i\neq n+1}} \frac{\prod_{i=1}^{n}|I_i^1|^{\frac 12}}{|K^1|^n}\Big(\sum_{K^2} |\ave{f_1, h_{I_1^1} \otimes h_{K^2}}|^2 \prod_{i=2}^{n}\Big|\Ave{f_i, \wt h_{I_i^1} \otimes \frac{1_{K^2}}{|K^2|}}\Big|^2 \frac{1_{K^2}}{|K^2|}\Big)^{\frac 12}\Big]^2\Big)^{\frac 12}
\end{align*}and 
\[
\Big\|\Big(\sum_{K^1} F_{n+1,K^1}^2\Big)^{\frac 12}\Big\|_{L^{p'}(\sigma_{n+1})}.
\]
Simply control the outer $\ell^2$ norm by $\ell^1$ norm -- then we can again use the estimate in \cite{LMV:gen}*{Section 6.B} to conclude the first term. For the second term, note that 
\begin{align*}
&\Big\|\Big(\sum_{K^1} F_{n+1,K^1}^2\Big)^{\frac 12}\Big\|_{L^{p'}(\sigma_{n+1})}\\
&=\Big\|\Big(\sum_{K^1}\Big[\sum_{(Q_{n+1}^1)^{(k_{n+1}-j_{n+1})}=K^1}\frac{1_{Q_{n+1}^1}}{|Q_{n+1}^1|} \sum_{(L_{n+1}^1)^{(j_{n+1}-i_{n+1})}=Q_{n+1}^1} |L_{n+1}^1|^{\frac 12}\\
&\hspace{4cm}\times 
M_{\calD^2}^{\langle \sigma_{n+1}\rangle_{Q_{n+1}^1,1}}(\ave{f_{n+1}, h_{L_{n+1}^1}}_1\langle \sigma_{n+1}\rangle_{Q_{n+1}^1,1}^{-1})\Big]^2\Big)^{\frac 12}\Big\|_{L^{p'}(\sigma_{n+1})}\\
&=\Big\| \Big(\sum_{Q_{n+1}^1} 1_{Q_{n+1}^1}\Big[\sum_{(L_{n+1}^1)^{(j_{n+1}-i_{n+1})}=Q_{n+1}^1} \frac{|L_{n+1}^1|^{\frac 12}}{|Q_{n+1}^1|}\\
&\hspace{4cm}\times M_{\calD^2}^{\langle \sigma_{n+1}\rangle_{Q_{n+1}^1,1}}(\ave{f_{n+1}, h_{L_{n+1}^1}}_1\langle \sigma_{n+1}\rangle_{Q_{n+1}^1,1}^{-1})\Big]^2\Big)^{\frac 12}\Big\|_{L^{p'}(\sigma_{n+1})},
\end{align*}which again can be handled exactly as in \cite{LMV:gen}*{p.23}. Now we turn to consider the case $Q^1= I_1^1$. Similarly, 
\begin{align*}
& \sum_{K^1}\sum_{(I_i^1)^{(k_i)} = K^1} \sum_{K^2} |a_{K,(I_{i}^1)}|\langle \nu\rangle_{(I_{1}^1)^{(j_{1})}\times K^2} |\ave{f_1, h_{I_1^1} \otimes h_{K^2}}| \prod_{i=2}^{n+1}\Big|\Ave{f_i, \wt h_{I_i^1} \otimes \frac{1_{K^2}}{|K^2|}}\Big|\\
&\lesssim \sum_{K^1}\sum_{(I_i^1)^{(k_i)} = K^1} \frac{\prod_{i=1}^{n+1}|I_i^1|^{\frac 12}}{|K^1|^n}\int_{\R^{d_1}}\frac{1_{(I_{1}^1)^{(j_{1})}}}{|(I_{1}^1)^{(j_{1})}|}\\
&\hspace{3.5cm}\times 
\int_{\R^{d_2}}\Big(\sum_{K^2} |\ave{f_1, h_{I_1^1} \otimes h_{K^2}}|^2 \prod_{i=2}^{n+1}\Big|\Ave{f_i, \wt h_{I_i^1} \otimes \frac{1_{K^2}}{|K^2|}}\Big|^2\frac{1_{K^2}}{|K^2|}\Big)^{\frac 12}\nu.
\end{align*}Similar as \cite{LMV:gen}, we may without loss of generality assume either $\wt h_{I_i^1}=h_{I_i^1} $ or otherwise $I_i^1=K^1$.  As before, by reverse H\"older and $A_\infty$ extrapolation the object is dominated by 
\begin{align*}
&\sum_{K^1}\sum_{(I_i^1)^{(k_i)} = K^1} \frac{\prod_{i=1}^{n+1}|I_i^1|^{\frac 12}}{|K^1|^n}\int_{\R^{d_1}}\frac{1_{(I_{1}^1)^{(j_{1})}}}{|(I_{1}^1)^{(j_{1})}|}\\
&\hspace{2.5cm}\times 
\int_{\R^{d_2}}\Big(\sum_{K^2} \frac{|\ave{f_1, h_{I_1^1} \otimes h_{K^2}}|^2}{\langle \sigma_1\rangle^2_{(I_1^1)^{(j_1)}\times K^2}} \prod_{i=2}^{n+1}\Big|\Ave{f_i, \wt h_{I_i^1} \otimes \frac{1_{K^2}}{|K^2|}}\Big|^2\frac{1_{K^2}}{|K^2|}\Big)^{\frac 12}\nu\sigma_1.
\end{align*}
Next, we write 
\begin{align*}
\prod_{i=2}^{n+1}\Big|\Ave{f_i, \wt h_{I_i^1} \otimes \frac{1_{K^2}}{|K^2|}}\Big|\le M_{\calD^2}(\langle f_2, \wt h_{I_2^1}\rangle_1, \cdots, \langle f_{n+1}, \wt h_{I_{n+1}^1}\rangle_1).
\end{align*}
Then by H\"older's inequality the estimate is reduced to 
\begin{align*}
A:=\Big\|\Big(\sum_{K^1}\Big(\sum_{(I_1^1)^{(k_1)}=K^1}|I_1^1|^{\frac 12}\frac{1_{(I_{1}^1)^{(j_{1})}}}{|(I_{1}^1)^{(j_{1})}|}\Big(\sum_{K^2} \frac{|\ave{f_1, h_{I_1^1} \otimes h_{K^2}}|^2}{\langle \sigma_1\rangle^2_{(I_1^1)^{(j_1)}\times K^2}} \frac{1_{K^2}}{|K^2|}\Big)^{\frac 12}\Big)^2\Big)^{\frac 12}\Big\|_{L^{p_1}(\sigma_1)}
\end{align*}
and 
\begin{align*}
&B:=\Big\|\Big(\sum_{K^1}1_{K^1}\Big[\sum_{\substack{(I_i^1)^{(k_i)} = K^1\\ i\neq 1}}\frac{\prod_{i=2}^{n+1}|I_i^1|^{\frac 12}}{|K^1|^n} M_{\calD^2}(\langle f_2, \wt h_{I_2^1}\rangle_1, \cdots, \langle f_{n+1}, \wt h_{I_{n+1}^1}\rangle_1)\Big]^2\Big)^{\frac 12}\Big\|_{L^{p_1'}(\eta_1)}.
\end{align*}
Again, the estimate of $A$ can be found in \cite{LMV:gen}*{Section 6.B} and we omit the details. For $B$, we shall prove 
\[
B\lesssim \|f_{n+1}\nu w^{-1} \|_{L^{p'}}\prod_{i=2}^{n} \|f_iw_i\|_{L^{p_i}}.
\]By the extrapolation theorem, it suffices to prove 
\begin{align*}
&\Big\|\Big(\sum_{K^1}1_{K^1}\Big[\sum_{\substack{(I_i^1)^{(k_i)} = K^1\\ i\neq 1}}\frac{\prod_{i=2}^{n+1}|I_i^1|^{\frac 12}}{|K^1|^n} M_{\calD^2}(\langle f_2, \wt h_{I_2^1}\rangle_1, \cdots, \langle f_{n+1}, \wt h_{I_{n+1}^1}\rangle_1)\Big]^2\Big)^{\frac 12}v\Big\|_{L^{\frac 2n}}\\
&\hspace{11.5cm} \le\prod_{i=2}^{n+1} \|f_iv_i\|_{L^{2}},
\end{align*}
provided $(v_2, \cdots, v_{n+1})\in A_{(2,\cdots, 2)}$ and $v=\prod_{i=2}^{n+1}v_i$. Note that for a fixed $K^2$, if we denote $\zeta_i=v_i^{-2}$, $2\le i\le n+1$, then
\begin{align*}
\prod_{i=2}^{n+1} \Ave{|\langle f_i, \wt h_{I_i^1} \rangle_1| }_{K^2} &=\prod_{i=2}^{n+1} \frac{\Ave{|\langle f_i, \wt h_{I_i^1} \rangle_1| }_{K^2}}{\langle \zeta_i\rangle_{K^1\times K^2}}\langle \zeta_i\rangle_{K^1\times K^2}\\
&\lesssim \frac 1{\langle v^{\frac 2n}\rangle_{K^1\times K^2}^n}\prod_{i=2}^{n+1} \frac{\Ave{|\langle f_i, \wt h_{I_i^1} \rangle_1| }_{K^2}}{\langle \zeta_i\rangle_{K^1\times K^2}}\\&\le \inf_{x\in K^1\times K^2}
\Big(M_{\calD}^{v^{\frac 2n}}\big(\big[\prod_{i=2}^{n+1}M_{\calD^2}^{\langle \zeta_i\rangle_{K^1,1}}( |\langle f_i, \wt h_{I_i^1} \rangle_1| \langle \zeta_i\rangle_{K^1,1}^{-1}) 1_{K^1}\big]^{\frac 1n}v^{-\frac 2n}\big)\Big)^n.
\end{align*}Whence
\begin{align*}
&1_{K^1}M_{\calD^2}(\langle f_2, \wt h_{I_2^1}\rangle_1, \cdots, \langle f_{n+1}, \wt h_{I_{n+1}^1}\rangle_1)\\
&\hspace{4cm}\le \Big(M_{\calD}^{v^{\frac 2n}}\big(\big[\prod_{i=2}^{n+1}M_{\calD^2}^{\langle \zeta_i\rangle_{K^1,1}} (|\langle f_i, \wt h_{I_i^1} \rangle_1| \langle \zeta_i\rangle_{K^1,1}^{-1}) 1_{K^1}\big]^{\frac 1n}v^{-\frac 2n}\big)\Big)^n
\end{align*}and by the vector-valued estimate for $M_{\calD}^{v^{\frac 2n}}$ and H\"older's inequality, we have 
\begin{align*}
&\Big\|\Big(\sum_{K^1}1_{K^1}\Big[\sum_{\substack{(I_i^1)^{(k_i)} = K^1\\ i\neq 1}}\frac{\prod_{i=2}^{n+1}|I_i^1|^{\frac 12}}{|K^1|^n} M_{\calD^2}(\langle f_2, \wt h_{I_2^1}\rangle_1, \cdots, \langle f_{n+1}, \wt h_{I_{n+1}^1}\rangle_1)\Big]^2\Big)^{\frac 12}v\Big\|_{L^{\frac 2n}}\\
&\hspace{2cm}\lesssim \prod_{i=2}^{n+1}\Big\|\Big(\sum_{K^1}1_{K^1}\Big[\sum_{(I_i^1)^{(k_i)} = K^1}\frac{|I_i|^{\frac 12}}{|K^1|}M_{\calD^2}^{\langle \zeta_i\rangle_{K^1,1}} (|\langle f_i, \wt h_{I_i^1} \rangle_1| \langle \zeta_i\rangle_{K^1,1}^{-1})\Big]^2\Big)^{\frac 12}\Big\|_{L^2(\zeta_i)}.
\end{align*}Recall that when $\wt h_{I_i^1}=h_{I_i^1}^0$, then according to our convention $I_i^1=K^1$ and 
\[
\sum_{(I_i^1)^{(k_i)} = K^1}\frac{|I_i|^{\frac 12}}{|K^1|}M_{\calD^2}^{\langle \zeta_i\rangle_{K^1,1}} (|\langle f_i, \wt h_{I_i^1} \rangle_1| \langle \zeta_i\rangle_{K^1,1}^{-1})= M_{\calD^2}^{\langle \zeta_i\rangle_{K^1,1}}(|\ave{f_i}_{K^1}|\langle \zeta_i\rangle_{K^1,1}^{-1})\le M_{\calD}^{\zeta_i}(f_i \zeta_i^{-1}).
\]
Again the rest can be estimated as in \cite{LMV:gen}*{Section 6.B}.

Next, we consider the latter part of \eqref{eq:errorTermPartial}. Notice that by Lemma \ref{lem:MWestimate} we have
\begin{align*}
&|I_{n+1}^1|^{\frac 12}|K^2|\Ave{(b - \ave{b}_{I_{n+1}^1\times K^2})f_{n+1} , h_{I_{n+1}^1}^0 \otimes \frac{1_{K^2}}{|K^2|}} \\&
=\sum_{I\times J\subset I_{n+1}^1\times K^2}\langle b, h_I\otimes h_J\rangle \langle f_{n+1}, h_I\otimes h_J\rangle+ \sum_{I\subset I_{n+1}^1} \Ave{ b, h_I \otimes \frac{1_{K^2}}{|K^2|}} \ave{f_{n+1}, h_I \otimes 1_{K^2}}\\
&\qquad+\sum_{J\subset K^2} \Ave{b, \frac{1_{I_{n+1}^1}}{|I_{n+1}^1|}\otimes h_J}\ave{f_{n+1},1_{I_{n+1}^1}\otimes h_J}\\
&\lesssim \|b\|_{\bmo(\nu)} \int_{I_{n+1}^1\times K^2}\Big(\sum_{R\in \calD} \frac{\ave{f_{n+1}, h_R}^2}{\langle \sigma_{n+1}\rangle_{R}^2}\frac{1_R}{|R|}\Big)^{\frac 12}\nu \sigma_{n+1}\\
&\hspace{1.5cm}+\|b\|_{\bmo(\nu)} \int_{I_{n+1}^1\times K^2}\Big(\sum_{I\in \calD^1} \frac{\ave{f_{n+1}, h_I\otimes \frac{1_{K^2}}{|K^2|}}^2}{\langle \sigma_{n+1}\rangle_{I\times K^2}^2}\frac{1_I}{|I|}\Big)^{\frac 12}\nu \sigma_{n+1}\\
&\hspace{3cm}+\|b\|_{\bmo(\nu)}\int_{I_{n+1}^1\times K^2}\Big(\sum_{J\in \calD^2} \frac{\ave{f_{n+1}, \frac{1_{I_{n+1}^1}}{|I_{n+1}^1|}\otimes h_J}^2}{\langle \sigma_{n+1}\rangle_{I_{n+1}^1\times J^2}^2}\frac{1_J}{|J|}\Big)^{\frac 12}\nu \sigma_{n+1}.
\end{align*}
Then e.g. dominating
\[
\Big(\sum_{I\in \calD^1} \frac{\ave{f_{n+1}, h_I\otimes \frac{1_{K^2}}{|K^2|}}^2}{\langle \sigma_{n+1}\rangle_{I\times K^2}^2}\frac{1_I}{|I|}\Big)^{\frac 12}\le \Big(\sum_{I\in \calD^1} \big[ M_{\calD^2}^{\langle \sigma_{n+1}\rangle_{I,1}} (\ave{f_{n+1}, h_I} \langle \sigma_{n+1}\rangle_{I,1}^{-1})\big]^2\frac{1_I}{|I|}\Big)^{\frac 12}
\]allows us to view these square functions (which are bounded on $L^{p'}(\sigma_{n+1})$) as the new $f_{n+1}$. So that by H\"older's inequality, 
the related term in the commutator boils down 
%(H\"older's inequality and the boundedness of $M_\calD^\mu,$ for $\mu \in A_\infty$) 
to estimating the partial paraproduct
\begin{align*}
\Big\| \sum_{K^1, K^2} \sum_{(I_j^1)^{(k_j)} = K^1} a_{K(I_i^1)}\ave{f_1, h_{I_1^1} \otimes h_{K^2}} \prod_{i=2}^n \Ave{f_i, h_{I_i^1} \otimes \frac{1_{K^2}}{|K^2|}}h_{I_{n+1}^1}^0 \otimes \frac{1_{K^2}}{|K^2|} w\Big\|_{L^p},
\end{align*}
which is exactly the standard one.

%\begin{rem}If one wants to avoid the sparse domination, we could use the non-isotropic square function and prove analogue of Proposition \ref{prop:5.8}. 
%\end{rem}
Following the expansion methods and estimations introduced earlier, we can handle the other forms of commutators similarly. Compared to the shift case, the more difficult challenges arise from the terms of forms, where we have
\begin{align*}
(\ave{b}_{I_1^1 \times K^2} - \ave{b}_{K})\ave{ f_1, h_{I_1^1} \otimes h_{K^2}}, \\
(\ave{b}_{I_1^1 \times K^2} - \ave{b}_{K})\Ave{ f_1, h_{I_1^1} \otimes \frac{1_{K^2}}{|K^2|}},\\
(\ave{b}_{I_1^1 \times K^2} - \ave{b}_{K})\ave{ f_1, h_{I_1^1}^0 \otimes h_{K^2}},
\end{align*}
and $$
(\ave{b}_{I_1^1 \times K^2} - \ave{b}_{K})\Ave{ f_1, h_{I_1^1}^0 \otimes \frac{1_{K^2}}{|K^2|}}.$$
We already handled the first and the symmetric case of the last one.
By modifying the above methods, we can estimate the other two terms.

%Lastly, commutators of form $$(S\pi)_k(\Pi_{3,3}(b,f_1),\ldots,f_n)- \Pi_{3,3}(b,(S\pi)_k(f_1,\ldots,f_n))$$
%and
%$$ (S\pi)_k(\Pi^
%{u}_{3}(b,f_1),\ldots,f_n)- \Pi_{3}^{v}(b,(S\pi)_k(f_1,\ldots,f_n)), \qquad u,v = 1,2$$ deviate only how the averages are added and subtracted. The estimates follow the same strategy as demonstrated above.

\subsection*{Full paraproducts}
Although the full paraproducts have the more complicated product BMO coefficients, they do not require as much analysis as the partial paraproducts. Since no unseen methods are needed to conclude the boundedness of full paraproduct commutators, we omit the details.

\section{The lower bound}\label{sec:lower}
Let $K$ be a standard bi-parameter full kernel as described earlier. In this section, we additionally assume that $K$ is a multilinear non-degenerate kernel. 
%That is, for all $y \in \R^{d} = \R^{d_1}\times \R^{d_2}, j = 1,2\ldots,n$ and $r_1,r_2 > 0$ there exists $x \in \R^d$ such that $|x^1- y^1| > r_1, |x^2 - y^2| >r_2,$ and 
%\begin{equation}
%|K(x,y,\ldots, y)| \gtrsim \frac{1}{r_1^{n d_1}} \frac{1}{r_2^{n d_2}}.
%\end{equation} 
That is, for any given rectangle $R = I^1 \times I^2$ there exists $\wt R = \wt I^1 \times \wt I^2$ such that $\ell(I^i) = \ell(\wt I^i),$ $d(I^i, \wt I^i) \sim \ell(I^i),$ and there exists some $\zeta \in \C$ with $|\zeta|=1$ such that for all $x \in \wt R$ and $y_1,\ldots, y_n \in R$ there holds  
$$
\Re \zeta K(x, y_1,\ldots, y_n) \gtrsim \frac{1}{|R|^{n}}.
$$

We are going to assume the weak type boundedness of the commutator. Suppose that 
$$
\sup_{A \subset R} \frac{1}{\prod_{i=1}^n \sigma_i (R)^{\frac{1}{p_i}}} \Big \| 1_{\wt R}[b,T]_{j} (1_R \sigma_1, \ldots, 1_A \sigma_j, \ldots, 1_R \sigma_n) \nu^{-1} w\Big\|_{L^{p,\infty}} < \infty,
$$
where recall that $\sigma_i = w_i^{-p_i'}$ and $\nu = \lambda_j^{-1} w_j.$
Clearly, this is a weaker assumption than
$$
 \Big\|[b,T]_j \colon\prod_{i=1}^n L^{p_i}(w_i^{p_i}) \to L^p( \nu^{-p}w^p)\Big\| < \infty.
$$

We do not assume the two separate $A_{\vec p}$ conditions here. It is enough to assume that we have the two tuples $(w_1,\ldots,w_n), (w_1, \ldots,\lambda_j,\ldots, w_n)$ of weights satisfying $$(w_1, \ldots, w_n, \nu w^{-1}) \in A^*_{\vec p}.$$  Let us denote $\nu^{-p} w^{p}$ by $\sigma_{n+1}.$

We employ the idea of the median method to prove that $$b \in \bmo_\nu(\sigma_j) := \{b \in L^1_{\loc}\colon\sup_{R \in \calD} \inf_{c\in \R}\frac{1}{ \nu \sigma_j(R)} \int_R |b -c|\sigma_j < \infty\}$$
under the weaker assumption above. We additionally need to assume that $\nu \sigma_j \in A_\infty$ since when $\nu,\sigma_j,\nu\sigma_j \in A_\infty$ it follows that this is equivalent with the Bloom type little BMO definition, see Proposition \ref{prop:bloomlittlebmo}. 

\begin{rem}
 We get $\nu \sigma_j \in A_\infty$ for free whenever $\lambda_j^{-p_j'} \in A_\infty$ since
 $$
 \nu \sigma_j = \lambda_j^{-1} w_j^{1-p_j'} = (\lambda_j^{-p_j'})^{\frac{1}{p_j'}}(\sigma_j)^{\frac{1}{p_j}} \in A_\infty.
 $$
\end{rem}

 Fix rectangle $R \in \calD.$ We take arbitrary $\alpha \in \R$ and $x\in \wt R \cap \{ b \ge \alpha \},$ where $\wt R$ is a rectangle that satisfies the non-degeneracy property. Thus, we have
\begin{align*}
&\frac{1}{|R|} \int_R (\alpha - b)_+ \sigma_j \prod_{\substack{i =1\\i\neq j}}^{n} \frac{\sigma_i(R)}{|R|} \\ 
&\lesssim \Re \zeta \int_{R \cap \{ b \le \alpha\}} \int_{R}\ldots\int_{R} (b(x) - b(y_j)) K(x, y_1,\ldots,y_n) \prod_{i=1}^n \sigma_i(y_i) \ud y_i.
\end{align*}

We let $\alpha$ be the median of $b$ on $\wt R,$ i.e.
$$
\min(|\wt R \cap \{b \le \alpha\}|, | \wt R \cap \{ b \ge \alpha \}|) \ge \frac{|\wt R|}{2} =  \frac{|R|}{2}.
$$
As $\sigma_{n+1}  \in A_\infty$ we have that $\sigma_{n+1} (\wt R \cap \{ b \ge \alpha \}) \sim \sigma_{n+1}(\wt R) \sim \sigma_{n+1} (R).$

Thus, we get 
\begin{equation}\label{eq:lowerEst}
\sigma_{n+1} (R)^{\frac1p} \frac{1}{|R|} \int_R (\alpha - b)_+ \sigma_j \prod_{\substack{i =1\\i\neq j}}^{n} \frac{\sigma_i(R)}{|R|} \lesssim \|C_b^K(\sigma_1,\ldots,\sigma_n) \|_{L^{p,\infty}(\sigma_{n+1})} \lesssim \prod_{i = 1}^n \sigma_i(R)^{\frac{1}{p_i}},
\end{equation}
where
\begin{align*}
&C_b^K(\sigma_1,\ldots,\sigma_n)(x) \\
&:=  1_{\wt R \cap \{b \ge \alpha\}}(x)\Re \zeta \int_{R \cap \{ b \le \alpha\}} \int_{R}\ldots\int_{R} (b(x) - b(y_j)) K(x, y_1,\ldots,y_n) \prod_{i=1}^n \sigma_i(y_i) \ud y_i.
\end{align*}

Recall that 
\begin{align*}
1 \leq \prod_{i=1}^n \ave{\sigma_i}_R^{\frac{1}{p_i'}} \ave{\sigma_{n+1}}_R^{\frac 1p} \ave{\nu}_R \leq [(w_1,\ldots,w_n,\nu w^{-1})]_{A^*_{\vec p}} < \infty.
\end{align*}
Rearranging terms in \eqref{eq:lowerEst} and using the observation, we get 
$$
\frac{1}{\ave{\nu}_R \sigma_j(R)  } \int_R (\alpha- b)_+ \sigma_j \lesssim 1.
$$
By the reverse H\"older property, we have
$$
\frac{1}{\ave{\nu}_R \sigma_j(R)  } \gtrsim \frac{1}{ \nu\sigma_j(R) }. 
$$

By symmetrical estimates, we also get 
$$
\frac{1}{\nu \sigma_j(R)  } \int_R (b - \alpha)_+ \sigma_j \lesssim 1.
$$
This completes the proof.

\section{Two-weight extrapolation}\label{sec:extrapo}
This section is devoted to proving 
%the following extrapolation theorem. 
Theorem \ref{thm:extrapo}.
%\begin{thm}\label{thm:extrapo}
%Let $(f,f_1,\cdots, f_n)$ be an $n+1$-tuple of non-negative functions. Let $1\le p_i\le \infty$, $1\le i \le n$ and $ 1/p=\sum_{i=1}^n 1/{p_i}$. Assume that for all $(w_1, \cdots, w_n), (\lambda_1, w_2,\cdots, w_n)\in A_{\vec p}$ with $w_1\lambda_1^{-1}\in A_\infty$, there holds that 
%\[
%\|f\lambda_1 \prod_{i=2}^n w_i\|_{L^p}\lesssim \prod_{i=1}^n \|f_iw_i\|_{L^{p_i}}.
%\]Then for all $(w_1, \cdots, w_n), (\lambda_1, w_2,\cdots, w_n)\in A_{\vec q}$ with $w_1\lambda_1^{-1}\in A_\infty$ and $1<q_2,\cdots q_n\le \infty$, $1/q=1/{p_1}+\sum_{i=2}^n 1/{q_i}>0$, there holds that 
%\[
%\|f\lambda_1 \prod_{i=2}^n w_i\|_{L^q}\lesssim \|f_1w_1\|_{L^{p_1}}\prod_{i=2}^n \|f_iw_i\|_{L^{q_i}}.
%\]
%\end{thm}

The strategy of the proof will be similar as in \cite{LMO} and \cite{LMMOV}. We only prove the case 
\[
q_n\neq p_n, 1<q_n\le \infty, q_i=p_i\, \text{for all $2\le i\le n-1$}.
\] 
Let us first recall the following lemma, whose proof can be found in 
 \cite{LMMOV}*{Lemma 2.14}.
 \begin{lem}
 Let $w_i^{\frac{1}{n-1+\frac 1{p_i}}}\in A_{\frac n{n-1+\frac 1{p_i}}}$, $1\le i\le n-1$. Let $\widehat w=(\prod_{i=1}^{n-1} w_i)^{\rho}\in A_{n\rho}$, where $\rho=(1+\sum_{i=1}^{n-1}\frac 1{p_i})^{-1}$. Then $(w_1,\cdots, w_n)\in A_{\vec p}$ if and only if 
 \[
 W:= w_n \widehat w^{\frac 1{p_n'}} \in A_{p_n, p}(\widehat w).
 \]
 \end{lem}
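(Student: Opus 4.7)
The plan is to reduce the iff to an algebraic identity plus a two-sided comparison of weight averages, where the nontrivial bound is supplied by the multilinear reverse H\"older (Lemma~\ref{lem:RH}) and the $A_{n\rho}$ condition on $\widehat w$.

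First, using the key identity $\tfrac{1}{p}+\tfrac{1}{p_n'}=1+\sum_{i=1}^{n-1}\tfrac{1}{p_i}=\tfrac{1}{\rho}$ together with the factorization $\widehat w^{1/\rho}=\prod_{i=1}^{n-1}w_i$, one verifies the pointwise identities $W^{p}\widehat w = w_n^{p}\widehat w^{p/\rho} = w^p$ and $W^{-p_n'}\widehat w = w_n^{-p_n'}$. Plugging these into the weighted-measure $A_{p_n,p}(\widehat w)$ characteristic one gets
\[
[W]_{A_{p_n,p}(\widehat w)}=\sup_R\Big(\tfrac{1}{\widehat w(R)}\int_R W^p\widehat w\Big)^{1/p}\Big(\tfrac{1}{\widehat w(R)}\int_R W^{-p_n'}\widehat w\Big)^{1/p_n'}=\sup_R\ave{\widehat w}_R^{-1/\rho}\ave{w^p}_R^{1/p}\ave{w_n^{-p_n'}}_R^{1/p_n'},
\]
which compared with $[\vec w]_{A_{\vec p}}=\sup_R\ave{w^p}_R^{1/p}\ave{w_n^{-p_n'}}_R^{1/p_n'}\prod_{i=1}^{n-1}\ave{w_i^{-p_i'}}_R^{1/p_i'}$ reduces the iff to the uniform two-sided estimate
\[
F_R:=\ave{\widehat w}_R^{1/\rho}\prod_{i=1}^{n-1}\ave{w_i^{-p_i'}}_R^{1/p_i'}\sim 1.
\]

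The lower bound $F_R\gtrsim 1$ needs no weight assumption: Jensen applied to $t\mapsto t^{p_i'}$ gives $\ave{w_i^{-p_i'}}_R^{1/p_i'}\ge\ave{w_i^{-1}}_R$, AM--GM gives $\ave{w_i^{-1}}_R\ge\exp(-\ave{\log w_i}_R)$, and concavity of $\log$ together with $\widehat w=\prod w_i^{\rho}$ yields $\ave{\widehat w}_R^{1/\rho}\ge\prod_{i=1}^{n-1}\exp(\ave{\log w_i}_R)$, whose product with the previous is at least $1$. For the upper bound, apply Lemma~\ref{lem:RH} to the $A_\infty$ weights $w_i^{-p_i'}$ with exponents $u_i=\rho/(p_i'(n\rho-1))$, chosen so that $\prod_{i=1}^{n-1}w_i^{-p_i'u_i}=\widehat w^{-1/(n\rho-1)}$; after raising to the power $(n\rho-1)/\rho$ this gives
\[
\prod_{i=1}^{n-1}\ave{w_i^{-p_i'}}_R^{1/p_i'}\lesssim\ave{\widehat w^{-1/(n\rho-1)}}_R^{(n\rho-1)/\rho}.
\]
Combining with the $A_{n\rho}$ condition $\ave{\widehat w}_R\ave{\widehat w^{-1/(n\rho-1)}}_R^{n\rho-1}\lesssim 1$, raised to the power $1/\rho$, closes $F_R\lesssim [\widehat w]_{A_{n\rho}}^{1/\rho}$.

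The main obstacle is the exponent bookkeeping: the multilinear reverse H\"older output must be exactly $\widehat w^{-1/(n\rho-1)}$ to match the dual appearing in $\widehat w\in A_{n\rho}$, which forces the specific choice $u_i=\rho/(p_i'(n\rho-1))$ and relies on the identity $\sum_{i=1}^{n-1}1/p_i'=n-1/\rho$ built into $\rho$. The individual hypotheses $w_i^{1/(n-1+1/p_i)}\in A_{n/(n-1+1/p_i)}$ (equivalently $w_i^{-p_i'}\in A_{np_i'}$) enter only through $w_i^{-p_i'}\in A_\infty$, which is exactly what Lemma~\ref{lem:RH} requires, so no further ingredient beyond Jensen, AM--GM, Lemma~\ref{lem:RH} and the definition of $A_{n\rho}$ is used.
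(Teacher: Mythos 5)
The paper does not actually prove this lemma: it is quoted verbatim with a pointer to \cite{LMMOV}*{Lemma 2.14}, so there is no internal proof to compare against. Your argument is a correct self-contained verification, and it is consistent with how the class $A_{p_n,p}(\widehat w)$ is used later in Section \ref{sec:extrapo} (your pointwise identities $W^p\widehat w=w^p$ and $W^{-p_n'}\widehat w=w_n^{-p_n'}$, which follow from $\frac1p+\frac1{p_n'}=\frac1\rho$ and $\widehat w^{1/\rho}=\prod_{i=1}^{n-1}w_i$, are exactly the manipulations appearing in the paper's Case 2 computation for $v_n\widehat w^{1/p_n'}$). The factorization of the $A_{\vec p}$ characteristic at each rectangle as the $A_{p_n,p}(\widehat w)$ quantity times $F_R=\ave{\widehat w}_R^{1/\rho}\prod_{i=1}^{n-1}\ave{w_i^{-p_i'}}_R^{1/p_i'}$ is right, the Jensen/AM--GM lower bound $F_R\ge 1$ needs no hypotheses, and the upper bound via Lemma \ref{lem:RH} with $u_i=\rho/(p_i'(n\rho-1))$ combined with $\widehat w\in A_{n\rho}$ gives $F_R\lesssim[\widehat w]_{A_{n\rho}}^{1/\rho}$; since $F_R$ is bounded above and below, both implications of the equivalence follow at once. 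You also correctly identify that $w_i^{1/(n-1+1/p_i)}\in A_{n/(n-1+1/p_i)}$ is the same as $w_i^{-p_i'}\in A_{np_i'}$, which supplies the $A_\infty$ input for Lemma \ref{lem:RH}.

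One caveat worth flagging: your computation tacitly assumes $p_i<\infty$ is not needed (indeed $p_i=\infty$ causes no trouble, as then $p_i'=1$), but the endpoint $p_i=1$ does. There $p_i'=\infty$, the factor $\ave{w_i^{-p_i'}}_R^{1/p_i'}$ must be read as $\esssup_R w_i^{-1}$, and the reverse--H\"older step with $u_i=\rho/(p_i'(n\rho-1))$ degenerates; in the extreme case where all $p_i=1$ for $i\le n-1$ one even has $n\rho=1$, so $\widehat w^{-1/(n\rho-1)}$ is meaningless and the $A_1$-type interpretation is required. Since the surrounding extrapolation theorem allows $p_i=1$, the cited \cite{LMMOV}*{Lemma 2.14} covers these interpretations, whereas your argument as written establishes the lemma in the range $n\rho>1$ with the finite-exponent conventions. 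This is a technical limitation at the endpoints rather than a gap in the main argument.
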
 
 Note that it is also recorded in \cite{LMMOV}*{Lemma 2.14} that if $(w_1,\cdots, w_n)\in A_{\vec p}$, then we always have 
 \[
 \widehat w=(\prod_{i=1}^{n-1} w_i)^{\rho}\in A_{n\rho},\qquad w_i^{\frac{1}{n-1+\frac 1{p_i}}}\in A_{\frac n{n-1+\frac 1{p_i}}}, \quad i=1,\cdots, n-1.
 \]
With this at hand, since we have 
\[
(w_1, w_2,\cdots, w_n)\in A_{(p_1, \cdots, p_{n-1}, q_n)}, \quad (\lambda_1, w_2,\cdots, w_n)\in A_{(p_1, \cdots, p_{n-1}, q_n)},
\]
recalling that
\[
\frac 1q=\frac 1{p_1}+\cdots +\frac 1{p_{n-1}}+\frac 1{q_n},
\]we have 
\[
\widehat w=(\prod_{i=1}^{n-1} w_i)^{\rho}\in A_{n\rho},\quad \widehat \lambda=(\lambda_1\prod_{i=2}^{n-1} w_i)^{\rho}\in A_{n\rho}
\]and 
\[
W_w= w_n \widehat w^{\frac 1{q_n'}} \in A_{q_n, q}(\widehat w),\quad W_\lambda= w_n \widehat \lambda^{\frac 1{q_n'}} \in A_{q_n, q}(\widehat \lambda).
\]
Then the goal is to prove 
\[
\|f \lambda_1 w_1^{-1} W_w\|_{L^q(\widehat w)}\lesssim \|f_n \widehat w^{-1} W_w\|_{L^{q_n}(\widehat w)}\prod_{i=1}^{n-1} \|f_iw_i\|_{L^{p_i}},
\]
which can also be written as
\[
\|fW_\lambda\|_{L^q(\widehat \lambda)}\lesssim \|f_n \widehat {\lambda}^{-1} W_\lambda\|_{L^{q_n}(\widehat \lambda)}\prod_{i=1}^{n-1} \|f_iw_i\|_{L^{p_i}}.
\]
We split the proof to the following cases:

\textbf{Case 1: $1/s:=1/q-1/p=1/{q_n}-1/{p_n}>0$}. Without loss of generality we may assume 
\[
0<\|f_nw_n\|_{L^{q_n}}=\|f_n \widehat w^{-1} W_w\|_{L^{q_n}(\widehat w)}= \|f_n \widehat {\lambda}^{-1} W_\lambda\|_{L^{q_n}(\widehat \lambda)}<\infty.
\]
Let 
\[
h= \frac{f_n w_n^{q_n'}}{\| f_nw_n\|_{L^{q_n}}}=\frac{ f_n \widehat w^{-1} W_w^{q_n'} }{\|f_n \widehat w^{-1} W_w\|_{L^{q_n}(\widehat w)}}=  \frac{ f_n \widehat \lambda^{-1} W_\lambda^{q_n'} }{\|f_n \widehat \lambda^{-1} W_\lambda\|_{L^{q_n}(\widehat \lambda)}},
\]so that we have $\|h\|_{L^{q_n}(w_n^{-q_n'})}=1$.
Define
\[
\mathcal R' h= \sum_{k=0}^\infty \frac{(M_{\widehat \lambda}'M_{\widehat w}')^{(k)}h}{2^k \| M_{\widehat \lambda}'M_{\widehat w}'\|_{L^{(1+\frac{q_n'}{q})' }(w_n^{-q_n'})}^k}=: \sum_{k=0}^\infty \frac{(M_{\widehat \lambda}'M_{\widehat w}')^{(k)}h}{2^k \| M_{\widehat \lambda}'M_{\widehat w}'\|^k},
\]
where 
\[
M_{\widehat w}' g= M_{\widehat w}(g W_w^{-q_n'})W_w^{q_n'},\qquad M_{\widehat \lambda}' g= M_{\widehat \lambda}(g W_\lambda^{-q_n'})W_\lambda^{q_n'}.
\]Let us explain why $\mathcal R'$ is well-defined. Indeed,
since $W_w\in A_{q_n, q}(\widehat w)$, we have $W_w^{-q_n'}\in A_{1+\frac{q_n'}{q}}(\widehat w)$ and $M_{\widehat w}'$ is bounded on $L^{(1+\frac{q_n'}{q})' }(W_w^{-q_n'} \widehat w)=L^{(1+\frac{q_n'}{q})' }(w_n^{-q_n'})$ (see \cite{LMV:gen}*{Lemma 8.2}). Likewise $M_{\widehat \lambda}'$ is bounded on $L^{(1+\frac{q_n'}{q})' }(w_n^{-q_n'})$. 
Now set 
\[
H=\mathcal R'(h^{\frac{q_n}{(1+\frac{q_n'}{q})'}})^{\frac{(1+\frac{q_n'}{q})'}{q_n}}.
\] Then the above discussion easily yields 
\begin{align*}
h\le H, \quad \| H\|_{L^{q_n}(w_n^{-q_n'})}\lesssim \| h\|_{L^{q_n}(w_n^{-q_n'})}=1
\end{align*}
and 
\begin{align*}
M_{\widehat w}' \Big( H^{\frac{q_n}{(1+\frac{q_n'}{q})'}}\Big)&\le M_{\widehat \lambda}'M_{\widehat w}'\Big( H^{\frac{q_n}{(1+\frac{q_n'}{q})'}}\Big)\le 2\| M_{\widehat \lambda}'M_{\widehat w}'\| H^{\frac{q_n}{(1+\frac{q_n'}{q})'}}\\
M_{\widehat \lambda}' \Big( H^{\frac{q_n}{(1+\frac{q_n'}{q})'}}\Big)&\le M_{\widehat \lambda}'M_{\widehat w}'\Big( H^{\frac{q_n}{(1+\frac{q_n'}{q})'}}\Big)\le 2\| M_{\widehat \lambda}'M_{\widehat w}'\| H^{\frac{q_n}{(1+\frac{q_n'}{q})'}},
\end{align*}which give that
\[
[H^{\frac{q_n}{(1+\frac{q_n'}{q})'}} W_w^{-q_n'}]_{A_1(\widehat w)}\le 2\| M_{\widehat \lambda}'M_{\widehat w}'\| \quad \text{and}\quad  [H^{\frac{q_n}{(1+\frac{q_n'}{q})'}} W_\lambda^{-q_n'}]_{A_1(\widehat \lambda)}\le 2\| M_{\widehat \lambda}'M_{\widehat w}'\| .
\]
Finally, set 
$
v_n
=  H^{-\frac {q_n}s}w_n^{1+\frac{q_n'}s}.
$
It remains to check 
\begin{equation}\label{eq:extra-11}
(w_1, \cdots, w_{n-1}, v_n), (\lambda_1, \cdots, w_{n-1}, v_n)\in A_{\vec p}.
\end{equation}
Equivalently, we check 
\[
v_n \widehat w^{\frac 1{p_n'}}\in A_{p_n, p}(\widehat w)\quad \text{and}\quad v_n \widehat \lambda^{\frac 1{p_n'}}\in A_{p_n, p}(\widehat \lambda),
\]which will be completely similar as that in \cite{LMMOV}*{p. 106}. Indeed, once we have \eqref{eq:extra-11}, then 
\begin{align*}
\|fW_\lambda\|_{L^q(\widehat \lambda)}&=\|f v_n \widehat \lambda^{\frac 1q+\frac 1{q_n'}}   H^{\frac {q_n}s}w_n^{-\frac{q_n'}s}\|_{L^q}\le \|f v_n \widehat \lambda^{\frac 1q+\frac 1{q_n'}} \|_{L^p} \| H^{\frac {q_n}s}w_n^{-\frac{q_n'}s}\|_{L^s}\\
&\lesssim \|f v_n \widehat \lambda^{\frac 1q+\frac 1{q_n'}} \|_{L^p} =\|f v_n \widehat \lambda^{\frac 1p+\frac 1{p_n'}} \|_{L^p}\lesssim \| f_n v_n\|_{L^{p_n}} \prod_{i=1}^{n-1} \|f_i w_i\|_{L^{p_i}}.
\end{align*}The proof is completed by noticing that 
\begin{align*}
\| f_n v_n\|_{L^{p_n}} &=\big\| h w_n^{-q_n'} \|f_n w_n\|_{L^{q_n}} v_n\big\|_{L^{p_n}} \le \|f_n w_n\|_{L^{q_n}}  \| H^{1-\frac {q_n}s} w_n^{-q_n'+1+\frac{q_n'}s}\|_{L^{p_n}}\\
&= \|f_n w_n\|_{L^{q_n}} \| H^{\frac {q_n}{p_n}} w_n^{-\frac{q_n'}{p_n}}\|_{L^{p_n}}\lesssim \|f_n w_n\|_{L^{q_n}}.
\end{align*}

\textbf{Case 2: $1/s:=1/p-1/q=1/{p_n}-1/{q_n}>0$}.  Note that this case allows $q_n=\infty$. As observed in the above, $W_w^{-q_n'}\in A_{1+\frac{q_n'}{q}}(\widehat w)$ and thus $M_{\widehat w}$ is bounded on $L^{1+\frac{q_n'}{q} }(W_w^{-q_n'} \widehat w)=L^{1+\frac{q_n'}{q} }(w_n^{-q_n'})$. Likewise, $M_{\widehat \lambda}$ is bounded on $L^{1+\frac{q_n'}{q} }(w_n^{-q_n'})$. Denote by $\|M_{\widehat \lambda} M_{\widehat w}\|$ the norm of $M_{\widehat \lambda} M_{\widehat w}$ on $L^{1+\frac{q_n'}{q} }(w_n^{-q_n'})$. We introduce the following Rubio de Francia algorithm:
\[
\mathcal R g=\sum_{k=0}^\infty \frac{(M_{\widehat \lambda}M_{\widehat w})^{(k)}g}{2^k \| M_{\widehat \lambda}M_{\widehat w}\|^k}.
\]
By duality, there exists some $0\le h \in L^{\frac sp}( W_\lambda^q \widehat \lambda )$ such that $\|h\|_{L^{\frac sp}( W_\lambda^q \widehat \lambda )}=1$ and 
\[
\|fW_\lambda\|_{L^q(\widehat \lambda)}=\| f^p  \|_{L^{\frac qp}(W_\lambda^q \widehat \lambda )}^{\frac 1p}=\Big(\int f^p h W_\lambda^q \widehat \lambda\Big)^{\frac 1p}.
\]
Set 
\[
H=\mathcal R\Big (h^{\frac s{p(1+\frac{q_n'}q)}} w_n^{\frac{q_n'}{1+\frac{q_n'}q}} (W_\lambda^q \widehat \lambda)^{\frac 1{1+\frac{q_n'}q}} \Big)^{\frac{p(1+\frac{q_n'}q)}{s}} w_n^{-\frac{q_n'p}s}(W_\lambda^q \widehat \lambda)^{-\frac ps}.
\]
Then it is easy to check that 
\[
h\le H, \quad \| H\|_{L^{\frac sp}( W_\lambda^q \widehat \lambda )}\lesssim \| h\|_{L^{\frac sp}( W_\lambda^q \widehat \lambda )}=1
\]and 
\begin{align*}
\Big[w_n^{\frac{q_n'}{1+\frac{q_n'}{q}}} (W_\lambda^q \widehat \lambda)^{\frac 1{1+\frac{q_n'}q}} H^{\frac s{p(1+\frac{q_n'}q)}}\Big]_{A_1(\widehat w)}&\le 2\|M_{\widehat \lambda} M_{\widehat w}\|;\\
\Big[w_n^{\frac{q_n'}{1+\frac{q_n'}{q}}} (W_\lambda^q \widehat \lambda)^{\frac 1{1+\frac{q_n'}q}} H^{\frac s{p(1+\frac{q_n'}q)}}\Big]_{A_1(\widehat \lambda)}&\le 2\|M_{\widehat \lambda} M_{\widehat w}\|.
\end{align*}
Denote $v_n=H^{\frac 1p}W_\lambda^{\frac qp}\widehat \lambda^{-\frac 1{p_n'}}$,  we claim 
\begin{equation}\label{eq:extra-22}
(w_1, \cdots, w_{n-1}, v_n), (\lambda_1, \cdots, w_{n-1}, v_n)\in A_{\vec p}.
\end{equation}Assume \eqref{eq:extra-22} for the moment, then
\begin{align*}
\|fW_\lambda\|_{L^q(\widehat \lambda)}&= \Big(\int f^p h W_\lambda^q \widehat \lambda\Big)^{\frac 1p}\le \| f v_n \widehat \lambda^{\frac 1{p_n'}}\|_{L^p(\widehat\lambda)}\lesssim \|f_n v_n\|_{L^{p_n}} \prod_{i=1}^{n-1} \|f_i w_i\|_{L^{p_i}}.
\end{align*}
We can conclude this case by noticing that 
\begin{align*}
\|f_n v_n\|_{L^{p_n}}&\le \|f_n w_n\|_{L^{q_n}}\|v_n w_n^{-1}\|_{L^s}=\|f_n w_n\|_{L^{q_n}} \| H^{\frac 1p}W_\lambda^{\frac qp}\widehat \lambda^{-\frac 1{p_n'}}w_n^{-1}\|_{L^s}\\
&=\|f_n w_n\|_{L^{q_n}} \| H^{\frac 1p}W_\lambda^{\frac qs}\widehat \lambda^{\frac 1s} \|_{L^s}\lesssim \|f_n w_n\|_{L^{q_n}}.
\end{align*}
It remains to prove \eqref{eq:extra-22}. Similar as before, it suffices to prove 
\[
v_n \widehat w^{\frac 1{p_n'}}\in A_{p_n, p}(\widehat w)\quad \text{and}\quad v_n \widehat \lambda^{\frac 1{p_n'}}\in A_{p_n, p}(\widehat \lambda).
\]
Since 
\[
1-\frac {p(1+\frac{q_n'}q)}s=\frac{pq_n'}{qp_n'},
\]for arbitrary rectangle $Q$, direct calculus gives us 
\begin{align*}
&\Big(\frac 1{\widehat w(Q)}\int_Q v_n^p \widehat w^{\frac p{p_n'}}\widehat w\Big)^{\frac 1p}
=\Big(\frac 1{\widehat w(Q)}\int_Q H W_\lambda^q \widehat \lambda^{-\frac p{p_n'}} \widehat w^{\frac p{p_n'}+1}\Big)^{\frac 1p}\\
&= \Big(\frac 1{\widehat w(Q)}\int_Q H ( W_\lambda^q \widehat \lambda)^{\frac ps}w_n^{\frac {pq_n'}s}W_\lambda^{\frac {pq_n'}{p_n'}} \widehat \lambda^{-\frac p{p_n'}} \widehat w^{\frac p{p_n'}+1}\Big)^{\frac 1p}\\
&\le \Big(\frac 1{\widehat w(Q)}\int_Q w_n^{\frac{q_n'}{1+\frac{q_n'}{q}}} (W_\lambda^q \widehat \lambda)^{\frac 1{1+\frac{q_n'}q}} H^{\frac s{p(1+\frac{q_n'}q)}}\widehat w\Big)^{\frac {1+\frac{q_n'}q}s} \Big(\frac 1{\widehat w(Q)}\int_Q W_\lambda^{q} \widehat \lambda^{-\frac q{q_n'}} \widehat w^{\frac q{q_n'}+1}\Big)^{\frac {q_n'}{qp_n'}}\\
&\lesssim  \inf_Q  \big(w_n^{\frac {q_n'}s} (W_\lambda^q \widehat \lambda)^{\frac 1s}H^{\frac 1p}\big)\Big(\frac 1{\widehat w(Q)}\int_Q W_w^q \widehat w\Big)^{\frac {q_n'}{qp_n'}}.
\end{align*}
Thus 
\begin{align*}
&\Big(\frac 1{\widehat w(Q)}\int_Q v_n^p \widehat w^{\frac p{p_n'}}\widehat w\Big)^{\frac {q_n'}{qp_n'}}\Big(\frac 1{\widehat w(Q)}\int_Q v_n^{-p_n'}\Big)^{\frac 1{p_n'}}\\
&\lesssim \Big(\frac 1{\widehat w(Q)}\int_Q W_w^q \widehat w\Big)^{\frac {q_n'}{qp_n'}}\Big(\frac 1{\widehat w(Q)}\int_Q w_n^{\frac {p_n'q_n'}s} (W_\lambda^q \widehat \lambda)^{\frac {p_n'}s}H^{\frac {p_n'}p}v_n^{-p_n'}\Big)^{\frac 1{p_n'}}\\
&=\Big(\frac 1{\widehat w(Q)}\int_Q W_w^q \widehat w\Big)^{\frac {q_n'}{qp_n'}}\Big(\frac 1{\widehat w(Q)}\int_Q  w_n^{-q_n'}\Big)^{\frac 1{p_n'}}\le [W_w]_{A_{q_n,q}(\widehat w)}^{\frac {q_n'}{p_n'}}
\end{align*}This proves $v_n \widehat w^{\frac 1{p_n'}}\in A_{p_n, p}(\widehat w)$. The proof of $v_n \widehat \lambda^{\frac 1{p_n'}}\in A_{p_n, p}(\widehat \lambda)$ is similar. 
\bibliography{references}

\end{document}